\documentclass[moor]{informs4post}
\usepackage{eqndefns-left} 
\RequirePackage{tgtermes}
\RequirePackage{newtxtext}
\RequirePackage{newtxmath}

\RequirePackage{bm}
\RequirePackage{endnotes}

\usepackage{setspace}

\SingleSpacedXI

\usepackage[sort&compress]{natbib}
 \bibpunct[, ]{[}{]}{,}{n}{}{,}%
 \def\bibfont{\small}%
\EquationsNumberedBySection 

\TheoremsNumberedBySection  
\ECRepeatTheorems  %

\MANUSCRIPTNO{}

\usepackage[english]{babel}
\usepackage[utf8]{inputenc} 
\usepackage[T1]{fontenc}    
\usepackage{url}            
\usepackage{booktabs}       
\usepackage{amsfonts}       
\usepackage{nicefrac}       
\usepackage{microtype}      
\usepackage[margin=1in]{geometry}
\usepackage[ruled,linesnumbered]{algorithm2e}
\usepackage{amssymb}
\usepackage{amsfonts}
\usepackage{amsmath}

\usepackage{epsfig}
\usepackage{subcaption}
\DeclareCaptionFont{singlespacing}{}

\theoremstyle{EX}
\newtheorem{mainresult}{Main Result}
\theoremstyle{TH}

\usepackage{adjustbox}
\PassOptionsToPackage{hypertexnames=false}{hyperref}
\usepackage{pdfcomment}
\usepackage{todonotes}
\usepackage{array}
\usepackage{wrapfig}
\usepackage{enumitem}
\usepackage{multirow}
\usepackage{diagbox}

\newcommand{\dist}{\operatorname{Dist}}

\newcommand{\gap}{\operatorname{Gap}}

\newcommand{\ep}{\operatorname{\textsf{E}}}

\newcommand{\econs}{\operatorname{E_{cons}}}
\newcommand{\egap}{\operatorname{E_{gap}}}

\newcommand{\condG}{\Gamma}
\newcommand{\condC}{\mathcal{C}}

\newcommand{\width}{\operatorname{Width}}

\usepackage{xcolor}

\newcommand{\eps}{\varepsilon}

\newcommand{\Diam}{\mathrm{Diam}}

\newcommand{\calF}{{\cal F}}

\newcommand{\calS}{{\cal S}}

\newcommand{\calW}{{\cal W}}
\newcommand{\calX}{{\cal X}}
\newcommand{\calY}{{\cal Y}}
\newcommand{\calZ}{{\cal Z}}

\newcommand{\x}{{\boldsymbol x}}

\begin{document}


\RUNAUTHOR{Xiong and Freund}

\RUNTITLE{Level-Set Geometry and the Theoretical Performance of PDHG for Conic Linear Optimization}

\TITLE{Level-Set Geometry and the Theoretical Performance of PDHG for Conic Linear Optimization}

\ARTICLEAUTHORS{%
\AUTHOR{Zikai Xiong}
\AFF{Department of Industrial Engineering and Management Sciences, Northwestern University, 2145 Sheridan Road, Evanston, IL 60208, USA. \href{mailto:zikai.xiong@northwestern.edu}{zikai.xiong@northwestern.edu}.}
\AUTHOR{Robert M. Freund}
\AFF{MIT Sloan School of Management, 77 Massachusetts Avenue, Cambridge, MA 02139, USA. \href{mailto:rfreund@mit.edu}{rfreund@mit.edu}.}
} 

\ABSTRACT{We consider solving  (convex) conic linear optimization problems, at the scale where matrix-factorization-free methods are attractive or necessary. The restarted primal-dual hybrid gradient method (rPDHG) -- with heuristic enhancements and GPU implementation -- has been very successful in solving huge-scale linear optimization problems (LPs). However, its application to more general conic convex optimization problems is not so well-studied. We analyze the theoretical performance of rPDHG for general (convex) conic linear optimization, and LP as a special case thereof. We show a relationship between the geometry of the primal-dual $\delta$-(sub-)level sets $\mathcal{W}_\delta$ and the convergence rate of rPDHG. Specifically, we prove a bound on the convergence rate of rPDHG that improves when there is a primal-dual (sub-)level set $\mathcal{W}_\delta$ for which (i) $\mathcal{W}_\delta$ is close to the optimal solution set in Hausdorff distance, and (ii) the ratio of the diameter to the ``conic radius'' of $\mathcal{W}_\delta$ is small. And in the special case of LP, the performance of rPDHG is bounded only by this ratio applied to the (sub-)level set corresponding to the best non-optimal extreme point. Depending on the problem instance, this ratio can take on extreme values and can result in excellent or poor performance of rPDHG both in theory and in practice.
}%



\KEYWORDS{conic optimization, linear optimization, condition measures, first-order methods, convergence guarantees}
\MSCCLASS{90C05, 90C06, 90C25, 90C47}

\maketitle

\section{Introduction, and preview of main results}\label{sec:intro}

In this paper, we focus on the following general conic linear optimization problem (CLP):
\Equationvalidatefalse
\begin{equation}\tag{P}\label{pro: general primal clp}
	\min_{x\in\mathbb{R}^n}  \ c^\top x \quad 	\text{s.t.} \ Ax = b, \  x \in K_p \ ,\end{equation}
\Equationvalidatetrue where $K_p \subseteq \mathbb{R}^n$ is a closed convex cone, $A\in \mathbb{R}^{m\times n}$ is the constraint matrix,  $b \in \mathbb{R}^m$ is the right-hand side vector, and $c \in \mathbb{R}^n$ is the objective vector.
The family of CLPs has emerged since the 1990s as a fundamental problem class in convex optimization.  CLP includes standard linear optimization problems (LPs), second-order cone optimization problems (SOCPs), semidefinite optimization problems (SDPs), and exponential cone optimization problems as important subclasses. LPs are those instances of CLP for which $K_p$ is the nonnegative orthant $\mathbb{R}^n_+$; LP arises in application domains as varied as manufacturing \cite{bowman1956production,hanssmann1960linear}, transportation \cite{charnes1954stepping}, economics \cite{greene2003econometric}, computer science \cite{cormen2022introduction}, and medicine \cite{wagner2004large} among many others \cite{dantzig2002linear}. SOCPs are another subclass of CLP where $K_p$ is a cross-product of second-order cones $\mathbb{K}^{d+1}_{\textsc{soc}}$. They have significant applications in finance \cite{levy1970international,markowitz1950theories}, statistics \cite{tibshirani1996regression}, and others \cite{lobo1998applications}. The broad class of semidefinite optimization problems (SDPs), where $K_p$ is a cross-product of semidefinite cones $\mathbb{S}^{d \times d}_+$, has also received significant attention, though more for its overarching breadth of potential applications than for practical industrial use \cite{blekherman2012semidefinite,wolkowicz2012handbook,alizadeh1995interior}. Other CLP instances include exponential cone optimization problems, which model formulations involving exponential, logarithmic, and entropy-type functions \cite{eisenberg1959consensus,rujeerapaiboon2016robust}.
 
Algorithms for small and medium-size CLP instances have been extensively researched both theoretically and computationally.  Classic algorithms such as simplex/pivoting methods and interior-point methods (IPMs) form the foundation of modern optimization solvers and have had a profound impact on optimization quite broadly.  However, their success is premised on being able to repeatedly solve linear equation systems to high accuracy at each iteration, whose operations grow superlinearly with respect to the size of the data (measured with the dimensions $m$ and/or $n$ of the operators or the number of nonzero entries $\textsf{nnz}$ in the data $A$, $b$, $c$). This renders the methods impractical for some huge-scale instances. Furthermore, the associated matrix factorizations are not well suited to parallel or distributed computation. In contrast, first-order methods (FOMs) are emerging as an alternative approach for solving large-scale CLPs, since they require no or only very few matrix factorizations. Instead, the primary computational cost of FOMs lies in computing matrix-vector products that are needed to compute (or estimate) gradients. As such, FOMs are inherently more suitable for exploiting data sparsity, and furthermore are well suited for parallel and/or distributed computer architecture and modern graphics processing units (GPUs).

Among FOMs, PDHG and closely related primal-dual methods have become especially prominent. For LP, the restarted primal-dual hybrid gradient method (rPDHG) \cite{applegate2023faster} underlies the solver PDLP \cite{applegate2021practical}, admits infeasibility detection \cite{applegate2024infeasibility}, has benefited from recent GPU implementations \cite{lu2023cupdlp,lu2023cupdlp-c}, and has been embedded in commercial solvers as a base algorithm for LP \cite{coptgithub,xpressnwes}. Similar matrix-factorization-free methods have also appeared beyond LP, including PDHG-type methods for convex quadratic programming \cite{lu2023practicalqp,huang2024restarted} and the recent rPDHG-based solver for CLP, including second-order cone and exponential cone optimization problems \cite{lin2025pdcs}. It is also known that PDHG is equivalent to linearized ADMM or preconditioned ADMM under suitable formulations \cite{liu2021acceleration}, and ADMM-based methods have been widely used in practice for solving general CLP instances \cite{o2016conic,o2021operator}. These developments motivate studying PDHG-type methods from a viewpoint that is not restricted to LP.

\begin{wrapfigure}{hr}{0.35\textwidth}
	\centering
	\includegraphics[width=1\linewidth]{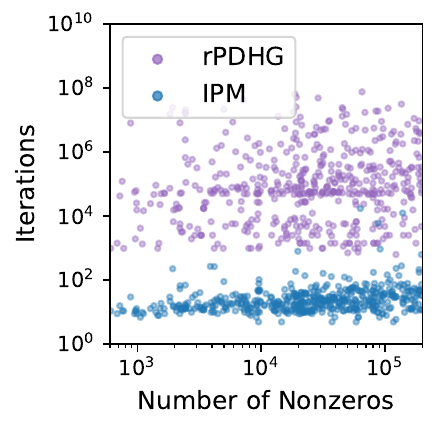}
	\caption{\small The number of iterations of rPDHG and IPM for solving LP instances with different numbers of nonzeros in the constraint matrix.}\label{fig:number_of_iteration_count}
\end{wrapfigure}

Despite the promising performance of rPDHG on many large-scale instances, the method can also perform poorly on certain instances -- even very small ones. Figure \ref{fig:number_of_iteration_count} illustrates this phenomenon on a set of LP examples: it shows the distribution of the number of iterations required by a standard rPDHG implementation and a standard IPM implementation for a set of LP instances taken from the MIPLIB 2017 dataset \cite{gleixner2021miplib}. While rPDHG, as a first-order method, enjoys a much lower per-iteration cost than IPMs, its iteration count can vary by orders of magnitude across instances, even when the instances have similar size as measured by the number of nonzeros $\textsf{nnz}$ and even when the instances are quite small. This begs the question of what instance-specific conditions cause some instances to be more difficult to solve?  The identification of these condition measures can result in new insights into the performance of rPDHG, and potentially lead to new algorithmic enhancements. Answers to this question lead to the study of traditional as well as novel condition measures for the general family of CLPs as well as more specifically to LP instances.

There has been some recent research focused on different condition measures to analyze the complexity of rPDHG on LP. \cite{applegate2023faster} shows that the linear convergence of rPDHG relies on the sharpness of a ``normalized duality gap'' and bounds this sharpness using a global Hoffman constant of the KKT system. However, the Hoffman constant is usually overly conservative, and is difficult to analyze, compute, or heuristically improve. \cite{xiong2023computational} further connects the sharpness constant to two natural and intuitive condition measures of LP, namely the ``limiting error ratio''  and LP sharpness.  However, both of these analyses ultimately rely on the positivity of the sharpness constant. This creates a limitation: for many CLP instances the relevant sharpness may be zero (which causes the theory to break down).  Furthermore, even in the case of LP instances (where the sharpness must be positive) the sharpness can be exponentially close to zero. In such cases, the practical performance of rPDHG can be much better than what sharpness-based theory would suggest. This discrepancy between theory and practical performance points to the need to find new condition measures (beyond sharpness) for which the theory can yield a better understanding of the performance of rPDHG. It also motivates the need for a unified geometric framework that can explain both small-sharpness LP instances and more general CLP instances where sharpness may fail entirely.

Based on the above discussion, this paper seeks to address the following questions: what are the condition measures  that impact the performance of rPDHG on general CLP instances (in theory and practice), and how do these condition measures enter the complexity
bounds for rPDHG?  

\subsection{Symmetric subspace form}\label{sec:intro_subspace}

Instead of directly studying the (primal) problem \eqref{pro: general primal clp}, we appeal to the primal--dual pair of problems written in the spaces of their cone variables. 
In concept, a primal--dual pair of conic problems can be written in the symmetric \textit{subspace form}
\Equationvalidatefalse
\begin{equation}\tag{P$_{\mathrm{sub}}$}\label{pro:preview_primal_subspace}
	\min_{x\in\mathbb{R}^n} \ \hat{s}^{\top}x
	\quad \text{s.t.} \quad x\in V_p := \hat{x}+\mathcal{L},\quad x\in K_p,
\end{equation}
\Equationvalidatetrue%
\Equationvalidatefalse
\begin{equation}\tag{D$_{\mathrm{sub}}$}\label{pro:preview_dual_subspace}
	\max_{s\in\mathbb{R}^n} \ -\hat{x}^{\top}s
	\quad \text{s.t.} \quad s\in V_d := \hat{s}+\mathcal{L}^{\bot},\quad s\in K_d,
\end{equation}
\Equationvalidatetrue%
where $\mathcal{L}$ is a linear subspace, $\mathcal{L}^{\bot}$ is its orthogonal complement, $K_d:=K_p^*$ is the dual cone of $K_p$, and $\hat x$, $\hat s$ are chosen so that $\hat{x}\in\mathcal{L}^{\bot}$ and $\hat{s}\in\mathcal{L}$.  We write $V_p:=\hat{x}+\mathcal{L}$ and $V_d:=\hat{s}+\mathcal{L}^{\bot}$ for the primal and dual affine spaces, whose associated linear subspaces are $\mathcal{L}$ and $\mathcal{L}^{\bot}$, respectively. Replacing $\hat{x}$ by another point in $V_p$ or replacing $\hat{s}$ by another point in $V_d$ preserves the corresponding affine spaces, and after accounting for the resulting constant shifts in the objective functions it preserves the feasible sets and optimal solution sets.  We opt for choosing $\hat x$, $\hat s$ so that $\hat{x}\in\mathcal{L}^{\bot}$ and $\hat{s}\in\mathcal{L}$ because this makes the form symmetric in the primal and dual format.

For the data representation \eqref{pro: general primal clp}, the subspace in \eqref{pro:preview_primal_subspace} is $\mathcal{L}=\operatorname{Null}(A)$, $\hat{x}$ can be chosen as $\hat x = A^\top(AA^\top)^\dag b$ (where $\cdot ^\dag$ denotes the Moore-Penrose inverse), and $\hat{s}$ can be chosen as $\hat s = P_{\mathcal{L}}(c)$ (where $P_{\mathcal{L}}(\cdot)$ denotes projection onto $\mathcal L$). Equivalently, $V_p=A^\top(AA^\top)^\dag b+\mathcal{L}$ and $V_d=P_{\mathcal{L}}(c)+\mathcal{L}^{\bot}$.  In Section~\ref{sec:get_general_clp_dual} we present the full derivation from the standard form primal and dual problems.  This subspace viewpoint is classical in conic optimization; see, for example, \cite{nesterov1994interior,renegar2001mathematical,ben2001lectures,nemirovski2005cone}. 

Let $w:=(x,s)$ denote the primal--dual cone variables.  In the subspace form, define
\begin{equation}\label{def_p_d_feasibleset}
	\calF:=V\cap K \ , \ \mathrm{where} \ \ 
	V:=V_p\times V_d \ \ \mathrm{and} \ \ 
	K:=K_p\times K_d \ .
\end{equation}
Thus $V$ is the primal--dual affine subspace, $K$ is the primal--dual cone,  and $\calF$ is the set of primal--dual feasible cone variables.  Figure~\ref{fig:sublevel_geometry} conceptually illustrates these three objects for an LP, where the cone is the nonnegative orthant.

\begin{figure}[t]
	\begin{subfigure}{0.32\textwidth}
		\includegraphics[width=\linewidth]{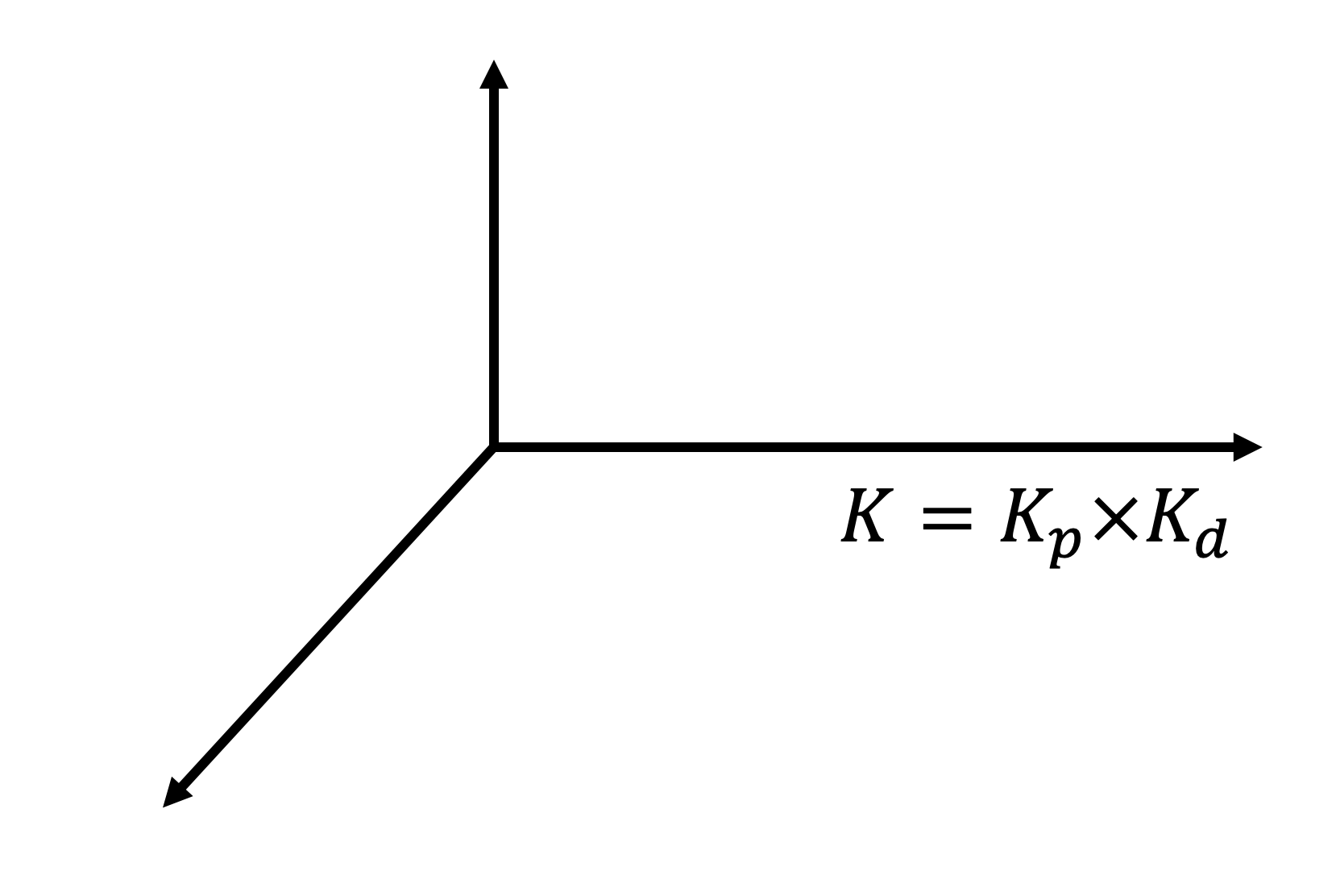}
		\caption{}\label{fig:pdcone}
	\end{subfigure}
	\hfill
	\begin{subfigure}{0.32\textwidth}
		\includegraphics[width=\linewidth]{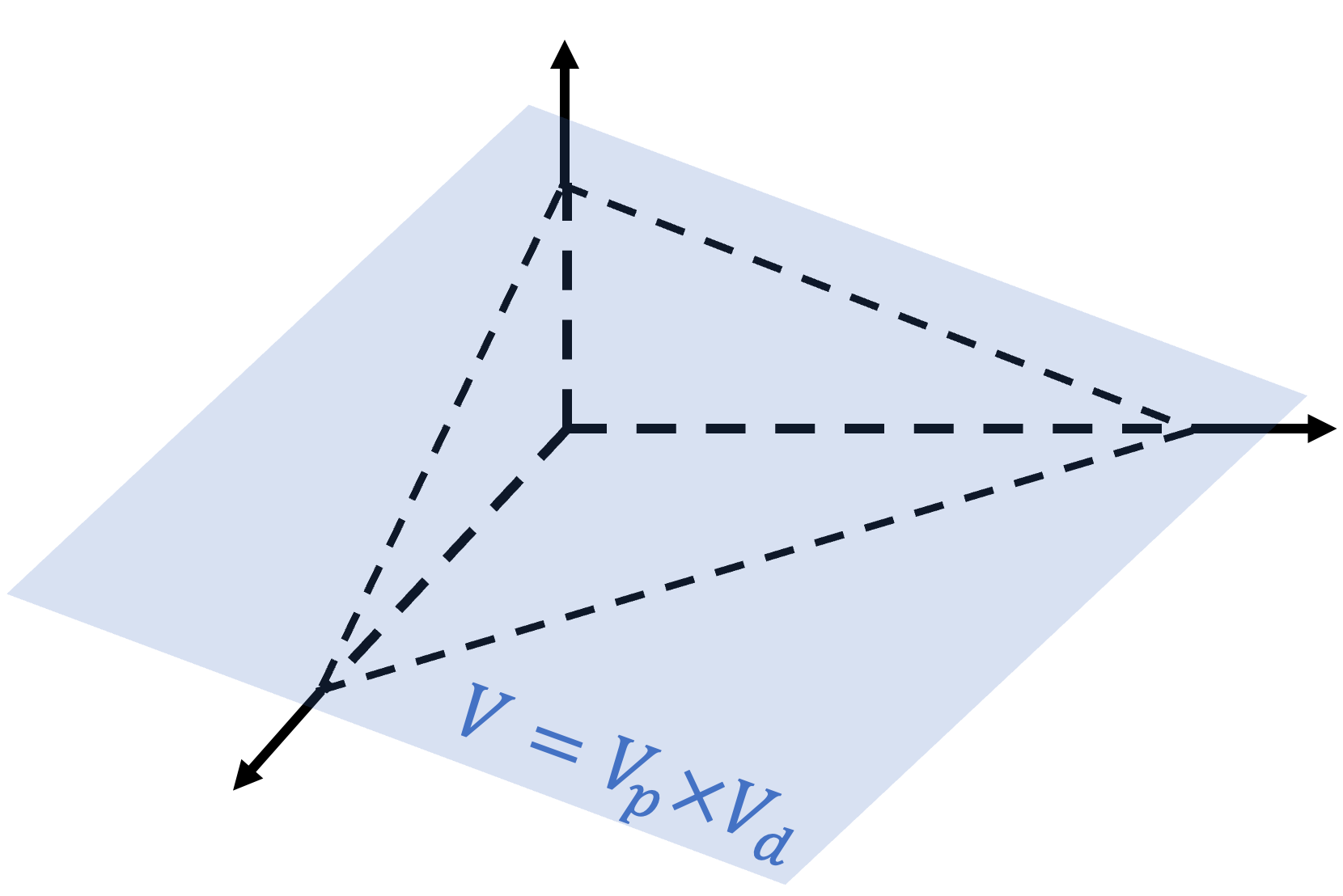}
		\caption{}\label{fig:pdaffine}
	\end{subfigure}
	\hfill
	\begin{subfigure}{0.32\textwidth}
		\includegraphics[width=\linewidth]{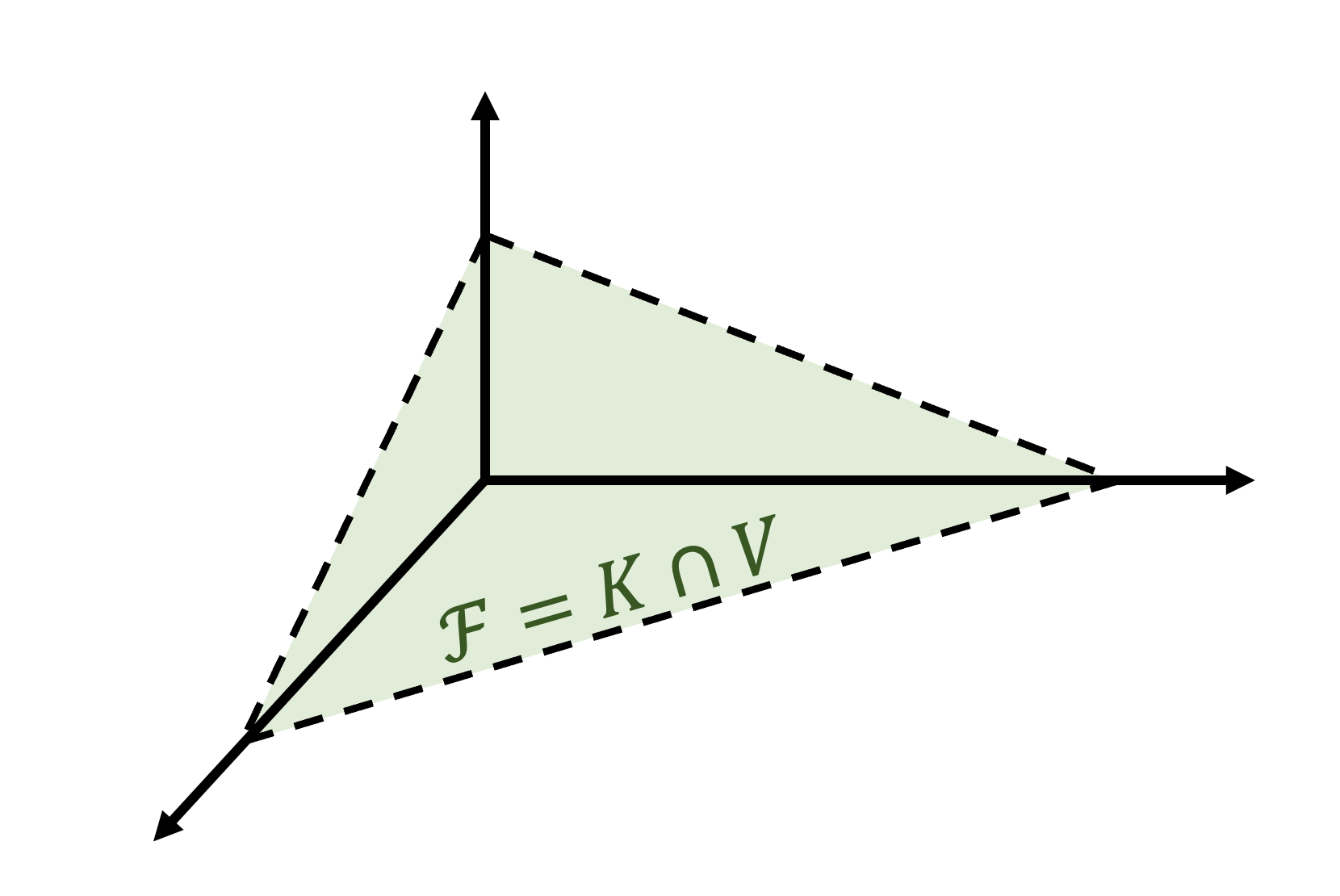}
		\caption{}\label{fig:pdfeasible}
	\end{subfigure} 
	\caption{Illustration of (a) the cone $K$, (b) the affine subspace $V$ (blue region) and (c) the feasible set $\mathcal{F}$ (green region) for an LP. For ease of visualization, these figures are shown in three dimensions even though the dimension of the cone variable pair $w=(x,s)$ is $2n$.}\label{fig:sublevel_geometry}
\end{figure} 

The duality gap of $w=(x,s)$ in the symmetric subspace form is denoted by $\gap(w)$ and defined as
\begin{equation}\label{def_duality_gap}
	\gap(w) :=\hat{s}^{\top}x+\hat{x}^{\top}s .
\end{equation}
We also use  $\gap(x,s)$ to denote $\gap(w)$. 
For feasible $w=(x,s)\in\calF$, this gap is also equal to $x^\top s$, since $x\in\hat{x}+\mathcal{L}$, $s\in\hat{s}+\mathcal{L}^{\bot}$, $\hat{x}\in\mathcal{L}^{\bot}$, and $\hat{s}\in\mathcal{L}$.  Hence $\gap(w)\ge0$ for feasible $w$ because $x\in K_p$ and $s\in K_d=K_p^*$.  Optimality is characterized by feasibility and zero duality gap.  We therefore define the primal--dual optimal solution set as
\begin{equation}\label{def_optimal_W}
	\calW^\star := \calF \cap \{w\in \mathbb{R}^{2n}: \gap(w)=0\} \ .
\end{equation}


\subsection{Condition measures of the primal-dual level sets}\label{sec:intro_sublevel-set-condition-number}

Our analysis in this paper heavily involves three condition measures related to the primal-dual level sets, and all three condition measures have a geometric flavor.  (Level sets are also sometimes called sublevel sets in the literature. In this paper we always call them level sets to be consistent.)
\begin{definition}{\bf ($\delta$-level set)}\label{def level set}
	For any $\delta \ge  0$, the $\delta$-level set is defined as
	\begin{equation}\label{eq delta level set}
		\calW_\delta :=   \calF \cap \{w\in\mathbb{R}^{2n}: \gap(w) \le \delta\} \ ,
	\end{equation}
	which is the set of feasible primal-dual solution $w:=(x,s)$ whose duality gap $\gap(w)$ is at most $\delta$.
\end{definition}
\noindent
Observe that when $\delta = 0$, then $\calW_0 = \calW^\star$, and for all $\delta \ge 0$ we have $\calW^\star \subseteq \calW_\delta$. See Figure \ref{fig:PDsublevel} for an illustration of the $\delta$-level set $\calW_\delta$ (the yellow region) for an LP instance with a unique optimum, where the cone is the nonnegative orthant and $\calW^\star = \{w^\star\}$.  (The figure builds on the instance shown in Figure \ref{fig:sublevel_geometry}.)
The points with duality gaps equal to $0$ and $\delta$ are denoted by the two red dashed lines in the figure.

\begin{figure}[t]
	\centering  
	\begin{subfigure}{0.38\textwidth}
		\includegraphics[width=\linewidth]{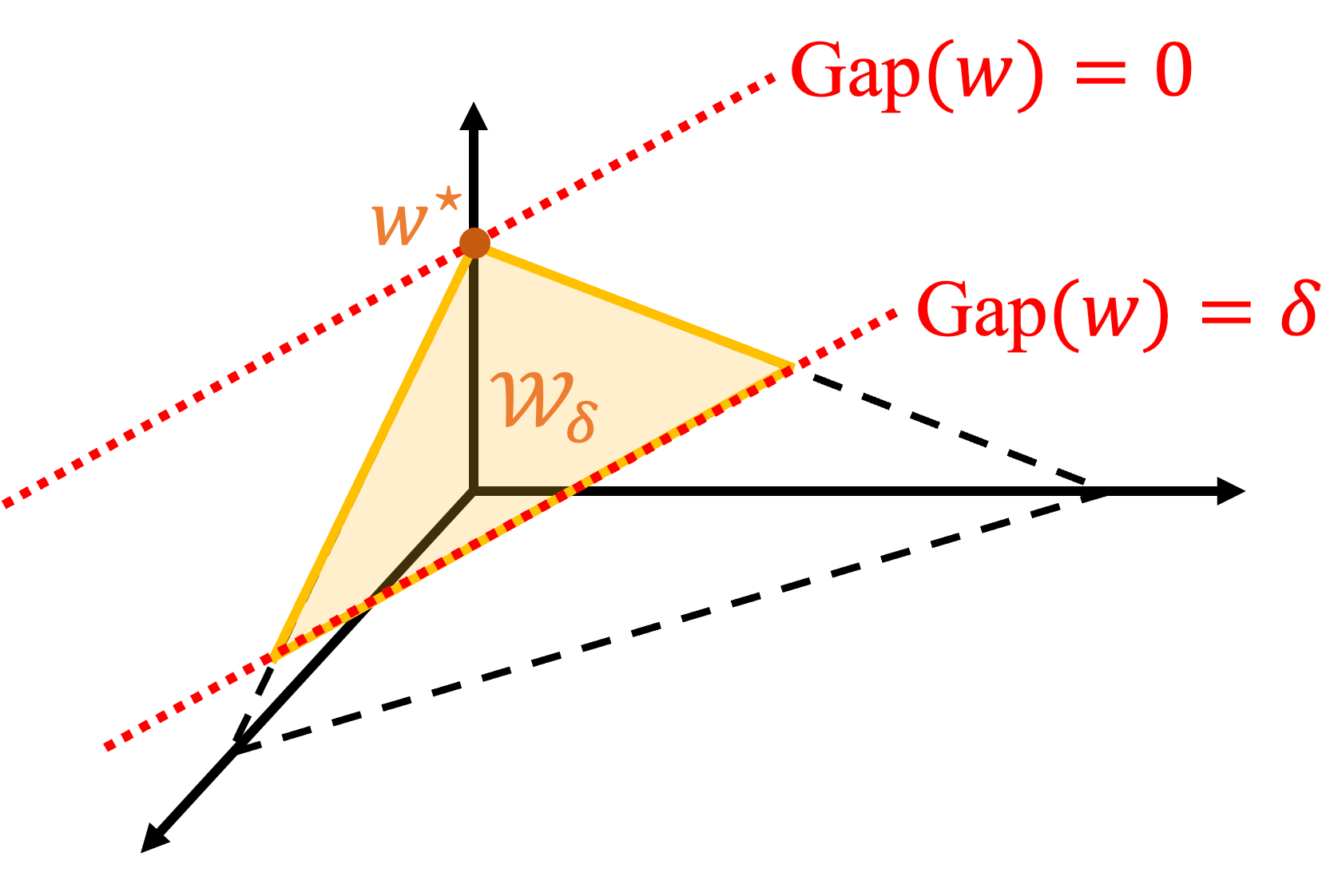}
		\caption{Level set $\calW_\delta$}\label{fig:PDsublevel}
	\end{subfigure} 
	\begin{subfigure}{0.38\textwidth}
		\includegraphics[width=\linewidth]{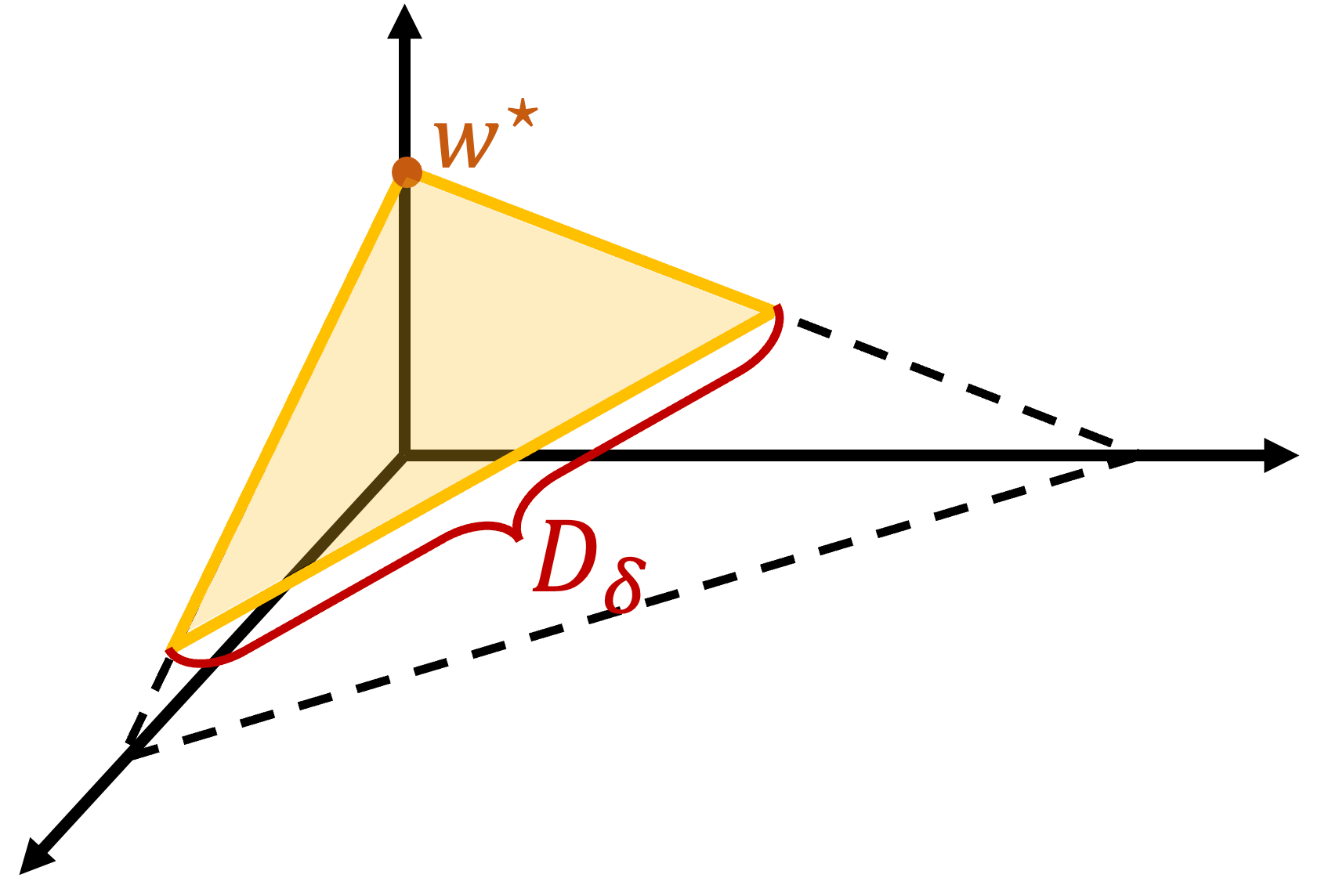}
		\caption{Diameter $D_\delta$}\label{fig:Ddelta}
	\end{subfigure} \hfill\\ 
	\begin{subfigure}{0.38\textwidth}
		\includegraphics[width=\linewidth]{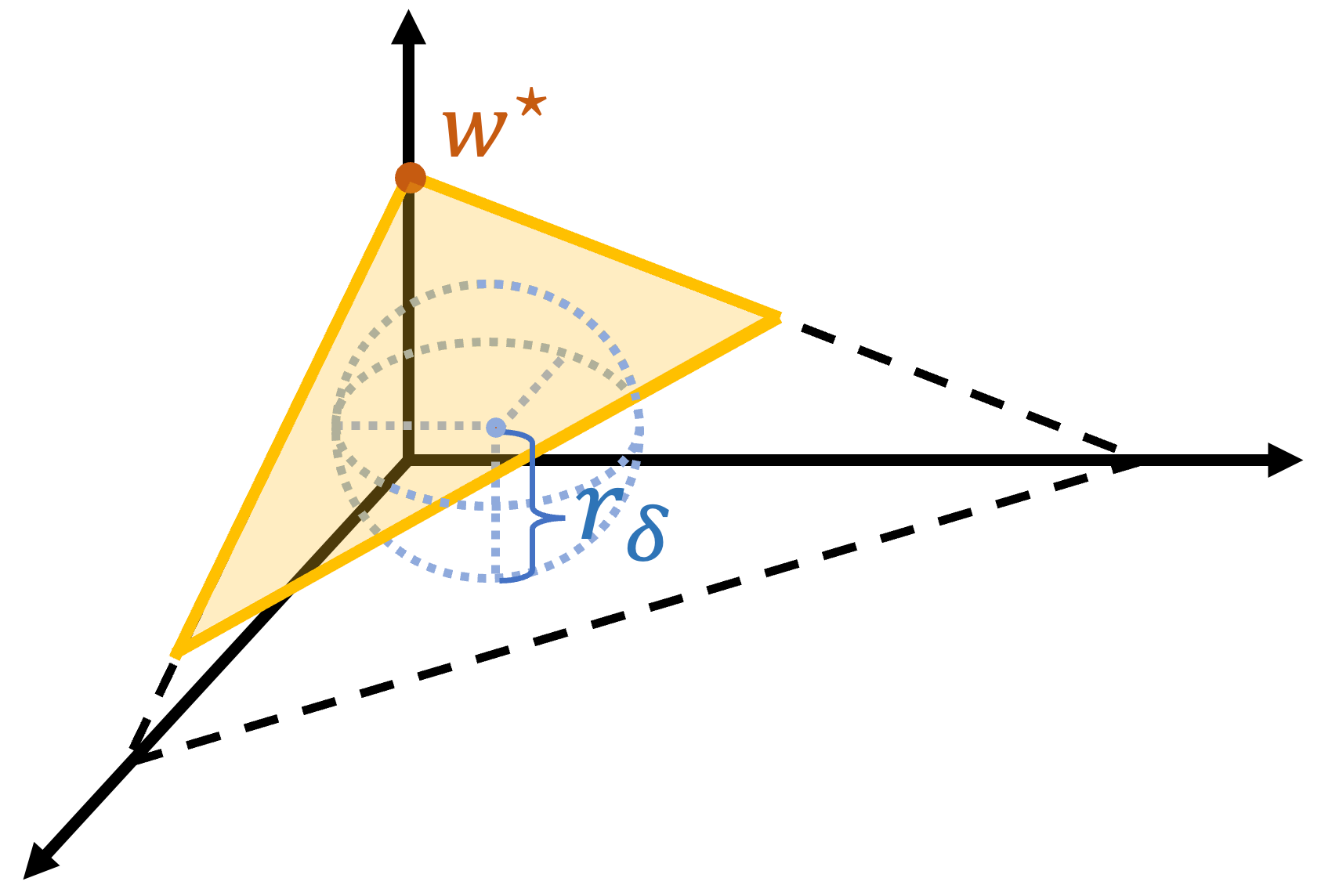}
		\caption{Conic radius $r_\delta$}\label{fig:rdelta}
	\end{subfigure} 
	\begin{subfigure}{0.38\textwidth}
		\includegraphics[width=\linewidth]{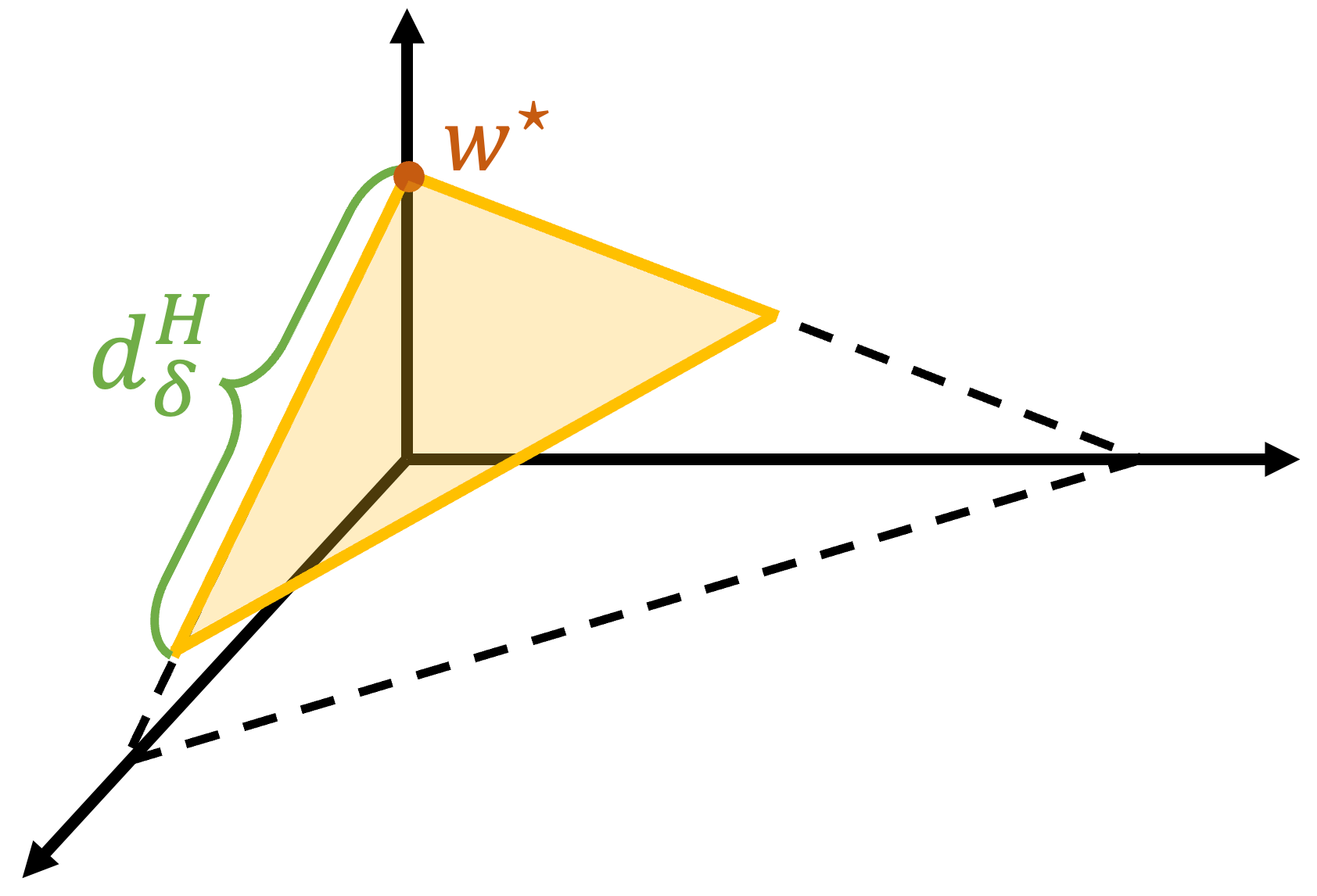}
		\caption{Hausdorff distance $d^H_\delta$}\label{fig:dHdelta}
	\end{subfigure}\hfill
	\caption{Illustration of the level set $\calW_\delta$, and the three geometric condition measures: the diameter $D_\delta$, the conic radius $r_\delta$, and the Hausdorff distance $d^H_\delta$. For visualization, the figures are shown in three dimensions even though the dimension of the cone variable pair $w=(x,s)$ is $2n$.}\label{fig:condition_measures}
\end{figure}

We now present the three condition measures we will use as the basis of our computational guarantees for rPDHG.\begin{definition}{\bf (Diameter of $\calW_\delta$)}\label{def diameter}
	For any $\delta$, the diameter of $\calW_\delta$ is:
	\begin{equation}\label{eq diameter}
		D_\delta := \max_{u,v \in \calW_\delta} \|u - v\| \ .
	\end{equation}
\end{definition}
\begin{definition}{\bf (Conic radius and conic center of $\calW_\delta$)}\label{def radius}
	For any $\delta > 0$, the conic radius $r_\delta$ of $\calW_\delta$ is the optimal value of the following problem:
	\begin{equation}\label{eq radius}
		\begin{aligned}
			(w_\delta,r_\delta) \in & \arg\max_{w\in\mathbb{R}^{2n},r\ge0}  \quad r                                                   \\
			                       & \ \ \ \ \ \ \quad  \operatorname{s.t.} \quad  \ \  w \in \calW_\delta, \ B(w,r)\subseteq K  \ ,
		\end{aligned}
	\end{equation}
\end{definition} \noindent and any optimal $w_\delta$ of this problem is called a conic center of $\calW_\delta$.  Thus  $w_\delta$ is a point of maximum distance from the boundary of $K$ among all points in $\calW_\delta$, and $r_\delta$ is the distance from $w_\delta$ to the boundary of $K$.

\begin{definition}{\bf (Hausdorff distance between $\calW_\delta$ and $\calW^\star$)}\label{def distance to optima}
	For any $\delta$, let $d^H_\delta$ denote the Hausdorff distance between $\calW_\delta$ and $\calW^\star$, namely
	\begin{equation}\label{eq distance}
		d^H_\delta := D^H(\calW_\delta, \calW^\star) = \max_{w\in\calW_\delta} \dist(w,\calW^\star) \ ,
	\end{equation}
	where $D^H(\cdot,\cdot)$ denotes the Hausdorff distance. (Note that the second equality above holds because $\calW^\star \subseteq \calW_\delta$.)
\end{definition}

Figure~\ref{fig:condition_measures} illustrates these quantities for an LP instance
with a unique optimal solution $w^\star$. The yellow region is the level set
$\calW_\delta$. The diameter $D_\delta$ is the largest distance between two points
in $\calW_\delta$. The conic radius $r_\delta$ is the radius of the largest Euclidean
ball centered at a point of $\calW_\delta$ and contained in the cone $K$. For the
nonnegative orthant, this is equivalently the largest possible value of the smallest
component of $w$ among points $w \in \calW_\delta$. The quantity $d_\delta^H$ is the maximum
distance from a point in $\calW_\delta$ to the optimal solution set $\calW^\star$.

\subsection{Preview of our main computational guarantee results}\label{sec:intro_preview}

We now preview our two main computational guarantee results.  The formal statements and proofs are presented later in Section~\ref{sec:complexity_clp} for general CLP instances, and are specialized to LP instances in Section~\ref{sec:complexity_lp}.  Our computational bounds are for the algorithm rPDHG (PDHG with restarts) which performs iterations of PDHG with occasional restarts, and is presented in Algorithm \ref{alg: PDHG with restarts} in Section \ref{sec:complexity_clp}.  Our computational bounds count the number of iterations of PDHG that are invoked in rPDHG.  (A detailed description of the basic PDHG iteration itself is presented in Section \ref{sec:pdhg_clp}.)  The computational cost of an iteration of PDHG is dominated by two matrix-vector products with $A$ and $A^\top$, respectively, plus a projection onto $K_p$. See Section~\ref{sec:pdhg_clp} for details. 

We define a primal-dual solution pair $w=(x,s)$ to be $\eps$-optimal if its distance to the constraints and its duality gap are both at most $\eps$, namely if $\max\{\dist(w,V),\dist(w,K)\}\le\eps$ and $\gap(w)\le\eps$.  In this preview we use $\tilde{O}(\cdot)$ to suppress absolute constants and logarithmic factors except for $\ln(1/\eps)$.  To keep the displayed bounds simple, we also suppose in this preview that $\dist(0,\calW^\star)\ge1$; otherwise one should replace $\dist(0,\calW^\star)$ below by $1+\dist(0,\calW^\star)$.

\begin{mainresult}[Less formal restatement of Theorem~\ref{thm overall complexity clp}]
\label{mainres:clp}
\emph{Under a standard primal-dual Slater condition and using standard step-size rules, rPDHG requires at most
\begin{equation}\label{eq:mainresult1}
	\widetilde{O}\left(
	\inf_{\delta>0}
	\left\{
	\frac{\kappa D_\delta}{r_\delta}\ln\left(\frac{1}{\eps}\right)
	+
	\frac{\kappa d^H_\delta \dist(0,\calW^\star)}{\eps}
	\right\}
	\right)
\end{equation}
iterations of PDHG to compute an $\eps$-optimal primal-dual solution pair for \eqref{pro:preview_primal_subspace} and \eqref{pro:preview_dual_subspace}. }
\end{mainresult}

Main Result~\ref{mainres:clp} is the central result of this paper, which is a computational guarantee for rPDHG for general conic linear optimization problems in terms of the three geometric condition measures $D_\delta$, $r_\delta$, and $d^H_\delta$ of the primal-dual level sets $\calW_\delta$, and the optimality tolerance $\eps$.  The bound \eqref{eq:mainresult1} also involves two other quantities, namely $\kappa$ which is the standard condition number of the matrix $A$, and $\dist(0,\calW^\star)$, which is the norm of the least-norm primal-dual optimal solution.  We note that $\dist(0,\calW^\star)$ arises naturally in the stability analysis of conic optimization under perturbation of the ``rim'' data of the primal/dual paired problems, see for example \cite{renegar1994some}.  

We now discuss and try to give some understanding of the bound \eqref{eq:mainresult1}.  The bound in the curly brackets $\{\cdot\}$ of \eqref{eq:mainresult1} holds for any chosen value of $\delta>0$, so let us choose such a $\delta$ and keep it fixed for now.  The first term of the bound depends logarithmically on $1/\eps$ and is controlled by $\kappa D_\delta/r_\delta$, while the second term is proportional to $1/\eps$ and is controlled by $\kappa d^H_\delta\dist(0,\calW^\star)$.  Examining the first term, the ratio $D_\delta/r_\delta$ is a measure of the shape of $\calW_\delta$: a smaller ratio means that the level set $\calW_\delta$ has a relatively large conic radius compared with its diameter, and hence is geometrically more favorable.  In the second term, the quantity $d^H_\delta$ measures how far the level set $\calW_\delta$ lies from $\calW^\star$.  Thus the bound says that rPDHG has a better guarantee if the level set $\calW_\delta$ is geometrically well-conditioned, in the sense of having a small ratio $D_\delta/r_\delta$, and being close to the optimal solution set, in the sense of having a small value of $d^H_\delta$.

Of course the bound \eqref{eq:mainresult1} is the \underline{infimum} of the quantity in the curly brackets $\{ \cdot \}$ of \eqref{eq:mainresult1} over all choices of $\delta >0$, i.e., over all level sets $\calW_\delta$, and this offers the opportunity to choose the value of $\delta$ that yields the best value of the curly-bracketed term.  Thus the computational guarantee is better if there exists any such level set $\calW_\delta$ with favorable geometry.  

It is also instructive in considering the bound \eqref{eq:mainresult1} to look at the limiting values of the geometric measures as $\delta$ decreases to $0$.  It is simple to see that $d^H_\delta \searrow 0$ as $\delta \searrow 0$ so the effect of the second term in \eqref{eq:mainresult1} disappears as $\delta \searrow 0$.  But also $r_\delta \searrow 0$ as $\delta \searrow 0$, so the first term in \eqref{eq:mainresult1} may blow up as $\delta \searrow 0$ depending on the behavior of $D_\delta$.  When there are multiple optima (either in the primal or the dual) then $D_\delta$ is bounded away from zero for all $\delta \ge 0$ since $D_\delta \ge D_0 = \max_{u,v \in \calW^\star} \|u - v\| > 0$.  In this case $D_\delta/r_\delta \rightarrow +\infty$ as $\delta \searrow 0$, and the sublinear convergence rate in the second term is likely to be the dominant term in the computational bound.  Indeed, even when the optimum is unique, it is still possible that $D_\delta/r_\delta \rightarrow +\infty$ as $\delta \searrow 0$, in which case the second term in the bound can be the dominant term.  Even in these cases where the first term blows up as $\delta$ goes to $0$, there may well be choices $\delta$ for which $\calW_\delta$ has favorable geometry and which yield reasonably good values of the overall bound in \eqref{eq:mainresult1}.

As alluded to above, when the level sets $\calW_\delta$ remain geometrically well-conditioned in the sense that $D_\delta/r_\delta$ is bounded as $\delta \searrow 0$, then Main Result~\ref{mainres:clp} immediately yields a linear convergence guarantee.

\begin{corollary}[Preview of Corollary~\ref{cor: linear convergence complexity clp}]\label{cor:intro_linear_convergence}
Under the setting of Main Result~\ref{mainres:clp},  rPDHG requires at most $
	\widetilde{O}\left(\kappa \left(\liminf_{\delta\searrow0}\frac{D_\delta}{r_\delta}\right)\ln\left(\frac{1}{\eps}\right)\right)$
iterations of PDHG to compute an $\eps$-optimal primal-dual solution pair for \eqref{pro:preview_primal_subspace} and \eqref{pro:preview_dual_subspace}.
\end{corollary}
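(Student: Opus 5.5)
The corollary follows from Main Result~\ref{mainres:clp} by a limiting choice of $\delta$: we take $\delta$ small enough, depending on $\eps$, that the sublinear term becomes a constant, while the first term is close to its $\liminf$ value. Set $L:=\liminf_{\delta\searrow0} D_\delta/r_\delta$. If $L=+\infty$ there is nothing to prove, so assume $L<\infty$. Then there is a sequence $\delta_k\searrow0$ with $D_{\delta_k}/r_{\delta_k}\le 2L$ for all large $k$; the factor $2$ is only a constant and is absorbed into $\widetilde{O}(\cdot)$.

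Next we show that $d^H_\delta\to0$ as $\delta\searrow0$, which the paper asserts as simple. Under the Slater condition $\calW^\star$ is nonempty and bounded. The $\calW_\delta$ are closed and nested, with $\bigcap_{\delta>0}\calW_\delta=\calW^\star$. Moreover $\calW_{\delta}$ is bounded for some $\delta>0$, since $D_{\delta_k}\le 2L\,r_{\delta_k}<\infty$. A standard compactness argument then gives the limit. Suppose $d^H_{\delta}\ge\eta>0$ along a sequence $\delta\searrow0$. Pick $w_\delta\in\calW_\delta$ with $\dist(w_\delta,\calW^\star)\ge\eta$. These points lie in a compact set, so a subsequence converges to some $\bar w$. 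By continuity of $\gap$ and closedness of $\calF$, we get $\bar w\in\calW^\star$, which contradicts $\dist(\bar w,\calW^\star)\ge\eta$.

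Now fix $\eps>0$ and choose $k$ large enough that
\[ d^H_{\delta_k}\le \eps/\dist(0,\calW^\star) \quad\text{and}\quad D_{\delta_k}/r_{\delta_k}\le 2L. \]
Plugging $\delta=\delta_k$ into the infimum in \eqref{eq:mainresult1} bounds the iteration count by
\[ \widetilde{O}\bigl(2\kappa L\ln(1/\eps)+\kappa\bigr)=\widetilde{O}\bigl(\kappa L\ln(1/\eps)\bigr). \]
Here we used $L\ge1$, which holds whenever $r_\delta>0$: a ball of radius $r_\delta$ centred at $w_\delta$ lies in $K$, so $D_\delta\ge r_\delta$ after intersecting with $V$. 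If that is unavailable, we simply keep the additive $\kappa$ term, which $\widetilde{O}$ permits.

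The main obstacle is the precise form of the formal Theorem~\ref{thm overall complexity clp}. It may contain logarithmic factors involving $D_\delta$, $r_\delta$, or $d^H_\delta$ that blow up as $\delta\searrow0$. If so, I would check that these appear only inside logarithms of quantities that stay bounded along our sequence. For instance $D_{\delta_k}\le D_{\delta_1}$, and $r_{\delta_k}$ appears only through the bounded ratio. If some $\ln(1/r_\delta)$ factor appears, I would choose $\delta_k$ no smaller than needed, namely of order $\eps$-dependent, so that it contributes only an extra $\ln(1/\eps)$-type factor consistent with the $\widetilde{O}$ convention.
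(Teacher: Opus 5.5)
Your proposal is correct and is essentially the paper's argument: bound $T$ by $T_\delta$ from Theorem~\ref{thm overall complexity clp} and drive $\delta\searrow0$ along a sequence realizing the $\liminf$, using $d^H_\delta\to0$. You justify that limit by a compactness argument, which the paper only asserts. The one difference is that you fix an $\eps$-dependent $\delta_k$, which costs a factor $2$ and an additive $O(\kappa)$. The paper instead passes directly to $\liminf_{\delta\searrow0}T_\delta$: $T$ does not depend on $\delta$, and the logarithm in \eqref{eq overall complexity} is also $\delta$-independent, so your worry about $\delta$-dependent log factors is moot. Taking that limit kills the linear term and gives the exact bound $99\kappa\,(\liminf_{\delta\searrow0}D_\delta/r_\delta)\ln(\cdot)$.
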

 
\noindent Xiong \cite{xiong2024accessible} has reported experimental evidence that this bound is consistent with computational practice.

The level-set geometry also recovers the known linear convergence behavior of rPDHG for linear optimization. For LP instances, the polyhedral structure allows us to refine the linear convergence guarantee.  Let $\bar{\delta}$ denote the best suboptimal extreme point gap, namely the smallest positive value of $\gap(w)$ over extreme points of $\calF$ that are not optimal.

\begin{mainresult}[Less formal restatement of Theorem~\ref{thm overall complexity lp}]
\label{mainres:lp}
For LP instances (namely, $K_p=\mathbb{R}^n_+$), rPDHG requires at most
\begin{equation}\label{eq:mainresult2}
	\widetilde{O}\left(
	\kappa
	\left(\inf_{0<\delta\le\bar{\delta}}\frac{D_\delta}{r_\delta}\right)
	\ln\left(\frac{1}{\eps}\right)
	\right)
\end{equation}
iterations of PDHG to compute an $\eps$-optimal primal-dual solution pair for \eqref{pro:preview_primal_subspace} and \eqref{pro:preview_dual_subspace}.
\end{mainresult}

Main Result~\ref{mainres:lp} recovers the global linear convergence of rPDHG for LP; it follows as an application of the key machinery in the analysis and proof of Main Result~\ref{mainres:clp}.  Compared with Main Result~\ref{mainres:clp}, the LP guarantee eliminates the sublinear term involving $1/\eps$ from \eqref{eq:mainresult1}.  However, it does not mean that Main Result~\ref{mainres:lp} is always the better quantitative characterization of rPDHG, even for LP instances. The reason for this is that when $\bar{\delta}$ is very close to $0$, the corresponding values of $D_\delta/r_\delta$ might be very large (due to highly unfavorable geometry very near the optimal solution) and using such local geometry to characterize the global convergence behavior can be overly pessimistic.  By contrast, Main Result~\ref{mainres:clp} allows a more flexible tradeoff: the best value of $\delta$ in the infimum in \eqref{eq:mainresult1} may be quite a bit larger than $\bar \delta$. We will return to this point later in Section \ref{subsec:illustrative_lp_instance} where we will present a family of examples where the LP linear-convergence bound in Main Result~\ref{mainres:lp} is worse than the more general level-set tradeoff in Main Result~\ref{mainres:clp}.



\subsection{Other related work and follow-up work}
In addition to the research papers discussed earlier, several other works have also analyzed the performance of rPDHG and its variants for solving LP. \cite{hinder2023worst} presents a worst-case complexity of rPDHG on totally-unimodular LP instances, that does not rely on any condition measures.  \cite{lu2023geometry} develops a two-phase theory of the behavior of PDHG without restarts, where the initial sublinear convergence phase is followed by a linear convergence phase that is characterized by the Hoffman constant of a reduced system, and \cite{lu2022infimal} shows that the last iterate of PDHG without restarts also has a linear convergence rate but it is slower than that of rPDHG.

Several other first-order methods have been studied for LP and general CLP.  \cite{lin2021admm} proposes an ADMM-based interior-point method that leverages the framework of the homogeneous self-dual interior-point method and employs ADMM to solve the inner log-barrier problems. Further enhancements and extensions to CLP were subsequently developed by \cite{deng2024enhanced}. \cite{o2016conic,o2021operator} use ADMM to directly solve the homogeneous self-dual formulation for the general CLP, and \cite{lu2023practicalqp,huang2024restarted} study accelerated variants of rPDHG for solving a convex quadratic programming problem, which itself is a special case of SOCP and hence also of CLP.

\subsection{Notation}
Throughout this paper, we use the following notation for the most common cone examples: $\mathbb{R}^n_+$ denotes the nonnegative orthant,  $\mathbb{S}^{d\times d}_+$ denotes the semidefinite cone, which is the set of positive semidefinite symmetric matrices in $\mathbb{R}^{d\times d}$, $\mathbb{K}^{d+1}_{\mathsf{soc}}$ denotes the second-order cone $\{(x,t):x\in \mathbb{R}^d, t\in \mathbb{R}, \|x\|_2 \le t\}$, and $\mathbb{K}_{\mathsf{exp}}:=\operatorname{cl}\{(r,s,t)\in\mathbb{R}^3:s>0,\ s\exp(r/s)\le t\}$ denotes the exponential cone.

For a matrix $A\in\mathbb{R}^{m\times n}$, $\operatorname{Null}(A):=\{x\in\mathbb{R}^n:Ax = 0\}$ denotes the null space of $A$ and $\operatorname{Im}(A) :=\{Ax:x\in\mathbb{R}^n\}$ denotes the image of $A$. For any set $\calX\subset \mathbb{R}^n$,  $P_\calX: \mathbb{R}^n \to \mathbb{R}^n$ denotes the Euclidean projection onto $\calX$, namely, $P_\calX(x) := \arg\min_{\hat{x}\in \calX} \|x - \hat{x}\|$.
If not specified via definition, $\|\cdot\|$ in this paper denotes the Euclidean norm. The Moore-Penrose inverse of a square matrix $M$ is denoted by $M^\dag$. Let $B(x,r)$ denote the ball centered at $x$ with radius $r$. For any $M \in \mathbb{S}_{+}^{n\times n}$, $\|\cdot\|_M$ denotes the inner product ``norm'' induced by $M$, namely, $\|z\|_M :=\sqrt{z^\top Mz}$. (Here we allow $M$ to not be strictly positive definite, in which case $\|z\|_M$ is a semi-norm but not necessarily a norm.) For any $x \in \mathbb{R}^n$ and set $\calX\subset \mathbb{R}^n$, the Euclidean distance between $x$ and $\calX$ is denoted by $\dist(x,\calX):= \min_{\hat{x} \in \calX} \|x-\hat{x}\|$ and the $M$-norm distance between $x$ and $\calX$ is denoted by $\dist_M(x,\calX):= \min_{\hat{x} \in \calX} \|x-\hat{x}\|_M$.
For $A \in \mathbb{R}^{m\times n}$, we use $\sigma_{\max}^+(A)$ and $\sigma_{\min}^+(A)$ to denote the largest and smallest positive singular values of $A$, respectively.
For $x\in\mathbb{R}^n$, we use $x^+$ to denote the positive part of $x$. For any set $\calX \subset \mathbb{R}^n$, $\textsf{int}\calX$ denotes the interior of $\calX$. For the primal and dual affine spaces $V_p$ and $V_d$ introduced in Section~\ref{sec:intro_subspace}, $\mathcal{L}$ and $\mathcal{L}^{\bot}$ denote their associated linear subspaces, respectively. For any linear subspace $\mathcal{S}$ in $\mathbb{R}^n$, we use $\mathcal{S}^{\bot}$ to denote the corresponding orthogonal complement of $\mathcal{S}$. For any cone $K$, we use $K^*$ to denote the corresponding dual cone of $K$. For scalars $a$ and $b$ we denote $a \vee b := \max\{a,b\}$.

\section{Preliminaries: conic linear optimization problem, and PDHG}\label{sec:CLP}

\subsection{Symmetric subspace format for general conic linear optimization problem}\label{sec:get_general_clp_dual}

We now review how the data representation \eqref{pro: general primal clp} is converted (conceptually and/or computationally) to the symmetric subspace form introduced in Section~\ref{sec:intro_subspace}, which is standard in conic optimization, see for example \cite{ben2001lectures,nemirovski2005cone}.
We do not assume that $A$ has linearly independent rows, since eliminating linear dependence can be expensive for very large-scale instances.  
Let $y\in\mathbb{R}^m$ be the multiplier on the equation $Ax=b$. The saddlepoint formulation of \eqref{pro: general primal clp} is
\Equationvalidatefalse
\begin{equation}\tag{PD}\label{pro: general saddlepoint clp}
	\min_{x \in K_p\subseteq\mathbb{R}^n} \ \max_{y\in\mathbb{R}^m} \ L(x,y):= c^\top x  + b^\top y - x^\top A^\top y \ .
\end{equation}
\Equationvalidatetrue%
We obtain the dual problem by switching the order of the minimum and maximum, and introducing the slack variable $s=c-A^\top y$ yields the following form of the dual problem:
\Equationvalidatefalse
\begin{equation}\tag{D$_{y,s}$}\label{pro: general dual clp}
	\max_{y\in \mathbb{R}^m,\,s\in\mathbb{R}^n}  \ b^\top y \quad
	\text {s.t.} \quad c-A^\top y=s,\quad s\in K_d \ ,
\end{equation}
\Equationvalidatetrue%
where $K_d:=K_p^*:=\{s\in\mathbb{R}^n:x^\top s\ge 0 \text{ for all }x\in K_p\}$ is the dual cone of $K_p$.

The variable $s$ is the dual cone variable, and it is useful to express the dual objective directly in terms of $s$. Define $q:=A^\top(AA^\top)^\dag b$ and $q_0:=q^\top c=b^\top(AA^\top)^\dag A c$. Since primal feasibility implies $b\in\operatorname{Im}(A)$, we have $Aq=b$. Therefore, for any feasible pair $(y,s)$ of \eqref{pro: general dual clp} it holds that
\begin{equation}\label{eq by=-qs}
	b^\top y=q^\top A^\top y=q^\top(c-s)=q_0-q^\top s \ .
\end{equation}
This shows two things at once: all dual multipliers with the same slack variable $s$ have the same objective value, and the dual problem can be written solely in terms of the slack variable as
\Equationvalidatefalse
\begin{equation}\tag{D$_s$}\label{pro: general dual clp on s}
	\max_{s\in\mathbb{R}^n}  \ -q^\top s+q_0
	\quad \text{s.t.} \quad
	s\in c+\operatorname{Im}(A^\top),\quad s\in K_d \ .
\end{equation}
\Equationvalidatetrue%
Conversely, if $\hat{s}$ is feasible for \eqref{pro: general dual clp on s}, then $c-\hat{s}\in\operatorname{Im}(A^\top)$, and $\hat{y}:=(AA^\top)^\dag A(c-\hat{s})$ satisfies $c-A^\top\hat{y}=\hat{s}$. Thus \eqref{pro: general dual clp} and \eqref{pro: general dual clp on s} are equivalent for the purposes of feasibility and objective value. The quantities $q$ and $(AA^\top)^\dag$ are introduced for analysis and notation only; they do not need to be computed in order to run PDHG.

Recall from Section~\ref{sec:intro_subspace} that the objective vector of the primal problem in the symmetric subspace form is defined modulo the orthogonal linear subspace.  In the data representation above, we have $\mathcal L=\operatorname{Null}(A)$ and $\mathcal L^\bot=\operatorname{Im}(A^\top)$, and hence we may replace $c$ by its orthogonal projection onto $\mathcal L$.  Specifically, let $u:=(AA^\top)^\dag A c$ and $\bar c:=c-A^\top u=P_{\mathcal L}(c)$.  This replacement is without loss of generality: for every primal feasible $x$ we have $\bar c^\top x=c^\top x-u^\top b$, so the primal objective is shifted by the constant $u^\top b$, and $A^\top y+s=c$ is equivalent to $A^\top(y-u)+s=\bar c$, so the dual slack feasible set is unchanged.  The corresponding feasible and optimal multiplier sets are translated by $-u$.  Therefore $\calF$, $\calW^\star$, and the level-set condition measures introduced in Section \ref{sec:intro} are unchanged.  With this choice we have $\bar c\in\mathcal L$, $q_0=q^\top\bar c=0$, and the gap becomes $\gap(x,s)=\bar c^\top x+q^\top s$.  For simplicity, throughout the analysis we simply replace $\bar c$ by $c$.


With this convention in place, the affine spaces in the symmetric subspace form are
\begin{equation}\label{eq:data_subspace_representation}
	V_p=\{x\in\mathbb{R}^n:Ax=b\}=q+\operatorname{Null}(A),
	\quad
	V_d=\{s\in\mathbb{R}^n:c-A^\top y=s \text{ for some }y\}=c+\operatorname{Im}(A^\top).
\end{equation}
Thus $\mathcal{L}=\operatorname{Null}(A)$ is the linear subspace associated with $V_p$, while $\mathcal{L}^{\bot}=\operatorname{Im}(A^\top)$ is the linear subspace associated with $V_d$. Consequently, the data representation \eqref{pro: general primal clp} realizes the abstract pair \eqref{pro:preview_primal_subspace}--\eqref{pro:preview_dual_subspace} with $\hat{x}=q\in\mathcal{L}^{\bot}$ and $\hat{s}=c\in\mathcal{L}$. In particular, $V=V_p\times V_d$, $K=K_p\times K_d$, and $\calF=V\cap K$, as introduced in Section~\ref{sec:intro_subspace}. The primal objective value is $c^\top x$, the slack-variable dual objective value is $-q^\top s$, and the duality gap is $\gap(x,s)=c^\top x+q^\top s$, which matches the symmetric gap formula in \eqref{def_duality_gap}.

In most of our results we will make the following regularity assumption.

\begin{assumption}\label{assump:striclyfeasible}
	There exists a primal feasible solution in the interior of $K_p$ and a dual feasible solution in the interior of $K_d$.
\end{assumption}
\noindent
Assumption~\ref{assump:striclyfeasible} is the primal-dual
Slater condition for conic linear optimization, see for example
\cite{nesterov1994interior,renegar2001mathematical,YuTodd97,nesterov1998primal}. 
This assumption is not required for strong duality in LP, nor is it required for running first-order methods for LP \cite{applegate2023faster}; however, strong duality for CLP can fail in the absence of Assumption \ref{assump:striclyfeasible}. Also, Assumption \ref{assump:striclyfeasible} is not a prerequisite for running rPDHG; however, it guarantees finite values of the geometric measures $D_\delta$, $1/r_\delta$, and $d^H_\delta$ of the primal-dual level sets $\calW_\delta$, see \cite{freund2003primal}.

Finally, we use $\calX^\star$ and $\calS^\star$ to denote the optimal primal and dual slack-variable solution sets, respectively, so that $\calW^\star=\calX^\star\times\calS^\star$. We define the corresponding set of optimal dual multipliers and the saddlepoint solution set by
	$\calY^\star:=\{y\in\mathbb{R}^m:c-A^\top y\in\calS^\star\}$ and $\calZ^\star:=\calX^\star\times\calY^\star$.
It holds that $\calS^\star=c-A^\top\calY^\star$.

\subsection{PDHG for CLP}\label{sec:pdhg_clp}
The primal-dual hybrid gradient method (PDHG) was introduced in \cite{esser2010general,pock2009algorithm} in the context of solving general convex-concave saddlepoint problems, of which the saddlepoint problem \eqref{pro: general saddlepoint clp} is a class of instances.
Algorithm \ref{alg: one PDHG} describes a single iteration of PDHG for  \eqref{pro: general saddlepoint clp}, which we denote as \textsc{OnePDHG}$(x,y)$, where $\tau >0 $ and $\sigma >0$ are the primal and dual step-sizes, respectively.
\begin{algorithm}[htbp]
	\SetAlgoLined
	\SetKwProg{Fn}{Function}{}{}
	\Fn{\textsc{OnePDHG}$(x,y)$}{
		$x^{+} \leftarrow P_{K_p}\left(x-\tau\left(c-A^{\top} y\right)\right) $ \;\label{line:update_x}
		$y^{+} \leftarrow y+\sigma\left(b-A\left(2 x^{+}-x\right)\right)$ \;\label{line:update_y}
		return $z^+=(x^+,y^+)$ \;}
	\caption{One iteration of PDHG on $(x,y)$ for problem \eqref{pro: general saddlepoint clp}}\label{alg: one PDHG}
\end{algorithm}
Let $z:=(x,y)\in\mathbb{R}^{m+n}$ denote the combined primal/dual variables, and then PDHG generates iterates as follows:
$$
	z^{k+1} \leftarrow \textsc{OnePDHG}(z^k) \ \text{ for }k=0,1,2,\ldots .
$$
It should be noted that the convergence guarantees for PDHG rely on the step-sizes $\tau$ and $\sigma$ being sufficiently small. In particular, if the following condition is satisfied:
\begin{equation}\label{robsummer}
	M:=	\begin{pmatrix}
		\frac{1}{\tau}I_n & A^\top             \\
		A                & \frac{1}{\sigma}I_m
	\end{pmatrix}  \in  \mathbb{S}^{m+n}_+ \ , 
\end{equation}
then PDHG's average iterates will converge to a saddlepoint of the convex-concave problem \cite{chambolle2011first}. The above requirement is equivalently written as:
\begin{equation}\label{eq:general_stepsize}
	\tau > 0, \ \sigma >0, \ \text{ and } \ \tau\sigma \le  \left( \frac{1}{ \sigma_{\max}^+(A) } \right)^2 \ .
\end{equation}
Furthermore, the matrix $M$ defined in \eqref{robsummer} turns out to be particularly useful in analyzing the convergence of PDHG through its induced inner product norm defined by $\| z \|_M := \sqrt{z^\top M z}$, which will be used extensively in the rest of this paper. 

The main computational effort in executing \textsc{OnePDHG} is in computing the two matrix-vector products and computing the projection onto $K_p$. In practice, most CLP instances of interest are those where $K_p$ is a cross-product of standard cones, namely $\mathbb{R}^n_+$, $\mathbb{K}^{d+1}_{\textsc{soc}}$, $\mathbb{K}_{\mathsf{exp}}$, and $\mathbb{S}^{d \times d}_+$ \cite{nesterov1994interior}. These cones all have well-known projection operators \cite{parikh2014proximal,friberg2023projection}. Projection onto $\mathbb{R}^n_+$ is given by $
	P_{\mathbb{R}^n_+}(v):=v^+ $, and projection onto $\mathbb{K}^{d+1}_{\textsc{soc}}$ is given by:
$$
	P_{\mathbb{K}^{d+1}_{\mathsf{soc}}}(v, t) := \left\{\begin{array}{ll}0 & \text{ if }\|v\| \leq-t \\ (v, t) & \text{ if } \|v\| \leq t \\ \tfrac{1+t /\|v\|}{2}\cdot \left(v,\|v\|\right) & \text{ if } \|v\| \geq|t|\end{array} \right.\ .
$$
Projection onto an exponential cone can be computed by solving a univariate root-finding problem \cite{friberg2023projection,lin2025pdcs}. 
Among these cones, only projection onto the semidefinite cone may be computationally challenging because the classical exact projection usually involves a full matrix eigendecomposition.
Recent factorization-free approaches instead approximate the PSD-cone projection using polynomial filters evaluated through matrix multiplications, and these approximations can be implemented efficiently on GPUs \cite{kang2025factorizationfree}.

Furthermore, if $K_p$ is the cross-product of several cones, then each of these projections can be carried out independently (and in parallel) for each cone.

\subsection{Normalized duality gap for the saddlepoint problem \eqref{pro: general saddlepoint clp}}

To evaluate the quality of a candidate solution $z=(x,y)$, \cite{applegate2023faster} defined the ``normalized duality gap'' in the context of the saddlepoint formulation of LP. Here we simply extend this definition to CLP and we show that the normalized duality gap provides upper bounds on the residuals of the optimality conditions of CLP.
\begin{definition}[Normalized duality gap]
	Suppose that the step-sizes $\tau$ and $\sigma$ satisfy \eqref{eq:general_stepsize}, so that $M\succeq 0$ in \eqref{robsummer}. For any $z = (x,y)\in K_p \times \mathbb{R}^m$ and $r > 0$, define
	$$
		B(r;z) := \left\{\hat{z} := (\hat{x},\hat{y}):  \hat{x}\in K_p \text{ and } \|\hat{z} -z\|_M \le r  \right\} \ .
	$$
	The normalized duality gap of the saddlepoint problem \eqref{pro: general saddlepoint clp} is then defined as
	\begin{equation}\label{sunny}
		\rho(r;z) := \frac{1}{r}\sup_{  \hat{z} \in B(r;z) }  \big[ L(x,\hat{y}) - L(\hat{x},y) \big] \ .
	\end{equation}
\end{definition} 
\noindent The normalized duality gap defined in \cite{applegate2023faster} is just a special case of the above definition when $K_p = \mathbb{R}^n_+$. Lemma \ref{lm: convergence of PHDG without restart} below shows that the normalized duality gap yields upper bounds on the distances to the affine set $V$ and to the cone $K$ of the primal-dual cone variable pair $w=(x,s)$, and also bounds the duality gap $\gap(x,s)$. (Lemma 4 of \cite{applegate2023faster} showed that the normalized duality gap
bounds the KKT residuals for LP. The proof of Lemma
\ref{lm: convergence of PHDG without restart} follows the same basic argument
and extends the corresponding residual bounds to CLP.) Before stating the lemma we introduce the following definitions regarding the singular values and condition number of $A$:
\begin{equation}\label{eq  def lamdab min max}
	\lambda_{\max}:= \sigma_{\max}^+\left(A \right)\text{, }	\lambda_{\min}:= \sigma_{\min}^+\left(A \right), \text{ and }\kappa := \frac{\lambda_{\max}}{\lambda_{\min}}  \ .
\end{equation}

\begin{lemma}\label{lm: convergence of PHDG without restart}
	Suppose that $\tau$ and $\sigma$ satisfy \eqref{eq:general_stepsize}. For any $r > 0$ and $\bar{z} :=(\bar{x},\bar{y})$ such that $\bar{x} \in K_p$, and $\bar s :=c - A^\top \bar{y}$, the normalized duality gap $\rho(r;\bar{z})$ provides the following bounds for $\bar w :=(\bar x, \bar s)$:
	\begin{enumerate}
		\item Distance to the affine subspace: $ \dist(\bar{w},V)  \le \frac{1}{\sqrt{\sigma} \lambda_{\min}}\cdot \rho(r;\bar{z})$, \label{item_gap1}
		\item Distance to the cone: $ \dist(\bar{w},K)  \le \frac{1}{\sqrt{\tau}} \cdot \rho(r;\bar{z})$, and \label{item_gap2}
		\item Duality gap: $\gap(\bar{w})  \le   \max\{ r, \|\bar{z}\|_M\} \rho(r;\bar{z})$.\label{item_gap3}
	\end{enumerate}
\end{lemma}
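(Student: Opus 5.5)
The plan is to expand the gap function $L(\bar x,\hat y)-L(\hat x,\bar y)$ explicitly and, for each of the three items, plug in one well-chosen test point $\hat z\in B(r;\bar z)$. Using $\bar s=c-A^\top\bar y$, a direct computation gives
\begin{equation*}
L(\bar x,\hat y)-L(\hat x,\bar y)=(\hat y-\bar y)^\top(b-A\bar x)+(\bar x-\hat x)^\top\bar s ,
\end{equation*}
which is affine in $\hat z$ and vanishes at $\hat z=\bar z$. Two facts about the $M$-norm are used throughout: $\|(0,\Delta y)\|_M=\|\Delta y\|/\sqrt\sigma$ and $\|(\Delta x,0)\|_M=\|\Delta x\|/\sqrt\tau$. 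Hence any feasible test point yields $r\rho(r;\bar z)\ge$ (its value).

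\textbf{Item 1.} Since $\bar s\in c+\operatorname{Im}(A^\top)=V_d$ by construction, we have $\dist(\bar w,V)=\dist(\bar x,V_p)$. Because $b\in\operatorname{Im}(A)$, this distance is at most $\|A\bar x-b\|/\lambda_{\min}$. Take $\hat x=\bar x$ and $\hat y=\bar y+r\sqrt\sigma\,(b-A\bar x)/\|b-A\bar x\|$ (if $A\bar x=b$ there is nothing to prove). This point lies in $B(r;\bar z)$ and its value is $r\sqrt\sigma\|b-A\bar x\|$. Therefore $\rho\ge\sqrt\sigma\|A\bar x-b\|$, which gives item 1.

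\textbf{Item 2.} Since $\bar x\in K_p$, $\dist(\bar w,K)=\dist(\bar s,K_d)$. Moreau's decomposition with respect to $K_p$, whose polar is $-K_d$, gives $\bar s=-P_{K_p}(-\bar s)+P_{K_d}(\bar s)$, so $\dist(\bar s,K_d)=\|p\|$ with $p:=P_{K_p}(-\bar s)$. Moreover $p^\top(-\bar s)=\|p\|^2$. Take $\hat y=\bar y$ and $\hat x=\bar x+r\sqrt\tau\,p/\|p\|$. This point is in $K_p$, being a sum of elements of the cone, and its $M$-distance to $\bar z$ is exactly $r$. Its value is $-(\hat x-\bar x)^\top\bar s=r\sqrt\tau\|p\|$, so $\rho\ge\sqrt\tau\,\dist(\bar s,K_d)$.

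\textbf{Item 3.} This is the step needing an extra idea. With $q_0=0$ and $Aq=b$, we get $q^\top\bar s=q^\top c-b^\top\bar y=-b^\top\bar y$. Hence $\gap(\bar w)=c^\top\bar x-b^\top\bar y=L(\bar x,0)-L(0,\bar y)$, which is the gap function evaluated at the test point $\hat z=0$; this point is admissible since $0\in K_p$, and $\|0-\bar z\|_M=\|\bar z\|_M$.

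If $\|\bar z\|_M\le r$, then $0\in B(r;\bar z)$ and $\gap(\bar w)\le r\rho(r;\bar z)$.

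Otherwise set $R:=\|\bar z\|_M>r$. The main obstacle is showing $\rho(R;\bar z)\le\rho(r;\bar z)$. I would prove it by scaling. For $\hat z\in B(R;\bar z)$, the point $\bar z+(r/R)(\hat z-\bar z)$ lies in $B(r;\bar z)$, since its $x$-part is a convex combination of $\bar x,\hat x\in K_p$. Because the gap function is affine in $\hat z$ and zero at $\bar z$, its value at this shrunken point is exactly $(r/R)$ times its value at $\hat z$. Taking suprema gives $\frac1R\sup_{B(R)}\le\frac1r\sup_{B(r)}$. Then $\gap(\bar w)\le R\rho(R;\bar z)\le R\rho(r;\bar z)$.

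Combining the two cases yields $\gap(\bar w)\le\max\{r,\|\bar z\|_M\}\rho(r;\bar z)$.
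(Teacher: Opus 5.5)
Your proof is correct and follows the paper's argument. Items 1 and 2 use the same test points as the paper: your $p=P_{K_p}(-\bar s)$ is the paper's $-d=-(\bar s-P_{K_d}(\bar s))$, obtained via Moreau's decomposition rather than the projection optimality conditions. For item 3 the paper plugs in the single test point $\bigl(1-\min\{r/\|\bar z\|_M,1\}\bigr)\bar z$ directly, and that is exactly the point your combination of $\hat z=0$ with the scaling argument produces.
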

\noindent The proof of Lemma \ref{lm: convergence of PHDG without restart} is given in Appendix~\ref{app:proof_lm_convergence_pdhg_without_restart}. It follows from Lemma \ref{lm: convergence of PHDG without restart} and the definition of the optimal solution set $\calW^\star$ in \eqref{def_optimal_W} that when the normalized duality gap is $0$, then $\bar{w} \in \calW^\star$. Moreover, if $\max\{ r, \|\bar{z}\|_M\}$ is not too large, the magnitude of $\rho(r;\bar{z})$ also measures how close to optimality the primal-dual solution $\bar{w}$ is. We will show later that under some mild initial point conditions, the magnitude of $\max\{ r, \|\bar{z}\|_M\}$ in PDHG is well-controlled by the distance to the optimal solution set.

For LP instances it is shown in \cite{applegate2023faster} that the normalized duality gap $\rho(r;z)$ can be easily computed or approximated.  In Appendix \ref{appendix:compute_rho} we extend this and show how to compute $\rho(r;z)$ for more general CLP instances.

\subsection{Sublinear convergence of PDHG for \eqref{pro: general saddlepoint clp}}

Let the $k$-th iterate of PDHG be denoted as $z^k$, and the average of the first $k$ iterates be denoted as $\bar{z}^k := \frac{1}{k}\sum_{i=1}^k z^i$. The iterates generated by PDHG satisfy the following desirable distance properties to the set of saddlepoints $\calZ^\star$, as stated in the following lemma.

\begin{lemma}{\bf (Nonexpansive property, essentially Proposition 2 of \cite{applegate2023faster})}\label{lm: nonexpansive property} Suppose that  $\sigma, \tau$ satisfy \eqref{eq:general_stepsize}. For any saddlepoint $z^\star$ of \eqref{pro: general saddlepoint clp}, and for all $k\ge0$, it holds that
	\begin{equation}\label{eq nonexpansive property}
		\| z^{k+1} -z^\star \|_M \le  \|z^k  - z^\star \|_M \ .
	\end{equation}
	Therefore under either the assignment $z:= z^k$ or $z := \bar z^k$ it holds that $
		\left\| z -z^\star\right\|_M \le \left\|z^0  - z^\star\right\|_M$.
\end{lemma}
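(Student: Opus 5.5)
The approach is the standard one for PDHG: show that the PDHG operator is firmly nonexpansive in the $M$-norm with respect to saddlepoints.

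\textbf{Step 1: optimality conditions of the update.} Write the update of Algorithm~\ref{alg: one PDHG} as a variational inclusion. The $x$-step is a projection, so
\[
x^{k+1} = \arg\min_{x\in K_p} \left\{ (c-A^\top y^k)^\top x + \tfrac{1}{2\tau}\|x-x^k\|^2 \right\}.
\]
Its optimality condition is $0\in c - A^\top y^k + \tfrac{1}{\tau}(x^{k+1}-x^k) + N_{K_p}(x^{k+1})$. The $y$-step gives
\[
0 = -b + A(2x^{k+1}-x^k) + \tfrac{1}{\sigma}(y^{k+1}-y^k).
\]
Rearranging, and using $A^\top y^k = A^\top y^{k+1} - A^\top(y^{k+1}-y^k)$ and $2x^{k+1}-x^k = x^{k+1} + (x^{k+1}-x^k)$, these combine into
\[
-M(z^{k+1}-z^k) \in F(z^{k+1}) := \begin{pmatrix} c - A^\top y^{k+1} + N_{K_p}(x^{k+1}) \\ Ax^{k+1}-b \end{pmatrix}.
\]
This works because the off-diagonal blocks of $M$ are exactly $A^\top$ and $A$, which absorb the extrapolation and lagged terms.

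\textbf{Step 2: monotonicity.} The operator $F$ is the sum of a skew linear map and $N_{K_p}\times\{0\}$, so it is maximal monotone. A saddlepoint satisfies $0\in F(z^\star)$. Monotonicity then gives $\langle -M(z^{k+1}-z^k),\, z^{k+1}-z^\star\rangle \ge 0$.

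\textbf{Step 3: the norm identity.} Apply the polarization identity in the $M$-semi-inner product:
\[
\|z^k-z^\star\|_M^2 = \|z^{k+1}-z^\star\|_M^2 + \|z^{k+1}-z^k\|_M^2 + 2\langle z^{k+1}-z^k,\, z^{k+1}-z^\star\rangle_M .
\]
The last term is nonnegative by Step 2, and $\|z^{k+1}-z^k\|_M^2\ge0$ since $M\succeq0$ under \eqref{eq:general_stepsize}. This yields \eqref{eq nonexpansive property}.

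\textbf{Step 4: the corollary for $z^k$ and $\bar z^k$.} For $z:=z^k$, induction on $k$ gives $\|z^k-z^\star\|_M\le\|z^0-z^\star\|_M$. For $z:=\bar z^k$, note that $\|\cdot\|_M$ is a seminorm, hence convex. Therefore
\[
\|\bar z^k - z^\star\|_M \le \tfrac1k\sum_{i=1}^k \|z^i-z^\star\|_M \le \|z^0-z^\star\|_M .
\]

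\textbf{Main obstacle.} The only delicate point is the bookkeeping in Step 1: verifying that the cross terms come out with the correct signs so that exactly $M$, rather than a nonsymmetric matrix, appears. Only $M\succeq0$ is needed, not positive definiteness, because the identity in Step 3 holds for any semi-inner product.
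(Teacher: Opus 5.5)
Your argument is correct. It is the standard preconditioned proximal-point argument: PDHG is written as the inclusion $0\in F(z^{k+1})+M(z^{k+1}-z^k)$ with $F$ monotone, which is the argument behind Proposition~2 of \cite{applegate2023faster}. The paper simply cites that result rather than reproving it, and your Step~4 is the convexity step the paper leaves implicit.

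There is one sign slip in Step~3. Writing $z^k-z^\star=(z^{k+1}-z^\star)-(z^{k+1}-z^k)$, the cross term is $-2\langle z^{k+1}-z^k,\,z^{k+1}-z^\star\rangle_M = 2\langle -M(z^{k+1}-z^k),\,z^{k+1}-z^\star\rangle$, and this is exactly what Step~2 shows is nonnegative. The term $+2\langle z^{k+1}-z^k,\,z^{k+1}-z^\star\rangle_M$ that you displayed is actually nonpositive. So your identity and your sign claim are each wrong as written, but the two errors cancel, and the conclusion stands once corrected.
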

\noindent Lemma \ref{lm: nonexpansive property} is essentially a restatement of Proposition 2 in \cite{applegate2023faster}. The inequality \eqref{eq nonexpansive property}, also known as the nonexpansive property, appears in many other operator splitting methods \cite{liang2016convergence,ryu2022large,applegate2023faster}. We also will make use of the following lemma, which is a restatement of Lemma 2.5 of \cite{xiong2023computational}.

\begin{lemma}{\bf (from Lemma 2.5 of \cite{xiong2023computational})}\label{lm: R in the opt gap convnergence} Suppose that $\tau$ and $\sigma$ satisfy \eqref{eq:general_stepsize}, and suppose $z^a$, $z^b$, and $z^c$ satisfy the nonexpansive properties:  $\|z^b - z^\star\|_M \le \| z^{a} - z^\star\|_M$ and $\|z^c - z^\star\|_M \le \| z^{a} - z^\star\|_M$ for every $z^\star \in \calZ^\star$. Then
	\begin{equation}\label{eq of lm: R in the opt gap convnergence}
		\begin{aligned}
			\max\{ \|z^b - z^c\|_M, \|z^b\|_M\}  \le & \ 2 \dist_M(z^a,\calZ^\star)  + \|z^{a} \|_M	\ .
		\end{aligned}
	\end{equation}
\end{lemma}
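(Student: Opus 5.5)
The proof will be two applications of the triangle inequality in the $M$-(semi)norm, using only the hypotheses on $z^a,z^b,z^c$. Since $\calZ^\star$ is closed (it is $\calX^\star\times\calY^\star$, with $\calX^\star$ closed and $\calY^\star$ the preimage of the closed set $\calS^\star$ under an affine map), I first fix a saddlepoint $z^\star\in\calZ^\star$ attaining $\|z^a-z^\star\|_M=\dist_M(z^a,\calZ^\star)$. If attainment fails because $\|\cdot\|_M$ is only a seminorm, I instead take $z^\star$ with $\|z^a-z^\star\|_M\le \dist_M(z^a,\calZ^\star)+\eta$ and let $\eta\searrow 0$ at the end. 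The hypotheses then give $\|z^b-z^\star\|_M\le\|z^a-z^\star\|_M$ and $\|z^c-z^\star\|_M\le\|z^a-z^\star\|_M$.

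\textbf{Bound on $\|z^b-z^c\|_M$.} By the triangle inequality,
\[
\|z^b-z^c\|_M\le \|z^b-z^\star\|_M+\|z^\star-z^c\|_M\le 2\|z^a-z^\star\|_M = 2\dist_M(z^a,\calZ^\star),
\]
which is at most the right-hand side of \eqref{eq of lm: R in the opt gap convnergence}, since $\|z^a\|_M\ge 0$.

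\textbf{Bound on $\|z^b\|_M$.} Again by the triangle inequality,
\[
\|z^b\|_M\le \|z^b-z^\star\|_M+\|z^\star\|_M\le \|z^a-z^\star\|_M+\|z^\star-z^a\|_M+\|z^a\|_M = 2\dist_M(z^a,\calZ^\star)+\|z^a\|_M .
\]
Combining the two bounds gives \eqref{eq of lm: R in the opt gap convnergence}.

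The only potential obstacle is the attainment of the distance. This is handled by the $\eta$-approximation described above, since both bounds are continuous in $\eta$. The triangle inequality itself holds for seminorms, so positive semidefiniteness of $M$, guaranteed by \eqref{eq:general_stepsize}, is all that is needed.
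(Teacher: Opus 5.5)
Your proof is correct and follows essentially the same route as the cited Lemma 2.5 of \cite{xiong2023computational}, which this paper restates without proof. That route is to fix a saddlepoint $z^\star$ nearest to $z^a$ in the $M$-seminorm, then apply the triangle inequality once to bound $\|z^b-z^c\|_M$ and twice to bound $\|z^b\|_M$; your $\eta$-approximation is a harmless safeguard for the attainment that the paper's $\min$-based definition of $\dist_M$ takes for granted when $M$ is singular.
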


\noindent Using Lemma \ref{lm: nonexpansive property}, the sublinear convergence of the normalized duality gap has been demonstrated in \cite{applegate2023faster,xiong2023computational}, among others. Here we directly present a restatement of Lemma 2.2 of \cite{xiong2023computational}, which was initially developed for LP but in fact holds more broadly for conic optimization problems.
\begin{lemma}{\bf (Sublinear convergence of PDHG, from Lemma 2.2 of \cite{xiong2023computational})}\label{lm: original sublinear PDHG}
	Suppose that  $\sigma, \tau$ satisfy \eqref{eq:general_stepsize}. Then for any $z^0 := (x^0,y^0)$ with $x^0 \in K_p$, it holds for all $k \ge 1$ that
	\begin{equation}
		\rho(\|\bar{z}^k-z^0\|_M;\bar{z}^k) \le \frac{4\dist_M(z^0,\calZ^\star)}{k} \ .
	\end{equation}
\end{lemma}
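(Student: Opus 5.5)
The plan is the standard ergodic analysis of PDHG (in the style of Chambolle--Pock), followed by a localization step that uses the nonexpansive property of Lemma~\ref{lm: nonexpansive property}.

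\textbf{Step 1: one-step inequality.} Fix any $\hat z=(\hat x,\hat y)$ with $\hat x\in K_p$. The projection in line~\ref{line:update_x} of Algorithm~\ref{alg: one PDHG} gives the variational inequality
$$\bigl(x^{k+1}-x^k+\tau(c-A^\top y^k)\bigr)^\top(\hat x-x^{k+1})\ge 0 .$$
The dual update in line~\ref{line:update_y} is the exact optimality condition
$$\tfrac1\sigma(y^{k+1}-y^k)-\bigl(b-A(2x^{k+1}-x^k)\bigr)=0 .$$
Take the inner product of the second relation with $\hat y-y^{k+1}$, add it to the first (divided by $\tau$), and regroup the cross terms $x^\top A^\top y$. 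This should give
$$L(x^{k+1},\hat y)-L(\hat x,y^{k+1})\le (z^{k+1}-z^k)^\top M(\hat z-z^{k+1}).$$
The off-diagonal blocks of $M$ in \eqref{robsummer} are exactly what absorb the extrapolation $2x^{k+1}-x^k$. Applying the three-point identity for the semi-inner product induced by $M$ (valid because $M\succeq0$ under \eqref{eq:general_stepsize}), the right-hand side is at most
$$\tfrac12\|z^k-\hat z\|_M^2-\tfrac12\|z^{k+1}-\hat z\|_M^2 .$$
I expect this bookkeeping, in particular getting the sign of the bilinear terms right so that $M$ appears exactly, to be the main obstacle. The only conic ingredient is the projection inequality, so the LP argument carries over verbatim.

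\textbf{Step 2: averaging.} Sum the one-step inequality over $k=0,\dots,K-1$ and telescope. For fixed $\hat z$, the map $z\mapsto L(x,\hat y)-L(\hat x,y)$ is affine in $z=(x,y)$ because $L$ is bilinear plus linear terms. Jensen's inequality therefore holds with equality for the average, giving
$$L(\bar x^K,\hat y)-L(\hat x,\bar y^K)\le \frac{\|z^0-\hat z\|_M^2}{2K}.$$

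\textbf{Step 3: localization.} Set $r:=\|\bar z^K-z^0\|_M$; if $r=0$ the statement is vacuous or interpreted by continuity, so assume $r>0$. For every $\hat z\in B(r;\bar z^K)$, the triangle inequality gives $\|z^0-\hat z\|_M\le r+r=2r$. Hence the supremum in \eqref{sunny} is at most $4r^2/(2K)$, and so $\rho(r;\bar z^K)\le 2r/K$.

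Finally, let $z^\star$ be an $M$-closest point of $\calZ^\star$ to $z^0$. By Lemma~\ref{lm: nonexpansive property},
$$r\le\|\bar z^K-z^\star\|_M+\|z^\star-z^0\|_M\le 2\|z^0-z^\star\|_M=2\dist_M(z^0,\calZ^\star).$$
Combining the last two bounds gives $\rho\le 4\dist_M(z^0,\calZ^\star)/K$, which is the claim.
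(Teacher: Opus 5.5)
Your proof is correct and follows the standard argument behind the cited Lemma~2.2 of \cite{xiong2023computational}; the paper gives no proof of its own and imports that result. The argument is the Chambolle--Pock ergodic bound $L(\bar x^k,\hat y)-L(\hat x,\bar y^k)\le \|\hat z-z^0\|_M^2/(2k)$, then $\|\hat z-z^0\|_M\le 2r$ on $B(r;\bar z^k)$, then nonexpansiveness to get $r\le 2\dist_M(z^0,\calZ^\star)$. One small fix: $M$ may be singular, so an $M$-closest point of $\calZ^\star$ need not exist; run the last step with an arbitrary $z^\star\in\calZ^\star$ and take the infimum.
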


\noindent Combining the results of Lemma \ref{lm: convergence of PHDG without restart} and Lemma \ref{lm: original sublinear PDHG}, we obtain the following corollary regarding sublinear convergence of PDHG for \eqref{pro: general saddlepoint clp}.
\begin{corollary}\label{thm covergence result}
	Suppose that  $\sigma, \tau$ satisfy \eqref{eq:general_stepsize}, and PDHG is initiated with $z^0 = (x^0,y^0)$, where $x^0\in K_p$. For all $k \ge 1$, let $\bar{s}^k := c - A^\top \bar{y}^k$. Then the following hold for $\bar{w}^k:= (\bar{x}^k,\bar{s}^k)$ for all $k \ge 1$:
	\begin{enumerate}
		\item Distance to the affine subspace: $ \dist(\bar{w}^k,V)  \le \frac{4}{\sqrt{\sigma} \lambda_{\min}}\cdot  \frac{\dist_M(z^0,\calZ^\star)}{k}$, \label{item_conv1}
		\item Distance to the cone: $ \dist(\bar{w}^k,K)  \le \frac{4}{\sqrt{\tau}} \cdot  \frac{\dist_M(z^0,\calZ^\star)}{k}$, and \label{item_conv2}
		\item Duality gap: $\gap(\bar{w}^k)  \le   \left(8\dist_M(z^0,\calZ^\star) + 4\|z^0\|_M\right)\cdot  \frac{\dist_M(z^0,\calZ^\star)}{k}$.\label{item_conv3}
	\end{enumerate}
\end{corollary}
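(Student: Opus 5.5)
The corollary follows by plugging Lemma~\ref{lm: original sublinear PDHG} into Lemma~\ref{lm: convergence of PHDG without restart}. Fix $k\ge1$ and apply Lemma~\ref{lm: convergence of PHDG without restart} with $\bar z:=\bar z^k$ and $r:=\|\bar z^k-z^0\|_M$.

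First we check the hypotheses. Each PDHG iterate satisfies $x^i\in K_p$ because line~\ref{line:update_x} of Algorithm~\ref{alg: one PDHG} is a projection onto $K_p$. Since $K_p$ is convex, the average $\bar x^k$ also lies in $K_p$. The dual slack is $\bar s^k=c-A^\top\bar y^k$, exactly as in Lemma~\ref{lm: convergence of PHDG without restart}.

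There is a degenerate case $r=0$, i.e.\ $\bar z^k=z^0$. The plan is to handle it by a limiting argument: replace $r$ by any $r'>0$ and use monotonicity of $r\mapsto r\rho(r;z)$. Alternatively, one can observe that the bound of Lemma~\ref{lm: original sublinear PDHG} is meant with $\rho$ interpreted at positive radius. I expect this to be only a technicality.

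Lemma~\ref{lm: original sublinear PDHG} gives $\rho(r;\bar z^k)\le 4\dist_M(z^0,\calZ^\star)/k$. Substituting this into items~\ref{item_gap1} and~\ref{item_gap2} of Lemma~\ref{lm: convergence of PHDG without restart} yields items~\ref{item_conv1} and~\ref{item_conv2} of the corollary directly.

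For item~\ref{item_conv3}, we have
\[
\gap(\bar w^k)\le\max\{\|\bar z^k-z^0\|_M,\|\bar z^k\|_M\}\cdot\rho(r;\bar z^k),
\]
so it remains to bound the max. We apply Lemma~\ref{lm: R in the opt gap convnergence} with $z^a:=z^0$, $z^b:=\bar z^k$, $z^c:=z^0$. The required nonexpansive properties hold: $\|\bar z^k-z^\star\|_M\le\|z^0-z^\star\|_M$ for every $z^\star\in\calZ^\star$ by Lemma~\ref{lm: nonexpansive property}, and the property for $z^c=z^0$ is trivial. Lemma~\ref{lm: R in the opt gap convnergence} then gives
\[
\max\{\|\bar z^k-z^0\|_M,\|\bar z^k\|_M\}\le 2\dist_M(z^0,\calZ^\star)+\|z^0\|_M.
\]
Multiplying by $4\dist_M(z^0,\calZ^\star)/k$ gives $(8\dist_M(z^0,\calZ^\star)+4\|z^0\|_M)\dist_M(z^0,\calZ^\star)/k$, which is item~\ref{item_conv3}.

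Nothing here is deep. The only step that needs care is matching the hypotheses: the choice of $z^c=z^0$ in Lemma~\ref{lm: R in the opt gap convnergence}, and the positivity of $r$.
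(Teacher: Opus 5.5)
Your proposal is correct and follows the paper's proof. Items 1 and 2 come from substituting Lemma~\ref{lm: original sublinear PDHG} into Lemma~\ref{lm: convergence of PHDG without restart}. Item 3 applies Lemma~\ref{lm: R in the opt gap convnergence} with $z^a:=z^0$, $z^b:=\bar z^k$, $z^c:=z^0$, then item~\ref{item_gap3} of Lemma~\ref{lm: convergence of PHDG without restart} with $r=\|\bar z^k-z^0\|_M$. Your check that $\bar x^k\in K_p$ is a detail the paper leaves implicit. The paper also does not treat the degenerate case $r=0$; monotonicity of $r\mapsto r\rho(r;z)$ alone would not settle it, and you would need the ergodic estimate underlying Lemma~\ref{lm: original sublinear PDHG}.
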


\proof{Proof.}
	The upper bounds for the distances to the affine subspace $V$ and the cone $K$ follow directly from Lemma \ref{lm: convergence of PHDG without restart} and Lemma \ref{lm: original sublinear PDHG}.  To prove item (\textit{\ref{item_conv3}}.), we apply Lemma \ref{lm: R in the opt gap convnergence} with $z^a := z^0$, $z^b:=\bar{z}^k$ and $z^c:= z^0$, which then satisfy the nonexpansive properties of Lemma \ref{lm: R in the opt gap convnergence}, whereby it holds that
	$$
		\max\{ \|\bar{z}^k - z^0\|_M, \|\bar{z}^k\|_M\}  \le    2 \dist_M(z^0,\calZ^\star)  + \|z^0 \|_M \ .
	$$
	Then item (\textit{\ref{item_conv3}}.) of the corollary follows by applying the above inequality to item (\textit{\ref{item_gap3}}.) of Lemma \ref{lm: convergence of PHDG without restart} with $r = \|\bar{z}^k - z^0\|_M$. \Halmos\endproof

\section{Complexity of restarted-PDHG for CLP}\label{sec:complexity_clp}
In addition to the convergence analysis of PDHG, \cite{applegate2023faster,xiong2023computational} show that fixed-period and/or adaptive restarts lead to faster convergence of PDHG for LP, in both theory and practice. Algorithm \ref{alg: PDHG with restarts} describes our general restart scheme for PDHG for CLP, which is a generalization of Algorithm 1 in \cite{xiong2023computational} for LP. We refer to this algorithm as ``rPDHG'' for ``restarted-PDHG.'' 
\begin{algorithm}[htbp]
	\SetAlgoLined
	{\bf Input:} Initial iterate $z^{0,0}:=(x^{0,0}, y^{0,0})$, $n \gets 0$, and step-sizes $\tau,\sigma$ satisfying \eqref{eq:general_stepsize} \;
	\Repeat{\text{Either $z^{n,0}$ is a saddlepoint or $z^{n,0}$ satisfies some other convergence condition }}{
		\textbf{initialize the inner loop:} inner loop counter $k\gets 0$ \;
		\Repeat{ $\bar{z}^{n,k}$ satisfies some (verifiable) restart condition \ }{
			\textbf{conduct one step of PDHG: }$z^{n,k+1} \gets \textsc{OnePDHG}(z^{n,k})$ \;
			\textbf{compute the average iterate in the inner loop: }$\bar{z}^{n,k+1}\gets\frac{1}{k+1} \sum_{i=1}^{k+1} z^{n,i}$
			\label{line:average} \;  \label{line:output-is-average-of-iterates}
			$k\gets k+1$ \;
		}\label{line:restart_condition}
		\textbf{restart the outer loop:} $z^{n+1,0}\gets \bar{z}^{n,k}$, $n\gets n+1$ \;
	}
	{\bf Output:} $z^{n,0}$ ($ \ = (x^{n,0},  y^{n,0})$)
	\caption{rPDHG: restarted-PDHG}\label{alg: PDHG with restarts}
\end{algorithm}
Here $z^{k+1} \gets \textsc{OnePDHG}(z^k)$ is an iteration of PDHG as described in Algorithm \ref{alg: one PDHG}. For each iterate $z^{n,k} = (x^{n,k},y^{n,k})$, we define $s^{n,k}:= c - A^\top y^{n,k}$ and $\bar{s}^{n,k}:= c - A^\top \bar{y}^{n,k}$, and $\bar{s}^{n,k}$ denotes the average of the dual cone variable iterate values. The double superscript on the variables $z^{n,k}$, $s^{n,k}$, and $\bar{s}^{n,k}$ indexes the outer iteration counter followed by the inner iteration counter, so that $z^{n,k}$ is the $k$-th inner iteration of the $n$-th outer loop.

In order to implement Algorithm \ref{alg: PDHG with restarts} (rPDHG) it is necessary to specify a (verifiable) restart condition on the average iterate $\bar z^{n,k}$ in Line \ref{line:restart_condition} that is used to determine when to re-start PDHG.
We will primarily consider Algorithm \ref{alg: PDHG with restarts} (rPDHG) using the following restart condition in Line \ref{line:restart_condition}:
\begin{equation}\label{catsdogs}\rho(\|\bar{z}^{n,k} - z^{n,0}\|_M; \bar{z}^{n,k}) \le \beta \cdot \rho(\|z^{n,0} - z^{n-1,0}\|_M; z^{n,0}) \ , \end{equation}
for a specific value of $\beta \in (0,1)$ (in fact we will use $\beta = 1/e$ where $e$ is the base of the natural logarithm).  In this way \eqref{catsdogs} is nearly identical to the condition used in \cite{applegate2023faster}. We state this restart condition formally as:
\begin{definition}[$\beta$-restart condition]\label{def beta restart}
	For a given $\beta \in(0,1)$, the iteration $(n,k)$ satisfies the \textit{$\beta$-restart condition} if $n \ge 1$ and condition \eqref{catsdogs} is satisfied, or $n=0$ and $k=1$.
\end{definition}
\noindent Condition \eqref{catsdogs} states that the normalized duality gap shrinks by the factor $\beta$ between restart values $\bar{z}^{n,k}$ and ${z}^{n,0}$. In Appendix \ref{appendix:compute_rho} we show that computing the normalized duality gap can be done efficiently. Also, in practice the restart condition \eqref{catsdogs} does not need to be checked very frequently, so the overall cost of evaluating the restart condition is quite minor.

\begin{remark}[Step-size convention and equivalent reweighted instances]\label{rmk:step-size-vs-reweighting}
Recall the definitions of $\lambda_{\max}$, $\lambda_{\min}$, and $\kappa$ in \eqref{eq  def lamdab min max}. We define the standard step-sizes as follows:
\begin{equation}\label{eq:standard_step_size}
        \tau_0 := \frac{1}{\kappa},
        \qquad
        \sigma_0 := \frac{1}{\lambda_{\max}\lambda_{\min}} \ .
\end{equation}
Then $\tau_0\sigma_0=1/\lambda_{\max}^2$, and hence the standard step-sizes satisfy condition \eqref{eq:general_stepsize} with equality. For a fixed step-size product, changing the ratio between the primal and dual step-sizes is equivalent to applying the original step-sizes to a reweighted instance in which $b$ and $c$ are rescaled while $A$ and $K_p$ are unchanged. The precise equivalence, including the invariance of the restart rule under this reweighting, is given in Appendix~\ref{app:stepsizes_weighted_instances}. Therefore, throughout the main text we state the bounds for the standard step-sizes in \eqref{eq:standard_step_size}. If a different fixed step-size ratio is used, the geometric condition measures should be interpreted as being measured on the corresponding reweighted instance.
\end{remark}

\subsection{Computational guarantees for CLP}

In this subsection we present our computational guarantees for rPDHG, which are based on the three geometry-based condition measures $D_\delta$, $r_\delta$ and $d^H_\delta$ of the level sets $\calW_\delta$. Throughout this subsection, rPDHG uses the $\beta$-restart condition in Definition~\ref{def beta restart}, which is the same adaptive restart scheme introduced in \cite{applegate2023faster} and also used in \cite{xiong2023computational}.

We will state the main computational guarantees in terms of the following two error quantities. \vspace{5pt}

\begin{definition}{(Distance to constraints and duality gap)}\label{def:basic_errors} Let $w=(x,s) \in \mathbb{R}^{2n}$ be given. Define
\begin{enumerate}
	\item (Distance to constraints): $\econs(w):=\max\{\dist(w,V),\dist(w,K)\}$, and
	\item (Duality gap): $\egap(w):=\max\{0,\gap(w)\}$.
\end{enumerate}
\end{definition} 
\vspace{5pt}
\noindent The quantity $\econs(w)$ measures the distances to $V$ and $K$, while $\egap(w)$ is the positive part of the duality gap. A candidate primal-dual pair $w = (x,s) $ is optimal if and only if $\econs(w) = 0$ and $\egap(w) = 0$. In practice, we aim to compute a solution $w$ for which $\econs(w)$ and $\egap(w)$ are both small.
Recalling the definitions of $\lambda_{\max}$, $\lambda_{\min}$, and $\kappa$ from \eqref{eq  def lamdab min max}, we now state our main computational guarantee for Algorithm \ref{alg: PDHG with restarts} (rPDHG).

\begin{theorem}\label{thm overall complexity clp}
	Under Assumption \ref{assump:striclyfeasible}, suppose that $c\in\mathcal{L}$ and Algorithm \ref{alg: PDHG with restarts} (rPDHG) is run starting from $z^{0,0} = (x^{0,0},y^{0,0} ) = (0,0)$ using the $\beta$-restart condition with $\beta := 1/e$, and the standard step-sizes $\sigma=\sigma_0$ and $\tau = \tau_0$ prescribed in \eqref{eq:standard_step_size}.  
	Given $\eps_{\mathrm{cons}},\eps_{\mathrm{gap}}>0$, let $T$ be the total number of \textsc{OnePDHG} iterations that are run in order to obtain $n$ for which $w^{n,0}=(x^{n,0},s^{n,0})$ satisfies $\econs(w^{n,0})\le \eps_{\mathrm{cons}}$ and $\egap(w^{n,0})\le \eps_{\mathrm{gap}}$.
	Then for every $\delta > 0$,
	\begin{equation}\label{eq overall complexity} 
		T \le     99\kappa \, \frac{D_\delta}{r_\delta}  \, \left[\ln\left(17\kappa \, \dist(0,\calW^\star) \left(\tfrac{1}{\eps_{\mathrm{cons}}}\vee\tfrac{2\sqrt{2}\dist(0,\calW^\star)}{ \eps_{\mathrm{gap}}}\right)\right) \right]  
		  + 26\kappa \, d^H_\delta \, \left( \tfrac{1}{\eps_{\mathrm{cons}}} \vee \tfrac{2\sqrt{2}\dist(0,\calW^\star)}{\eps_{\mathrm{gap}}} \right) 	
	\end{equation}
	Let $T_\delta$ denote the right-hand side of \eqref{eq overall complexity}, whereby \eqref{eq overall complexity} implies
	\begin{equation}\label{eq overall complexity 2}
		T \le \inf_{\delta > 0}  \, T_\delta  \, .
	\end{equation}
\end{theorem}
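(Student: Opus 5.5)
The plan is to follow the restart analysis of \cite{applegate2023faster,xiong2023computational}, with the sharpness constant replaced by a \emph{two-regime} growth condition derived from the geometry of $\calW_\delta$. Fix $\delta>0$.

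\textbf{Step 1 (key geometric lemma).} Show that for any $z=(x,y)$ with $x\in K_p$, $s=c-A^\top y$, $w=(x,s)$, and any $r$ in the relevant range,
\begin{equation*}
\dist(w,\calW^\star)\;\lesssim\; \frac{D_\delta}{r_\delta}\,\max\bigl\{\econs(w),\,\egap(w)/\delta\cdot(\text{scale})\bigr\}+d^H_\delta .
\end{equation*}
In words: the distance to optimality is at most a $D_\delta/r_\delta$ multiple of the residuals, plus the additive term $d^H_\delta$. The construction goes as follows.

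First project $w$ onto $V$ to get $\tilde w$. Then form the convex combination $\hat w=(1-\lambda)\tilde w+\lambda w_\delta$ with the conic center $w_\delta$. Because $B(w_\delta,r_\delta)\subseteq K$, choosing $\lambda\approx \dist(\tilde w,K)/(r_\delta+\dist(\tilde w,K))$ puts $\hat w\in K\cap V=\calF$. Its gap is a convex combination of $\gap(\tilde w)$ and $\gap(w_\delta)\le\delta$.

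If $\gap(\hat w)\le\delta$, then $\hat w\in\calW_\delta$, so $\dist(\hat w,\calW^\star)\le d^H_\delta$. Otherwise, shrink further toward a point of $\calW^\star$ (within $D_\delta$ of $w_\delta$) to land in $\calW_\delta$. The movement costs are $\lambda\|\tilde w-w_\delta\|\lesssim (D_\delta/r_\delta)\dist(\tilde w,K)$ plus terms of order $\|w\|$ times the violation. Step 1 is the main obstacle. It requires controlling the norm $\|\tilde w-w_\delta\|$ by $D_\delta$ together with the distance of $w$ to $\calW^\star$, which is bounded by $\dist_M(z^{0,0},\calZ^\star)$ via Lemma~\ref{lm: nonexpansive property}. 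It also requires the $\kappa$ factors from converting between $\|\cdot\|_M$ on $z$ and the Euclidean norm on $w$ under the standard step-sizes: $\|z\|_M$ is comparable to $\|(x,s)\|$ up to factors $\sqrt{\kappa}$, using $\lambda_{\min}$ for $y\mapsto A^\top y$ on the relevant range.

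\textbf{Step 2 (per-restart contraction).} Combine Step 1 with Lemma~\ref{lm: convergence of PHDG without restart}, which bounds $\econs$ and the gap by $\rho$, and with Lemmas~\ref{lm: R in the opt gap convnergence} and \ref{lm: original sublinear PDHG}. Write $\Delta_n:=\dist_M(z^{n,0},\calZ^\star)$. The result is that within each outer loop,
\begin{equation*}
\rho(\|\bar z^{n,k}-z^{n,0}\|_M;\bar z^{n,k})\le \frac{4\Delta_n}{k}, \qquad \Delta_{n+1}\lesssim \kappa\frac{D_\delta}{r_\delta}\rho_{n+1}+\sqrt{\kappa}\,d^H_\delta .
\end{equation*}
The $\beta$-restart condition with $\beta=1/e$ then triggers after at most $O(\kappa D_\delta/r_\delta)$ iterations while $\Delta_n$ is dominated by the first term. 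Once the $d^H_\delta$ term dominates, it triggers after at most $O(\kappa d^H_\delta/\rho_n)$ iterations. Along the way, $\rho_n$ decreases geometrically by the factor $1/e$ per restart.

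\textbf{Step 3 (summing).} The number of restarts needed to drive $\rho_n$ below the target is logarithmic. The target is $\eps_{\mathrm{cons}}\sqrt{\tau_0}$, or $\eps_{\mathrm{gap}}$ divided by $\max\{r,\|z\|_M\}\lesssim\dist(0,\calW^\star)$ via Lemma~\ref{lm: R in the opt gap convnergence} with $z^{0,0}=0$. Lemma~\ref{lm: convergence of PHDG without restart} converts the target back to $\econs\le\eps_{\mathrm{cons}}$ and $\egap\le\eps_{\mathrm{gap}}$, which yields the quantity $\frac{1}{\eps_{\mathrm{cons}}}\vee\frac{2\sqrt2\dist(0,\calW^\star)}{\eps_{\mathrm{gap}}}$ and the initial scale $\rho_1\lesssim\kappa\dist(0,\calW^\star)$.

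Summing the first-regime costs gives $99\kappa(D_\delta/r_\delta)\ln(\cdot)$. The second-regime costs $\kappa d^H_\delta/\rho_n$ form a geometric series dominated by the last term, which gives $26\kappa d^H_\delta(\cdots)$.

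Since $\delta>0$ was arbitrary, taking the infimum gives \eqref{eq overall complexity 2}. Explicit constants come from tracking $\beta=1/e$ and the factors $4$ and $2$ in the lemmas.
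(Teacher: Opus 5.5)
\paragraph{A genuine gap.} Your restart bookkeeping in Steps 2--3 is essentially the paper's Theorem~\ref{thm: complexity of PDHG with adaptive restart}. The gap is in Step 1: it does not deliver the inequality you feed into that bookkeeping, namely $\dist_M(z^{n,0},\calZ^\star)\lesssim\kappa\frac{D_\delta}{r_\delta}\rho_n+\sqrt{\kappa}\,d^H_\delta$.

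For points with $\gap(w)>\delta$, the error bound must involve the gap. Its natural form, Lemma~\ref{lm efeas bound dist}, is $\dist(w,\calW^\star)\le\frac{3D_\delta}{r_\delta}\econs(w)+\frac{d^H_\delta}{\delta}\gap(w)$. At a restart point the gap is controlled only through $\egap(w^{n,0})\le 2\sqrt2\, c_0\dist(0,\calW^\star)\rho_n$. After the norm conversion, this term therefore contributes $4\kappa\,\frac{d^H_\delta\,\dist(0,\calW^\star)}{\delta}\,\rho_n$.

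Absorbing this into $O(\kappa D_\delta/r_\delta)\rho_n$ needs an ingredient your plan never supplies: $\dist(0,\calW^\star)\le\max_{w\in\calW^\star}\|w\|\le\delta/r_\delta$, together with $d^H_\delta\le D_\delta$. This is Lemma~\ref{lm r_delta over delta}. It also follows directly from $x^{\star\top}s_\delta+x_\delta^\top s^\star=\gap(w^\star)+\gap(w_\delta)\le\delta$ and $B(w_\delta,r_\delta)\subseteq K$, which give $r_\delta(\|x^\star\|+\|s^\star\|)\le\delta$. Your ``(scale)'' placeholder hides exactly this step. Without it, the coefficient of the logarithm carries an extra $\kappa\, d^H_\delta\dist(0,\calW^\star)/\delta$, which is not bounded by $\kappa D_\delta/r_\delta$ in general.

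\paragraph{The proposed fix for the ``main obstacle'' in Step 1 also fails.} With the standard step-sizes, $\tau_0\sigma_0=1/\lambda_{\max}^2$, so $M$ is singular. Only the one-sided bound $\dist_M(z,\calZ^\star)\le\sqrt2c_0\dist(w,\calW^\star)$ of Lemma~\ref{lm change of norm} holds. Nonexpansiveness in the $M$-seminorm therefore gives no Euclidean control of $\dist(w^{n,0},\calW^\star)$. Even if it did, it would put a quantity like $\dist(0,\calW^\star)/r_\delta$ into the rate instead of $D_\delta/r_\delta$.

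No such control is needed. Whenever your point $\hat w$ lies in $\calW_\delta$, you have $\hat w-w_\delta=(1-\lambda)(\tilde w-w_\delta)$, so
$\|\tilde w-\hat w\|=\frac{\lambda}{1-\lambda}\|\hat w-w_\delta\|\le\frac{\dist(\tilde w,K)}{r_\delta}D_\delta$.
This is the error bound of Lemma~\ref{lm error bound R r}, which the paper proves via the boundary point of the segment and a supporting hyperplane.

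When $\gap(w)>\delta$, $\hat w$ need not lie in $\calW_\delta$, so moving toward $w_\delta$ first has a cost not controlled by $D_\delta$. Reverse the order: first scale $w$ toward $w^\star=P_{\calW^\star}(w)$ by the factor $u=\delta/\gap(w)$. This multiplies $\gap$ and $\dist(\cdot,\calW^\star)$ by exactly $u$, and $\dist(\cdot,V)$ and $\dist(\cdot,K)$ by at most $u$. Apply the $\gap\le\delta$ case and divide by $u$; this yields the $\frac{d^H_\delta}{\delta}\gap(w)$ term above.

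With both repairs you obtain $\condG=8.25\,\kappa D_\delta/r_\delta$ and $\condC=\sqrt{2\kappa}\,d^H_\delta$, and your summation then goes through.
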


Theorem \ref{thm overall complexity clp} is the formal version of Main Result~\ref{mainres:clp}. It is slightly more general than Main Result~\ref{mainres:clp} in that it allows for separate values of the feasibility tolerance $\eps_{\mathrm{cons}}$ and the duality-gap tolerance $\eps_{\mathrm{gap}}$.  The inverse feasibility tolerance enters directly, while the inverse duality-gap tolerance is scaled by $\dist(0,\calW^\star)$. Thus, for example, if $\dist(0,\calW^\star)$ is small (namely, if the optimal primal and dual optimal variables have small norm), then the impact of $\eps_{\mathrm{gap}}$ on the complexity bound is less pronounced than it would be if $\dist(0,\calW^\star)$ is large.
 
Let us now examine the inverse tolerance dependence in \eqref{eq overall complexity}. Ignoring absolute constants, for any $\delta >0$ the inverse tolerance term $\frac{1}{\eps_{\mathrm{cons}}}\vee \frac{\dist(0,\calW^\star)}{\eps_{\mathrm{gap}}}$ appears in the logarithm term and the right-most term, which we will call the linear term. When the target tolerance is not very small, it is possible for the logarithm term to be the dominant term, while when the target tolerance is very small the linear term will be the dominant term.  Note that the notion of ``very small'' will depend on the relative magnitudes of $\frac{D_\delta}{r_\delta}$ and $d^H_\delta$. 

Observe in \eqref{eq overall complexity 2} that $T$ is bounded above by the \underline{smallest} $T_\delta$ over all $\delta >0$ (namely, over all level sets $\calW_\delta$). Thus the bound depends on the level set that yields the smallest upper bound for the target tolerance. As discussed after Main Result~\ref{mainres:clp}, the constant in front of the linear term decreases to $0$ as $\delta$ goes to $0$, but the constant in front of the logarithm term might go to a constant or might go to $+\infty$. When $\liminf_{\delta\searrow0}D_\delta/r_\delta$ is finite, then Theorem \ref{thm overall complexity clp} implies the following linear convergence result.

\begin{corollary}\label{cor: linear convergence complexity clp}
	In the setting of Theorem \ref{thm overall complexity clp}, the total number of iterations $T$ is bounded above by
	$99\kappa \, \left(\lim\inf_{\delta \searrow 0}\frac{D_\delta}{r_\delta} \right) \, \ln\left(17\kappa\, \dist(0,\calW^\star) \Big(\frac{1}{\eps_{\mathrm{cons}}}\vee\frac{2\sqrt{2}\dist(0,\calW^\star)}{ \eps_{\mathrm{gap}}}\Big) \right)$.
\end{corollary}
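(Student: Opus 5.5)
The plan is to derive Corollary~\ref{cor: linear convergence complexity clp} directly from Theorem~\ref{thm overall complexity clp} by taking limits along a suitable sequence $\delta_j\searrow0$. First, if $\liminf_{\delta\searrow0}D_\delta/r_\delta=+\infty$ the bound is vacuous, so we assume it equals some finite $\gamma$. By the definition of $\liminf$, we can choose a sequence $\delta_j\searrow0$ with $D_{\delta_j}/r_{\delta_j}\to\gamma$. Theorem~\ref{thm overall complexity clp} applied with $\delta=\delta_j$ gives $T\le T_{\delta_j}$ for every $j$. Here $T$ does not depend on $\delta$, and neither does the logarithmic factor nor the factor $\frac{1}{\eps_{\mathrm{cons}}}\vee\frac{2\sqrt2\dist(0,\calW^\star)}{\eps_{\mathrm{gap}}}$ multiplying $d^H_\delta$.

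Next, the key step is to show that $d^H_\delta\to0$ as $\delta\searrow0$. Let $\bar\delta>0$. Under Assumption~\ref{assump:striclyfeasible}, $\calW_{\bar\delta}$ is compact, since $D_{\bar\delta}<\infty$ (see the remark after Assumption~\ref{assump:striclyfeasible}). The sets $\calW_\delta$ for $\delta\le\bar\delta$ are closed, nested, and have intersection $\calW_0=\calW^\star$. Now argue by contradiction: suppose $d^H_{\delta_j}\ge\eta>0$ along some $\delta_j\searrow0$. Then there are $w_j\in\calW_{\delta_j}$ with $\dist(w_j,\calW^\star)\ge\eta$. By compactness a subsequence converges to some $\bar w$. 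Since $\calF$ is closed and $\gap$ is continuous (it is linear), $\bar w\in\calF$ and $\gap(\bar w)\le0$, which gives $\bar w\in\calW^\star$. This contradicts $\dist(\bar w,\calW^\star)\ge\eta$, because distance to a set is continuous. Hence $d^H_\delta\to0$.

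Finally, let $j\to\infty$ in $T\le T_{\delta_j}$. The first term converges to $99\kappa\gamma$ times the $\delta$-independent logarithm, and the second term tends to $0$. This yields exactly the stated bound. The only real obstacle is the limit $d^H_\delta\to0$, which rests on the compactness of some level set. If one worries that the logarithm could be negative (for very loose tolerances), the theorem's bound is still valid as stated, and the limit argument is unaffected.
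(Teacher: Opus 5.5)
Your proposal is correct and takes the same route as the paper. The paper states the corollary as an immediate consequence of Theorem~\ref{thm overall complexity clp}: let $\delta\searrow0$ along a sequence attaining the $\liminf$, and the linear term vanishes because $d^H_\delta\searrow0$, which the paper only calls ``simple to see.'' Your compactness argument, using boundedness of a fixed level set under Assumption~\ref{assump:striclyfeasible} together with closedness of $\calF$ and continuity of $\gap$, is a valid way to fill in that step.
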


Corollary \ref{cor: linear convergence complexity clp} is controlled by the limiting local geometry of the level sets near $\calW^\star$. One way to visualize this is to look at a low-dimensional slice around a unique optimal solution of an LP. For sufficiently small $\delta$, the level set $\calW_\delta$ may resemble a slice of the cone of feasible directions pointed from $w^\star$. This is shown conceptually in Figure \ref{fig:three-instances} which illustrates several possible scenarios of level sets $\calW_\delta$ (yellow regions) for feasible regions (gray regions) truncated within small-gap hyperplanes.  In scenario 1 the level set is a narrow slice and thus has a very small $r_\delta$. In scenario 3, the angle between the feasible region and the hyperplane $\gap(w)=0$ is very small, which leads to a very large value of $D_\delta$. Both of these scenarios lead to an unfavorable limiting ratio $\lim\inf_{\delta \searrow 0}D_\delta/r_\delta$. In scenario 2 the level set has favorable geometry and the limiting ratio is not large. (Note that the balls in these pictures may not be tangent to the cone-like geometry because the figure only shows a slice of the full geometry.)

\begin{figure}[h]
    \centering
    \begingroup
    \setlength{\fboxsep}{0pt}   
    \setlength{\fboxrule}{0.5pt} 
    \fbox{%
        \includegraphics[
            width=\dimexpr\linewidth-2\fboxrule\relax
        ]{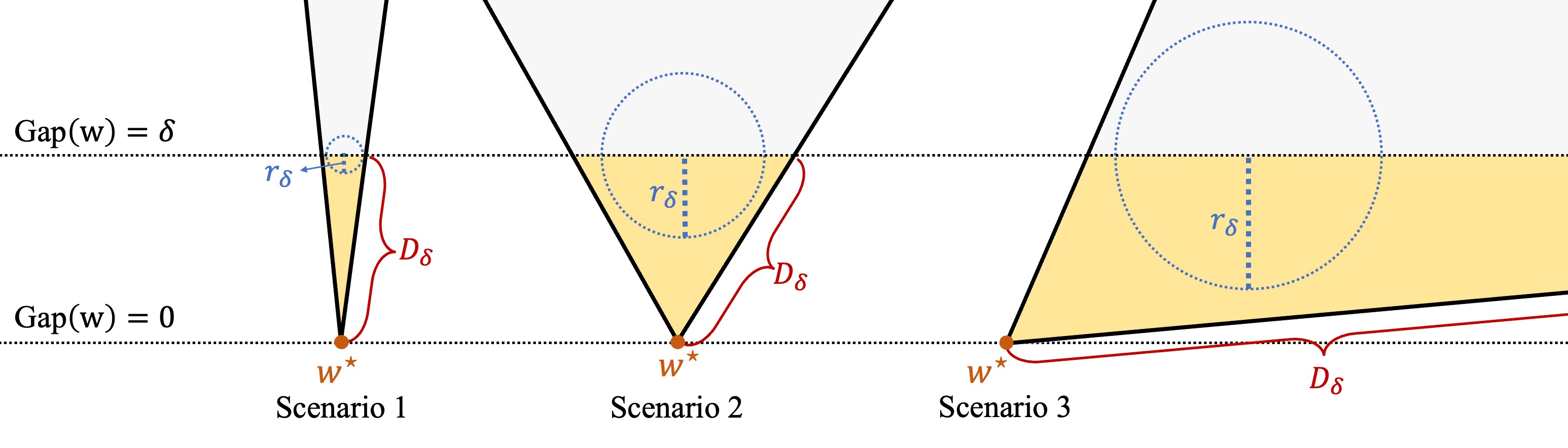}%
    }
    \endgroup
    \caption{Some local scenarios of the level set $\calW_\delta$ (yellow regions) that have favorable or unfavorable limiting ratio $\lim\inf_{\delta \searrow 0}D_\delta/r_\delta$ near a unique optimal solution $w^\star$.}
    \label{fig:three-instances}
\end{figure}

Corollary \ref{cor: linear convergence complexity clp} first appeared in the unpublished manuscript \cite{xiong2024role}. As part of follow-on research, Xiong in \cite{xiong2024accessible} has reported experimental evidence that this bound is consistent with computational practice.

Finally, Theorem \ref{thm overall complexity clp} is stated for the standard step-sizes $\sigma_0$ and $\tau_0$ prescribed in \eqref{eq:standard_step_size}. If one wishes to use a different ratio between $\sigma$ and $\tau$, the same guarantee can be obtained by applying this result to the corresponding reweighted instance; see Remark \ref{rmk:step-size-vs-reweighting}.

\subsection{A general complexity bound property for rPDHG}\label{subsec:abstract_restart_bound}

With the ultimate goal of proving Theorem \ref{thm overall complexity clp}, in this short subsection we first present a general complexity bound property for rPDHG in Theorem \ref{thm: complexity of PDHG with adaptive restart}. Essentially, Theorem \ref{thm: complexity of PDHG with adaptive restart} presents a bound on the iterations of rPDHG if for all iterates their $M$-distance to the saddlepoint set can be bounded by an affine function of the normalized duality gap used in the restart rule, which is formally stated in \eqref{eq restart L C condition}.  After proving Theorem \ref{thm: complexity of PDHG with adaptive restart} below, the proof of Theorem \ref{thm overall complexity clp} will then proceed by verifying that \eqref{eq restart L C condition} holds for rPDHG.  

\begin{theorem}\label{thm: complexity of PDHG with adaptive restart}Under Assumption \ref{assump:striclyfeasible}, suppose that Algorithm \ref{alg: PDHG with restarts} (rPDHG) is run with step-sizes $\sigma$ and $\tau$ satisfying \eqref{eq:general_stepsize}, starting from $z^{0,0} = (x^{0,0},y^{0,0} )$ with $x^{0,0}\in K_p$, using the $\beta$-restart condition with $\beta := 1/e$. Suppose further that there exist constants $\condG \ge 1$ and $ \condC \ge 0$ such that
	\begin{equation}\label{eq restart L C condition}
		\dist_M(z^{n,0}, \calZ^\star) \le \rho(\|z^{n,0} - z^{n-1,0} \|_M;z^{n,0}) \cdot \condG + \condC \
	\end{equation}
	holds for all $n \ge 1$. Let $\eps > 0$ be given and let $T$ be the total number of \textsc{OnePDHG} iterations that are run in order to obtain the first outer iteration $N$ that satisfies $\rho(\|{z}^{N,0} - z^{N-1,0}\|_M; {z}^{N,0})\le \eps$.  Then

	\begin{equation}\label{eq all complexity 6}
		T \ \le \ 12\condG  \cdot \ln\left(\frac{12 \dist_M(z^{0,0},\calZ^\star)}{\eps}\right)  +  \frac{18\condC}{\eps} \ .
	\end{equation}
\end{theorem}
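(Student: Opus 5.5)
The proof will bound the length of each inner loop and then sum over the outer loops. Write $\rho_n := \rho(\|z^{n,0}-z^{n-1,0}\|_M; z^{n,0})$ for $n\ge1$.

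\textbf{Step 1: bounding one inner loop.} Fix an outer iteration $n\ge1$. By Lemma~\ref{lm: original sublinear PDHG} applied with starting point $z^{n,0}$, after $k$ inner iterations
\[
\rho(\|\bar z^{n,k}-z^{n,0}\|_M;\bar z^{n,k}) \le \frac{4\dist_M(z^{n,0},\calZ^\star)}{k} \le \frac{4(\condG\rho_n+\condC)}{k},
\]
where the second inequality uses \eqref{eq restart L C condition}. The $\beta$-restart condition \eqref{catsdogs} requires the left side to be at most $\rho_n/e$. So it is guaranteed to trigger once $k \ge 4e(\condG\rho_n+\condC)/\rho_n = 4e\condG + 4e\condC/\rho_n$. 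Hence the length of loop $n$ is at most
\[
k_n \le 4e\condG + \frac{4e\condC}{\rho_n} + 1 \le 12\condG + \frac{11\condC}{\rho_n},
\]
using $\condG\ge1$ to absorb the $+1$. Loop $0$ has length $1$ by definition.

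\textbf{Step 2: geometric decay.} The restart rule gives $\rho_{n+1}\le \rho_n/e$ for all $n\ge1$. For the number of outer loops, the first $N$ with $\rho_N\le\eps$ satisfies $\rho_n>\eps$ for $n<N$. This gives $N-1 \le \ln(\rho_1/\eps)$.

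I need an upper bound on $\rho_1$. Apply Lemma~\ref{lm: original sublinear PDHG} with $k=1$ to get $\rho_1\le 4\dist_M(z^{0,0},\calZ^\star)$. Therefore $N \le 1+\ln(4\dist_M(z^{0,0},\calZ^\star)/\eps)$, which is at most $\ln(12\dist_M(z^{0,0},\calZ^\star)/\eps)$ since $\ln 3>1$. One small point: if $\rho_1\le\eps$ already, then $N=1$ and the bound is trivial. This may require the log to be positive, which I would handle by a case remark.

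\textbf{Step 3: summing.} We have $T = 1+\sum_{n=1}^{N-1} k_n$, since the loops counted are those producing $z^{1,0},\dots,z^{N,0}$. The $\condG$ part contributes at most $12\condG\,(N-1)$, plus the initial $1$, which is absorbed into $12\condG\ln(12\cdots)$ using $\condG\ge 1$.

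For the $\condC$ part, the decay from Step 2 gives, for $n\le N-1$,
\[
\rho_n \ge e^{\,N-1-n}\rho_{N-1} > e^{\,N-1-n}\eps .
\]
Hence
\[
\sum_{n=1}^{N-1}\frac{1}{\rho_n} \le \frac{1}{\eps}\sum_{j\ge0}e^{-j} = \frac{e}{(e-1)\eps} \approx \frac{1.582}{\eps}.
\]
Multiplying by $11\condC$ gives about $17.4\,\condC/\eps \le 18\condC/\eps$. This yields \eqref{eq all complexity 6}.

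The main obstacle is the constant bookkeeping: the off-by-one in indexing loop $n$ versus $\rho_n$, and the bound on $\rho_1$. If the constants turn out tight, I would adjust the absorption of the $+1$ terms accordingly.
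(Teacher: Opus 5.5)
Your plan follows the paper's proof step for step. Both use the per-loop bound from Lemma~\ref{lm: original sublinear PDHG} plus \eqref{eq restart L C condition}, the decay $\rho_{n+1}\le\rho_n/e$, the bound $\rho_1\le 4\dist_M(z^{0,0},\calZ^\star)$, and a geometric series for $\sum 1/\rho_n$. However, Step~2 contains an off-by-one error.

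From $\rho_n\le e^{-(n-1)}\rho_1$ and $\rho_{N-1}>\eps$ you only get $N-2<\ln(\rho_1/\eps)$. That is, $N-1\le\lceil\ln(\rho_1/\eps)\rceil<1+\ln(\rho_1/\eps)$, not $N-1\le\ln(\rho_1/\eps)$. For example, if $\rho_1=2\eps$ then $N=2$, while $\ln(\rho_1/\eps)=\ln 2<1$. The correct count is therefore $N<2+\ln(4\dist_M(z^{0,0},\calZ^\star)/\eps)$, which is exactly the paper's \eqref{eq all complexity 3}. Your claim $N\le\ln(12\dist_M(z^{0,0},\calZ^\star)/\eps)$ does not follow from the facts you use; they only give $4e^2\approx 29.6$ in place of $12$.

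The theorem still follows with your constants, but the slack is used up. In Step~3,
\[
1+12\condG(N-1) < 1+12\condG+12\condG\ln\!\Big(\tfrac{4\dist_M(z^{0,0},\calZ^\star)}{\eps}\Big)\le 12\condG\ln\!\Big(\tfrac{4e^{13/12}\dist_M(z^{0,0},\calZ^\star)}{\eps}\Big),
\]
using $\condG\ge1$, and $4e^{13/12}\approx 11.82\le 12$. This is precisely the paper's final absorption $13\condG\le12\condG\ln(e^{13/12})$. Your $\condC$-term, $11\cdot\tfrac{e}{e-1}\approx17.4\le18$, is fine.

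Your aside that the case $\rho_1\le\eps$ is ``trivial'' is also not right. In that case $N=1$ and $T=1$, while the right-hand side of \eqref{eq all complexity 6} is negative when, e.g., $\condC=0$ and $\eps>12\dist_M(z^{0,0},\calZ^\star)$. No case remark can repair this, because the statement tacitly requires $\eps$ to be small relative to $\dist_M(z^{0,0},\calZ^\star)$. The paper's proof makes the same tacit assumption:
\begin{itemize}
\item its bound \eqref{eq all complexity 3} fails when $\ln(4\dist_M(z^{0,0},\calZ^\star)/\eps)<-1$;
\item it replaces $1+4\condG/\beta$ by $12\condG$ in front of a logarithm of unknown sign.
\end{itemize}
When $N\ge2$, which forces $\eps<\rho_1\le4\dist_M(z^{0,0},\calZ^\star)$, your argument with the corrected count is complete.
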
 
\proof{Proof.}
	We first derive an upper bound $k_n$ on the number of iterations $k$ between two consecutive restarts $z^{n, 0}$ and $z^{n+1, 0}$.  For $ n =0$, it follows trivially from Definition \ref{def beta restart} that $k_0 := 1$. For $n \ge 1$ and $k \ge 1$ it holds from Lemma \ref{lm: original sublinear PDHG} that
	\begin{equation}\label{eq thm: complexity of PDHG with adaptive restart 1}
		\rho(\|\bar{z}^{n,k}-z^{n,0}\|_M;\bar{z}^{n,k}) \le \frac{4\dist_M(z^{n,0} ,\calZ^\star)}{k}\ .
	\end{equation}
	If $\rho(\|z^{n,0}-z^{n-1,0}\|_M; z^{n,0}) = 0$ it follows from Lemma \ref{lm: convergence of PHDG without restart} that $z^{n,0} \in \calZ^\star$ which then implies that $z^{n,k} = z^{n,0}$ for all $k\ge 1$, and so in particular $k_n = 1$.  If $\rho(\|z^{n,0}-z^{n-1,0}\|_M; z^{n,0}) \neq 0$, then dividing both sides of \eqref{eq thm: complexity of PDHG with adaptive restart 1} by $\rho(\|z^{n,0}-z^{n-1,0}\|_M; z^{n,0})$ yields:
	\begin{equation}\label{eq thm: complexity of PDHG with adaptive restart 2}
		\frac{	\rho(\|\bar{z}^{n,k}-z^{n,0}\|_M;\bar{z}^{n,k})}{\rho(\|z^{n,0}-z^{n-1,0}\|_M; z^{n,0})} \le \frac{4}{k}  \cdot 	\frac{\dist_M(z^{n,0} ,\calZ^\star)}{\rho(\|z^{n,0}-z^{n-1,0}\|_M; z^{n,0})}  \ .
	\end{equation}
	Furthermore, \eqref{eq restart L C condition} implies:
	\begin{equation}\label{miggie}
		\frac{\dist_M(z^{n,0} ,\calZ^\star)}{\rho(\|z^{n,0}-z^{n-1,0}\|_M; z^{n,0})} \le  \condG + \frac{\condC}{\rho(\|z^{n,0} - z^{n-1,0} \|_M;z^{n,0})} \ .
	\end{equation}
	Let us define: $ \bar k_n :=    \frac{4}{\beta} \cdot \left(
		\condG + \frac{\condC}{\rho(\|z^{n,0} - z^{n-1,0} \|_M;z^{n,0})}
		\right) $.
	It then follows from \eqref{eq thm: complexity of PDHG with adaptive restart 2} and \eqref{miggie} that
	the restart condition \eqref{catsdogs} is satisfied for all $k \ge \bar k_n$, whereby
	\begin{equation}\label{eq restart iteration condition}k_n := \frac{4}{\beta} \cdot \left(
		\condG + \frac{\condC}{\rho(\|z^{n,0} - z^{n-1,0} \|_M;z^{n,0})}
		\right)  + 1  \end{equation} is an upper bound on the number of iterations between the consecutive restarts $z^{n, 0}$ and $z^{n+1, 0}$.

	Next we examine the first outer iteration $N$ that satisfies $\rho(\|{z}^{N,0} - z^{N-1,0}\|_M; {z}^{N,0})\le \eps$.  It follows from the $\beta$-restart condition \eqref{catsdogs} for all $n \ge 1$ that
	\begin{equation}\label{eq all complexity 1}
		\rho(\|{z}^{n,0} - z^{n-1,0}\|_M; {z}^{n,0}) \le \beta^{n-1} \cdot \rho(\|{z}^{1,0} - z^{0,0}\|_M; {z}^{1,0}) \ ,
	\end{equation}
	which in combination with Lemma \ref{lm: original sublinear PDHG} (and using $k=1$) yields:
	\begin{equation}\label{eq all complexity 2}
		\rho(\|{z}^{n,0} - z^{n-1,0}\|_M; {z}^{n,0}) \le 4 \beta^{n-1} \cdot \dist_M(z^{0,0}, \calZ^\star) \ .
	\end{equation}
	Let us define $\bar N := 1+ \frac{1}{\ln( 1/\beta) } \cdot \ln\left( \frac{4 \dist_M(z^{0,0}, \calZ^\star)}{ \eps} \right) $.  It follows from \eqref{eq all complexity 2} that $\rho(\|{z}^{n,0} - z^{n-1,0}\|_M; {z}^{n,0}) \le \eps$ for all $n \ge \bar N$, from which it follows that
	\begin{equation}\label{eq all complexity 3}
		N \ \le \  \frac{1}{\ln( 1/\beta) } \cdot \ln\left( \frac{4  \dist_M(z^{0,0}, \calZ^\star)}{ \eps} \right) + 2 \ . 	\end{equation}The total number of \textsc{OnePDHG} iterations $T$ satisfies
	\begin{equation}\label{eq all complexity 4}
		\begin{aligned}
			T \le \sum_{n=0}^{N-1} k_n  = \  & 1  + \sum_{n = 1}^{N-1}\left(
			\frac{4}{\beta} \cdot \left(
				\condG + \frac{\condC}{\rho(\|z^{n,0} - z^{n-1,0} \|_M;z^{n,0})}
				\right) + 1
			\right)                                                                                                                      \\
			\le \                            & N  + \frac{4\condG (N-1)}{\beta} +  \frac{4\condC}{\beta}  \cdot \sum_{n = 1}^{N-1}\left(
			\frac{1}{\rho(\|z^{n,0} - z^{n-1,0} \|_M;z^{n,0})}
			\right) \ .
		\end{aligned}
	\end{equation}
	We have from the definition of $N$ that $\rho(\|{z}^{N-1,0} - z^{N-2,0}\|_M; {z}^{N-1,0}) > \eps$.  Also, the $\beta$-restart condition \eqref{catsdogs} implies for all $n \le N-2$ that:
	$$
		\frac{	1}{\rho(\|z^{n,0} - z^{n-1,0}\|_M; z^{n,0})} \le 	 \frac{\beta}{\rho(\|z^{n+1,0} - z^{n,0}\|_M; z^{n+1,0})}  \le \cdots \le
		\frac{\beta^{N - 1 -n}}{\rho(\|{z}^{N-1,0} - z^{N-2,0}\|_M; {z}^{N-1,0})} \ ,
	$$
	Therefore
	\begin{equation}\label{eq all inverse gap}
		\sum_{n = 1}^{N-1}\left(
		\frac{1}{\rho(\|z^{n,0} - z^{n-1,0} \|_M;z^{n,0})}
		\right) < \frac{1}{\eps} \cdot \left(
		1 + \beta^{1} + \beta^{2} + \cdots
		\right) = \frac{1}{\eps(1-\beta)} \ .
	\end{equation}
	Substituting \eqref{eq all inverse gap}  into \eqref{eq all complexity 4} yields
	\begin{equation}\label{eq all complexity 5}
		T \le  N  + \frac{4\condG (N-1)}{\beta} +  \frac{4\condC}{\beta}  \cdot\frac{1}{\eps(1-\beta)} =  \left(1 + \frac{4 \condG}{\beta}\right) N - \frac{4 \condG}{\beta} + \frac{4 \condC}{\beta (1- \beta)}\cdot \frac{1}{\eps} \ ,
	\end{equation}
	and then using \eqref{eq all complexity 3} we arrive at
	$$
		T \le 	\left(1 + \frac{4 \condG}{\beta} \right)\cdot \frac{1}{\ln(1/\beta)} \cdot \ln\left(\frac{4 \dist_M(z^{0,0},\calZ^\star)}{\eps}\right) + \left(2 + \frac{8 \condG}{\beta} \right) - \frac{4\condG}{\beta}
		+ \frac{4 \condC}{\beta (1-\beta)}\cdot \frac{1}{\eps} \ .
	$$
	Noting that $\condG \ge 1$ and $\beta = 1/e \approx     0.3679$, the above bound can be relaxed slightly to yield 
	\begin{equation}\label{eq all complexity 7}
		T \ \le \ 12\condG  \cdot \ln\left(\frac{4 \dist_M(z^{0,0},\calZ^\star)}{\eps}\right)  + 13 \condG
		+  \frac{18\condC}{\eps} \ .
	\end{equation}
	Finally, notice that the middle term above satisfies $13 \condG \le 12\condG \cdot \ln\big(e^{13/12}\big)$ and substituting it back into \eqref{eq all complexity 7} yields  $T \le \ 12\condG  \cdot \ln\left(\frac{4 \cdot e^{13/12} \cdot \dist_M(z^{0,0},\calZ^\star)}{\eps}\right) + \frac{18\condC}{\eps}$, which then simplifies to \eqref{eq all complexity 6}.
\Halmos\endproof

Note that if there exists $\condG < \infty$ and $\condC =0$ for which \eqref{eq restart L C condition} holds for all $n \ge 1$, then Theorem \ref{thm: complexity of PDHG with adaptive restart} shows linear convergence, as it states that the total number of \textsc{OnePDHG} iterations required to obtain a normalized duality gap smaller than $\eps$ is bounded above by $O(\condG \cdot \ln(\dist_M(z^{0,0}, \calZ^\star)/\eps))$.
 
\subsection{Properties of the level-set geometry condition measures $D_\delta$, $r_\delta$, and $d^H_\delta$}\label{subsec:lemmas_condition_measures}

With the ultimate goal of proving Theorem \ref{thm overall complexity clp} by appropriately applying inequality  \eqref{eq restart L C condition}, in this subsection we first prove some properties of the level set condition measures $D_\delta$, $r_\delta$, and $d^H_\delta$ which were originally defined in Section~\ref{sec:intro_sublevel-set-condition-number}.

Typically, the smaller $\delta$ is, the smaller all three condition measures are. As $\delta$ goes to $0$, $r_\delta$ and $d^H_\delta$ must converge to $0$.  We first present the following straightforward observation regarding $D_\delta$, $r_\delta$ and $d^H_\delta$.

\begin{lemma}\label{lm D ge r}
	Under Assumption \ref{assump:striclyfeasible} it holds for all $\delta > 0$ that $D_\delta \ge d^H_\delta \ge r_\delta > 0$ .
\end{lemma}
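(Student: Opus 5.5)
The plan is to prove the three inequalities separately, using only the definitions of $D_\delta$, $r_\delta$, $d^H_\delta$, the linearity of $\gap$, and Assumption~\ref{assump:striclyfeasible}. First I record two standard consequences of the primal-dual Slater condition. It gives strong duality with attainment, so $\calW^\star\neq\emptyset$. It also makes every level set $\calW_\delta$ compact, so the maxima defining $D_\delta$, $d^H_\delta$ and $r_\delta$ are attained. Both facts are classical (see \cite{freund2003primal}) and I would just cite them.

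\textbf{Step 1: $D_\delta\ge d^H_\delta$.} Since $\calW^\star\subseteq\calW_\delta$, for any $w\in\calW_\delta$ and any $w^\star\in\calW^\star$ both points lie in $\calW_\delta$. Hence $\dist(w,\calW^\star)\le\|w-w^\star\|\le D_\delta$. Taking the maximum over $w\in\calW_\delta$ gives $d^H_\delta\le D_\delta$.

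\textbf{Step 2: $r_\delta>0$.} Let $\bar w\in V\cap\textsf{int}K$ be a strictly feasible pair, which exists by Assumption~\ref{assump:striclyfeasible}, and fix $w^\star\in\calW^\star$. For $\lambda\in(0,1]$ set $w_\lambda:=(1-\lambda)w^\star+\lambda\bar w$.
\begin{itemize}
\item $w_\lambda\in V$, because $V$ is affine.
\item $w_\lambda\in\textsf{int}K$, because $\bar w\in\textsf{int}K$ and $w^\star\in K$.
\item $\gap$ is an affine function with $\gap(w^\star)=0$, so $\gap(w_\lambda)=\lambda\gap(\bar w)$. This is at most $\delta$ once $\lambda$ is small enough.
\end{itemize}
For such $\lambda$ we have $w_\lambda\in\calW_\delta$, and some ball $B(w_\lambda,\epsilon)$ with $\epsilon>0$ lies in $K$. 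Therefore $r_\delta\ge\epsilon>0$.

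\textbf{Step 3: $d^H_\delta\ge r_\delta$.} This is the only step requiring an idea. Let $w_\delta$ be a conic center, so $B(w_\delta,r_\delta)\subseteq K$. Since $d^H_\delta\ge\dist(w_\delta,\calW^\star)$, it suffices to show $\|w_\delta-w^\star\|\ge r_\delta$ for every $w^\star=(x^\star,s^\star)\in\calW^\star$.

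Suppose instead that $\|w_\delta-w^\star\|<r_\delta$. Then $w^\star$ lies in the open ball, so $B(w^\star,\epsilon)\subseteq K$ for some $\epsilon>0$. Recall that $(x^\star)^\top s^\star=\gap(w^\star)=0$ for feasible $w^\star$.
\begin{itemize}
\item If $s^\star\neq0$, the point $x^\star-t s^\star$ lies in $K_p$ for small $t>0$. Since $s^\star\in K_d$, this gives $0\le(x^\star-t s^\star)^\top s^\star=-t\|s^\star\|^2<0$, a contradiction. Hence $s^\star=0$.
\item Symmetrically, perturbing $s^\star$ in the direction $-x^\star$ forces $x^\star=0$.
\end{itemize}
So $w^\star=0$, and the ball $B(0,\epsilon)$ lies in $K$, which means $K_p=\mathbb{R}^n$. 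But then $K_d=K_p^*=\{0\}$ contains no ball around $0$, a contradiction. Therefore $\dist(w_\delta,\calW^\star)\ge r_\delta$, and hence $d^H_\delta\ge r_\delta$.

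I expect Step 3 to be the main point. The key is recognizing that a zero-gap pair cannot be an interior point of $K$ by complementarity, together with the small degenerate case $w^\star=0$.
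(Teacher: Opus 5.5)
Your proof is correct and follows essentially the same route as the paper. Both arguments get $d^H_\delta\le D_\delta$ from $\calW^\star\subseteq\calW_\delta$, and both get $r_\delta\le d^H_\delta$ because complementarity keeps every zero-gap pair out of $\textsf{int}K$. The paper phrases that step as $\calW^\star\subset\partial K$ followed by $\dist(w_\delta,\partial K)=r_\delta$, and it obtains $r_\delta>0$ from an interior point of $\calW_\delta$ produced by convexity under Assumption~\ref{assump:striclyfeasible}. Your version additionally supplies the details the paper asserts without proof, namely the perturbation argument showing complementary pairs cannot both be interior and the explicit convex combination with a point of $\calW^\star$.
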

\proof{Proof.} We have
	$$d^H_\delta = \max_{w \in \calW_\delta} \min_{w' \in \calW^\star} \|w-w'\| \le \max_{w \in \calW_\delta} \max_{w' \in \calW^\star} \|w-w'\| \le \max_{w \in \calW_\delta} \max_{w' \in \calW_\delta} \|w-w'\| = D_\delta \ , $$
	where the second inequality follows since $\calW^\star \subseteq \calW_\delta$.
	Given $(x^\star, s^\star) \in \calW^\star$, then since $x^\star \in K_p$ and $s^\star \in K_d = K_p^\star$ and $(x^\star)^\top s^\star =0$, it follows that either $x^\star \in \partial K_p$ or $s^\star \in \partial K_d$ (or both), whereby $(x^\star, s^\star) \in \partial K$.  Therefore $\calW^\star \subset \partial K$ and we have:
	$$d^H_\delta = \max_{w \in \calW_\delta} \dist(w, \calW^\star) \ge \dist(w_\delta, \calW^\star) \ge \dist(w_\delta, \partial K) = r_\delta  \ , $$
	where the second inequality follows from $\calW^\star \subset \partial K$.
	Last of all, under Assumption \ref{assump:striclyfeasible} and using the convexity of the level sets it follows that there exists $(x,s) \in \calW_\delta$ with $(x,s) \in \textsf{int} K_p \times \textsf{int} K_d$, and hence $(x,s) \in \textsf{int} K$ and $r_\delta > 0$.\Halmos\endproof

Intuitively, the larger $\delta$ is then the larger $r_\delta$ would be. The lemma below presents lower and upper bounds on $\sup_{\gamma>0} \frac{r_\gamma}{\gamma}$, and shows that this quantity is nearly a reciprocal of the maximum norm of a point in the optimal solution set $\calW^\star$. 

\begin{lemma}\label{lm r_delta over delta}
	Under Assumption \ref{assump:striclyfeasible}, it holds that
	\begin{equation}\label{eq of lm r calW small}
		\frac{\width_K}{\max_{w\in\calW^\star}\|w\|}\le \sup_{\gamma >0}\frac{r_\gamma}{\gamma} \le \frac{1}{\max_{w\in\calW^\star}\|w\|} \ ,
	\end{equation}
	in which $\width_K$ is the width of the cone $K:=K_p\times K_d$ defined below in Definition \ref{def width of cone}. \end{lemma}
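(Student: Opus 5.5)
The plan is to prove the two inequalities in \eqref{eq of lm r calW small} separately. Both rest on one identity. Fix any $w^\star=(x^\star,s^\star)\in\calW^\star$ and write $\tilde w^\star:=(s^\star,x^\star)$ for its swap. For every feasible $w=(x,s)\in\calF$ we have $x-x^\star\in\mathcal L$ and $s-s^\star\in\mathcal L^\bot$, so $(x-x^\star)^\top(s-s^\star)=0$. Since also $(x^\star)^\top s^\star=0$, this gives
\begin{equation*}
\gap(w)=x^\top s = x^\top s^\star + s^\top x^\star = w^\top \tilde w^\star .
\end{equation*}
Moreover $\tilde w^\star\in K_d\times K_p=K^*$ and $\|\tilde w^\star\|=\|w^\star\|$.

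For the upper bound, fix $\gamma>0$ and let $(w_\gamma,r_\gamma)$ solve \eqref{eq radius}. For any $w^\star\in\calW^\star$ with $w^\star\neq0$, set $u:=\tilde w^\star/\|\tilde w^\star\|$. The ball condition $B(w_\gamma,r_\gamma)\subseteq K$ gives $w_\gamma-r_\gamma u\in K$. Pairing with $\tilde w^\star\in K^*$ yields $(w_\gamma-r_\gamma u)^\top\tilde w^\star\ge0$, that is,
\begin{equation*}
\gamma\ \ge\ \gap(w_\gamma)=w_\gamma^\top\tilde w^\star\ \ge\ r_\gamma\|w^\star\| .
\end{equation*}
Maximizing over $w^\star\in\calW^\star$ and then taking the supremum over $\gamma$ gives the right-hand inequality. (If $\calW^\star=\{0\}$ the right-hand side is $+\infty$ and there is nothing to prove.)

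For the lower bound, I will use the identity in the other direction. For any $w\in\calF$ and any $w^\star\in\calW^\star$, Cauchy--Schwarz gives $\gap(w)=w^\top\tilde w^\star\le\|w\|\,\|w^\star\|$. Taking $w^\star$ to be a least-norm optimal point, this is at most $\|w\|\max_{w'\in\calW^\star}\|w'\|$. Hence, with $\gamma:=\gap(w)$, any feasible $w$ satisfies
\begin{equation*}
\frac{r_\gamma}{\gamma}\ \ge\ \frac{\dist(w,\partial K)}{\gap(w)}\ \ge\ \frac{\dist(w,\partial K)}{\|w\|}\cdot\frac{1}{\max_{w'\in\calW^\star}\|w'\|},
\end{equation*}
because $w$ itself is admissible in \eqref{eq radius} with $r=\dist(w,\partial K)$. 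It therefore remains to produce a feasible $w$ (with $\gap(w)>0$) whose relative interiority $\dist(w,\partial K)/\|w\|$ is at least $\width_K$. Here I am assuming that Definition~\ref{def width of cone} takes the form $\width_K=\sup\{\dist(u,\partial K)/\|u\|: u\in K\setminus\{0\}\}$.

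The main obstacle is this last step. A feasible point need not point in the ``central'' direction of $K$ that attains the width. My first attempt will be to exploit the freedom in the choice of the comparison point $w^\star$ rather than in $w$. Instead of Cauchy--Schwarz, I would bound $\gap(w)=w^\top\tilde w^\star$ using the width of $K^*$ (which equals $\width_K$ after the swap of components) applied to $\tilde w^\star$. Concretely, I would try to show that $\tilde w^\star$ lies in a ball inside $K^*$ in a way that lets me compare $w^\top\tilde w^\star$ to $\dist(w,\partial K)\cdot\|w^\star\|/\width_K$. This would reduce the requirement to the existence of some Slater point $w^0\in\calF\cap\textsf{int}K$, which Assumption~\ref{assump:striclyfeasible} guarantees. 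If that fails, the fallback is to take convex combinations $(1-\lambda)w^\star+\lambda w^0$ and track how both $\gap$ and the distance to $\partial K$ scale linearly in $\lambda$, choosing $w^\star$ of maximum norm.
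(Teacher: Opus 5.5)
Your swap identity $\gap(w)=w^\top\tilde w^\star$ on $\calF$ is correct. Your upper-bound argument (pairing $w_\gamma-r_\gamma\tilde w^\star/\|\tilde w^\star\|\in K$ with $\tilde w^\star\in K^*$) is a valid, self-contained proof of the right-hand inequality; the paper instead reads it off Corollary~\ref{cor: r R in symmetric form} with $\eps=0$.

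The lower bound, however, has a genuine gap, and the route you set up cannot be completed. Your chain $r_\gamma/\gamma\ge\dist(w,\partial K)/\gap(w)\ge\dist(w,\partial K)/(\|w\|\,\|w^\star\|)$ holds for \emph{every} $w^\star\in\calW^\star$. So a feasible $w$ with $\dist(w,\partial K)/\|w\|\ge\width_K$ would give $\sup_{\gamma>0}r_\gamma/\gamma\ge\width_K/\dist(0,\calW^\star)$. Together with your own upper bound, this would force $\max_{w\in\calW^\star}\|w\|\le\dist(0,\calW^\star)/\width_K$. That is false. Take $\min\{x_3: x_1+\eta x_2+x_3=1,\ x\in\mathbb{R}^3_+\}$ with small $\eta>0$, which satisfies Assumption~\ref{assump:striclyfeasible}. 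Then $\calS^\star=\{(0,0,1)\}$ and $\calX^\star$ is the segment from $(1,0,0)$ to $(0,1/\eta,0)$. Hence $\dist(0,\calW^\star)\le\sqrt2$, $\max_{w\in\calW^\star}\|w\|\ge1/\eta$, and $\width_K=1/\sqrt6$. So the point you are looking for need not exist: Cauchy--Schwarz against a single optimal point can never see the \emph{maximal} norm of $\calW^\star$ that the statement involves.

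Neither repair helps.
\begin{itemize}
\item $\tilde w^\star\in\partial K^*$, because $(x^\star)^\top s^\star=0$ (as in Lemma~\ref{lm D ge r}), so no ball of positive radius around it fits in $K^*$. More fundamentally, any inequality obtained by pairing a ball inside $K$ or $K^*$ with an element of the dual cone bounds $\gap(w)=w^\top\tilde w^\star$ from \emph{below}. These are weak-duality inequalities and only reproduce the upper bound.
\item The combinations $(1-\lambda)w^\star+\lambda w^0$ have gap exactly $\lambda\gap(w^0)$ and boundary distance at least $\lambda\dist(w^0,\partial K)$. So they certify nothing beyond the ratio $\dist(w^0,\partial K)/\gap(w^0)$ of the Slater point you started from, which can be arbitrarily small and has no link to $\width_K/\max_{w\in\calW^\star}\|w\|$.
\end{itemize}

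The missing ingredient is a strong-duality (separation) argument that manufactures a deep point of a level set from the boundedness of a level set. The paper obtains it from Lemma~\ref{lm:primal-dual-sublevelsets} (Freund's Theorem~3.2). Corollary~\ref{cor: r R in symmetric form} applies it to the combined problem $\min\{\gap(w): w\in V\cap K\}$, whose symmetric dual is exactly the swapped problem in $\tilde w$. This gives $\width_K\min\{\delta,\eps\}\le r_\delta\max_{w\in\calW_\eps}\|w\|$. Setting $\eps=\delta$ and letting $\delta\searrow0$ yields the left inequality, since the level sets are compact and nested, so $\max_{w\in\calW_\delta}\|w\|\to\max_{w\in\calW^\star}\|w\|$.

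To stay within your framework, one way to do it is as follows.
\begin{itemize}
\item Choose $e\in K$ with $\|e\|=1$ and $B(e,\width_K)\subseteq K$, and observe that $r_\delta\ge\width_K\,\theta_\delta$, where $\theta_\delta:=\max\{\theta: w\in\calW_\delta,\ w-\theta e\in K\}$.
\item Dualize this problem; strong duality holds by Assumption~\ref{assump:striclyfeasible}. For a multiplier $\alpha>0$ on the gap constraint, the cone multiplier has the form $\alpha\tilde w'$ with $w'\in\calF$. The dual objective is $\alpha(\gap(w')+\delta)$ subject to $\alpha\,e^\top\tilde w'=1$, and $e^\top\tilde w'\le\|w'\|$.
\item The $\alpha=0$ terms come from recession directions of $\calF$ and are bounded below by a positive constant. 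Hence $r_\delta/\delta\ge\width_K\inf_{w'\in\calF\setminus\{0\}}(1+\gap(w')/\delta)/\|w'\|$ for small $\delta$.
\end{itemize}
It is through this infimum that the norms of near-optimal points, and hence $\max_{w\in\calW^\star}\|w\|$ as $\delta\searrow0$, enter.
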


\begin{definition}[Width of a cone]\label{def width of cone}
	The width of a cone $K$ is defined as:
\begin{equation}\label{eq:width_K}
	\width_{K}:=
	\max \left\{\left.\frac{r}{\|x\|} \right\rvert\, x\ne0,\ r\ge0,\ B(x, r) \subseteq K\right\} \ .
\end{equation}
\end{definition} 
\noindent
The width of a cone is an intrinsic property of the cone, though it depends on the choice of norm. Under the Euclidean norm, some standard families of cones satisfy $\width_{\mathbb{R}^n_+} = 1/\sqrt{n}$, $\width_{\mathbb{S}_{+}^{d \times d}} = 1/\sqrt{d}$, and $\width_{\mathbb{K}_{\textsf{soc}}^{d+1}} = 1/\sqrt{2}$. See, for example, \cite{freund1999condition}.
The ratio $r_\gamma/\gamma$ will appear later in the proof of our main results. The proof of Lemma \ref{lm r_delta over delta} is presented in Appendix \ref{app:proof of lm r calW small}. 

We next develop and state an ``error bound'' type of result for $\calW_\delta$ involving the quotient $D_\delta / r_\delta$ that will be used in later proofs, but that perhaps might be of independent interest. Let us use the notation $\calF_{++}$ to denote the set of strictly feasible solutions in $\calF$, namely $\calF_{++} := V \cap \textsf{int} K$. Let $w\in V \setminus \calF$ and $w_{int}\in \calF_{++}$ be given, whereby the line segment from $w$ to $w_{int}$ will contain a unique point that lies in $\partial K$, and let us denote this point by $\calF(w;w_{int}) $.  More formally we have
\begin{equation}\label{eq  def v}
	\calF(w;w_{int})  := \arg\min_{\tilde{w}}\left\{\|w - \tilde{w}\| : \tilde{w}:= \lambda \cdot w_{int} + (1- \lambda)\cdot w \ \mathrm{for \ some} \  \lambda, \ \mathrm{and} \ \tilde{w}\in \calF
	\right\} \ .
\end{equation}
The following lemma states for the level set $\calW_\delta$ that if the ratio $D_\delta / r_\delta$ is small, then a point in $V \cap \{w: \gap(w)\le \delta\}$ that is close to $K$ must also be close to $\calW_\delta$. In this way we see that $D_\delta / r_\delta$ provides an error bound for $\calW_\delta$.

\begin{lemma}\label{lm error bound R r}
	For any $\delta >0$ and $w \in V$ with $\gap(w) \le \delta$,  it holds that either $\dist(w,K) = 0$ and $w \in \calW_\delta$, or
	\begin{equation}\label{eq local error bound}
		\frac{\dist(w, \calW_\delta)}{\dist(w, K)} \le \frac{\| w - \calF(w;w_\delta) \|}{\dist(w, K)} \le \frac{\| w_\delta - \calF(w;w_\delta) \|}{r_\delta}  \le \frac{D_\delta}{r_\delta} \ .
	\end{equation}
\end{lemma}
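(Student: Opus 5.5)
The plan is to prove the three inequalities in \eqref{eq local error bound} from left to right, after first dispatching the trivial case. If $\dist(w,K)=0$, then $w\in K\cap V$ and $\gap(w)\le\delta$, so $w\in\calW_\delta$ directly from Definition \ref{def level set}. Otherwise $w\in V\setminus K$, so $w\notin\calF$. Take the conic center $w_\delta$ from Definition \ref{def radius}; since $r_\delta>0$ by Lemma \ref{lm D ge r}, we have $w_\delta\in V\cap \textsf{int}K=\calF_{++}$. This makes $\calF(w;w_\delta)$ well defined. The segment $[w,w_\delta]$ lies in the affine set $V$, and $K$ is closed and convex with $w\notin K$ and $w_\delta\in\textsf{int}K$. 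Hence $\{\lambda: \lambda w_\delta+(1-\lambda)w\in K\}\cap[0,1]=[\lambda^*,1]$ with $\lambda^*\in(0,1)$, and $v:=\calF(w;w_\delta)=\lambda^* w_\delta+(1-\lambda^*)w\in\partial K$. Moreover $\gap$ is affine, $\gap(w)\le\delta$ and $\gap(w_\delta)\le\delta$, so $\gap(v)\le\delta$ and therefore $v\in\calW_\delta$.

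\textbf{First inequality.} Since $v\in\calW_\delta$, we get $\dist(w,\calW_\delta)\le\|w-v\|$ immediately.

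\textbf{Third inequality.} Both $w_\delta$ and $v$ lie in $\calW_\delta$, so $\|w_\delta-v\|\le D_\delta$.

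\textbf{Second inequality.} This is the main step, a similar-triangles argument. The claim is $\|w-v\|\,r_\delta\le \|w_\delta-v\|\,\dist(w,K)$. Write $t:=\|w-v\|/\|w_\delta-v\|=\lambda^*/(1-\lambda^*)$, so that $w=v-t(w_\delta-v)$. Suppose, for contradiction, that $\dist(w,K)<t\,r_\delta$, and pick $p\in K$ with $\|w-p\|<t r_\delta$. Consider the point
$$v'=\frac{1}{1+t}\,p+\frac{t}{1+t}\,\Big(w_\delta+\frac{w-p}{t}\Big).$$
The second point lies in $B(w_\delta,r_\delta)\subseteq K$, so by convexity $v'\in K$. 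A direct computation gives $v'=\frac{p+tw_\delta+w-p}{1+t}=\frac{w+tw_\delta}{1+t}=v$.

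More usefully, I will show that $v\in\textsf{int}K$. The perturbation $\frac{w-p}{t}$ has norm strictly less than $r_\delta$, so the ball $B\big(w_\delta+\frac{w-p}{t},\,\epsilon\big)$ is contained in $K$ for some $\epsilon>0$. Taking the convex combination with $p$ gives a ball around $v$ of radius $\frac{t\epsilon}{1+t}>0$ inside $K$. This contradicts $v\in\partial K$, which holds because $\lambda^*>0$ is minimal. Hence $\dist(w,K)\ge t\,r_\delta$, which rearranges to the second inequality.

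The only delicate point I anticipate is ensuring that the boundary point $v$ lies on the boundary of $K$ and not merely on that of $\calF$. This follows because the segment lies in $V$, so membership in $\calF$ along the segment is the same as membership in $K$.
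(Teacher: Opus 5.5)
Your proof is correct. The trivial case, the membership $\calF(w;w_\delta)\in\calW_\delta$, and the first and third inequalities are handled exactly as in the paper. The difference is in the second inequality.

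The paper takes a supporting hyperplane $H=\{\hat w: p^\top\hat w=0\}$ of the cone $K$ at the boundary point $v=\calF(w;w_\delta)$, with $p\in K^*$, and evaluates $\lambda/(1-\lambda)$ in two ways. By collinearity it equals $\|w-v\|/\|w_\delta-v\|$. From $p^\top v=0$ it equals $|p^\top w|/|p^\top w_\delta|=\dist(w,H)/\dist(w_\delta,H)$. The bound then follows from two one-line facts: $\dist(w,H)\le\dist(w,K)$, because $w$ and $K$ lie on opposite sides of $H$; and $\dist(w_\delta,H)\ge r_\delta$, because $B(w_\delta,r_\delta)\subseteq K$.

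You instead argue by contradiction. A point of $K$ at distance less than $t\,r_\delta$ from $w$ would let you write $v$ as a convex combination of that point and a point lying strictly inside $B(w_\delta,r_\delta)$, with positive weight $t/(1+t)$ on the latter. This forces $v\in\textsf{int}K$, contradicting the minimality of $\lambda^*$.

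Both routes give the same constant. Yours is more elementary: it needs only closedness and convexity of $K$, and no separation theorem. The paper's is direct and makes transparent that the ratio is governed by distances to a single hyperplane separating $w$ from $K$.

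Your intermediate computation showing $v'=v\in K$ is superfluous, since $v\in K$ was already known. Only the interior-point version of that argument does any work.
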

\proof{Proof.} 
	If $\dist(w,K) =0$, then $w \in \calW_\delta$ because $w \in V$ and $\gap(w) \le \delta$ by the hypotheses of the lemma.  If $w \notin K$, then $w \notin \calF$, and let $v := \calF(w;w_\delta)$.  Notice that $w_\delta \in \calW_\delta $, $w \in V$, and $\gap(w) \le \delta$ together imply that $v := \calF(w;w_\delta) \in \calW_\delta$. Then the first inequality in \eqref{eq local error bound} holds because $v \in \calW_\delta$ and so $ \dist(w, \calW_\delta) \le \| w - v \| = \| w - \calF(w;w_\delta) \|$.  For the third inequality of \eqref{eq local error bound} notice that $w_\delta \in \calW_\delta $ and $v \in \calW_\delta $ imply that $ \| w_\delta - v \| = \| w_\delta - \calF(w;w_\delta) \|  \le \Diam(\calW_\delta) = D_\delta$, which yields the third inequality of \eqref{eq local error bound}.

	We now prove the second inequality of \eqref{eq local error bound}. From the definition in \eqref{eq  def v}, because $w_\delta \in \calF_{++}$, there exists $\lambda \in (0,1)$ for which
	\begin{equation}\label{eq actual set v}
		v = {\lambda} \cdot w_{\delta} + (1- {\lambda}) \cdot w \ .
	\end{equation}
	Furthermore, since $v \in \partial K$, then there exists a supporting hyperplane $H$ of $K$ that contains $v$.  It then follows that there exists $p \in \mathbb{R}^{2n}$ for which $H := \{\hat{w}\in \mathbb{R}^{2n}: p^\top \hat{w} = 0\}$, $p^\top v = 0$, $p \in K^*$, and $p^\top w<0$ and $p^\top \hat w >0$ for all $\hat w \in \textsf{int}K$ and so in particular $p^\top w_\delta > 0$.  From \eqref{eq actual set v} we have
	$$
		{\lambda} \cdot p^\top w_\delta + (1 - {\lambda}) \cdot p^\top  w = p^\top v = 0 \ , $$ which can be rearranged to yield the following equalities:
	\begin{equation}\label{eq error bound R r}
		\frac{\lambda}{1-\lambda} = \frac{|p^\top w |}{|p^\top w_\delta |}  = \frac{\dist(w, H)}{\dist(w_\delta,H)} \ .
	\end{equation}
	From \eqref{eq actual set v} the left side of \eqref{eq error bound R r} can be further expressed as \begin{equation}\label{eq error bound R r 1}
		\frac{\lambda}{1-\lambda} = \frac{\| w - v\|}{\|w_\delta - v\|} \ .
	\end{equation}
	Also, since $K$ and $w$ are on different sides of the hyperplane $H$, this implies that	\begin{equation}\label{eq error bound R r 2}
		\dist(w, H) \le \dist(w, K) \ .
	\end{equation}
	Additionally, because $B(w_\delta,r_\delta) \subseteq K$ (from Definition \ref{def radius}), we have
	\begin{equation}\label{eq error bound R r 3}
		\dist(w_\delta, H) \ge r_\delta \ .
	\end{equation}
	Substituting \eqref{eq error bound R r 1}, \eqref{eq error bound R r 2}, \eqref{eq error bound R r 3} back into \eqref{eq error bound R r} yields
	\begin{equation}\label{eq error bound R r 4}
		\frac{\| w - v\|}{\|w_\delta - v\|} \le \frac{ \dist(w, K)  }{r_\delta} \ ,
	\end{equation}
	which proves the second inequality in  \eqref{eq local error bound}.
\Halmos\endproof

We will use the following elementary convexity observation in the proofs below.
\begin{proposition}\label{lm:convexity_ineq}Let $f:[0,\infty)\mapsto \mathbb{R}$ and $g:[0,\infty)\mapsto \mathbb{R}$ be nonnegative convex functions for which $f(0) = g(0) $, and suppose that $f$ is linear. If there exists $u > 0$ such that $g(u)\ge f(u) $, then for any $v \ge u$ it holds that  $g(v) \ge f(v)$.
\end{proposition}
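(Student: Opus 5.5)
The plan is to compare $g$ with $f$ through the difference $h := g - f$ on $[0,\infty)$. Since $f$ is linear (affine) and $g$ is convex, $h$ is convex. We also have $h(0) = g(0) - f(0) = 0$ and, by hypothesis, $h(u) \ge 0$ for the given $u>0$. The claim $g(v)\ge f(v)$ for $v \ge u$ is exactly $h(v)\ge 0$, so the whole argument reduces to a one-dimensional convexity statement about $h$.

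The key step is to write $u$ as a convex combination of $0$ and $v$. For $v = u$ there is nothing to prove, so take $v > u > 0$. Set $\theta := u/v \in (0,1)$, so that $u = \theta v + (1-\theta)\cdot 0$. Convexity of $h$ gives
\[
0 \le h(u) \le \theta\, h(v) + (1-\theta)\, h(0) = \theta\, h(v) .
\]
Since $\theta > 0$, it follows that $h(v) \ge 0$, that is, $g(v) \ge f(v)$.

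There is no substantive obstacle here. The only point to check is that only convexity of $g$, affineness of $f$, and $f(0)=g(0)$ are used. Nonnegativity of $f$ and $g$ plays no role, and linearity of $f$ enters only through the convexity of $g - f$. Equivalently, one can phrase the argument as monotonicity of the difference quotient $h(t)/t$ for a convex $h$ with $h(0)=0$, which is the form in which the proposition will presumably be applied later, with $g$ a function of $\delta$ such as $r_\delta$-related quantities compared against a linear function.
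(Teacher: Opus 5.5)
Your proof is correct and is essentially the same as the paper's: the paper also sets $F := g - f$, notes that $F$ is convex with $F(0)=0$ and $F(u)\ge 0$, and concludes from $\tfrac{u}{v}F(v) = \tfrac{u}{v}F(v) + \tfrac{v-u}{v}F(0) \ge F(u) \ge 0$. Your closing guess about the application is off, though: the proposition is not applied with $\delta$ as the variable, but with $t$ parametrizing the ray $w_t = w^\star + t(w - w^\star)$ in Lemmas \ref{lm efeas bound dist} and \ref{lm c123 sharp LP}.
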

\proof{Proof.} Define $F(\cdot) := g(\cdot) - f(\cdot)$, and note that $F:[0,\infty)\mapsto \mathbb{R}$ is convex since $f(\cdot)$ is a linear function, also $F(0) = 0$, and $F(u)\ge 0$. From the convexity of $F$ we have for $v > u$ that $(\frac{u}{v})\cdot F(v)  = (\frac{u}{v})\cdot F(v) + (\frac{v - u}{v})\cdot F(0) \ge  F\left(	(\frac{u}{v})\cdot v + (\frac{v - u}{v})\cdot 0 	\right) = F(u)\ge 0$.  Hence $ g(v) -f(v) \ge 0$ for $v >u$. And when $v=u$ the result holds trivially.
\Halmos\endproof

The next lemma states that the distance to the set of optimal solutions can be bounded by terms involving the distance to constraints and the Hausdorff distance $d^H_\delta$.  (The lemma presumes that $c \in \operatorname{Null}(A)$, as discussed previously in Section \ref{sec:get_general_clp_dual}.)

\begin{lemma}\label{lm efeas bound dist} Under the presumption that $c \in \operatorname{Null}(A)$, for any  $w = (x,s) \in \mathbb{R}^{2n}$,  and any $\delta>0$, it holds that
	\begin{equation}\label{eq gap small bound}
		\dist(w,\calW^\star) \le \frac{3 D_\delta}{r_\delta} \cdot  \max\{\dist(w,V),\dist(w,K)\} + \frac{d^H_\delta}{\delta} \cdot \max\{\gap(w),\delta\} \ .
	\end{equation}
\end{lemma}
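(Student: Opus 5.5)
The plan is to reduce to a point of $V$, shrink it toward an optimal solution so that its gap is at most $\delta$, apply the error bound of Lemma~\ref{lm error bound R r} at level $\delta$, and then undo the shrinking. Write $e := \max\{\dist(w,V),\dist(w,K)\}$ and $\gamma := \max\{\gap(w),\delta\} \ge \delta$.

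\emph{Step 1 (project onto $V$).} Let $w_1 := P_V(w)$, so that $\|w-w_1\| = \dist(w,V) \le e$. By the triangle inequality, $\dist(w_1,K) \le \dist(w,K) + \|w-w_1\| \le 2e$. The displacement $w_1 - w$ lies in $V^\bot$, which is a subset of $\mathcal{L}^\bot \times \mathcal{L}$. The gap is $\gap(x,s) = c^\top x + q^\top s$, a homogeneous linear function with coefficient vector $(c,q) \in \mathcal{L}\times\mathcal{L}^\bot$; this uses $c\in\operatorname{Null}(A)$ and $q\in\operatorname{Im}(A^\top)$. Hence $(c,q)$ is orthogonal to $w_1-w$, and $\gap(w_1) = \gap(w) \le \gamma$.

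\emph{Step 2 (shrink toward $\calW^\star$).} Let $w^\star := P_{\calW^\star}(w_1)$, set $t := \delta/\gamma \in (0,1]$, and define $w_2 := t w_1 + (1-t) w^\star$. Then:
\begin{itemize}
\item $w_2 \in V$, since it is an affine combination of two points of $V$.
\item $\gap(w_2) = t\,\gap(w_1) \le \delta$, by linearity of the gap and $\gap(w^\star)=0$.
\item $\dist(w_2,K) \le t\,\dist(w_1,K) \le 2te$. For this, take $k = P_K(w_1)$; the point $tk + (1-t)w^\star$ lies in $K$ by convexity of $K$.
\item $\dist(w_2,\calW^\star) = t\,\dist(w_1,\calW^\star)$. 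This holds because $\calW^\star$ is convex and $w^\star$ is the projection of $w_1$ onto it. The projection of every point $w^\star + \theta(w_1 - w^\star)$ with $\theta \ge 0$ is again $w^\star$, so the distance along this ray is exactly $\theta\|w_1-w^\star\|$.
\end{itemize}
This exact linear scaling of the distance is the key point. Any argument based only on convexity of the distance function gives the inequality in the wrong direction.

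\emph{Step 3 (error bound at level $\delta$, then undo the scaling).} Lemma~\ref{lm error bound R r} applies to $w_2$ at level $\delta$. It gives a point $v_2 \in \calW_\delta$ with $\|w_2 - v_2\| \le (D_\delta/r_\delta)\,\dist(w_2,K) \le 2t (D_\delta/r_\delta) e$; in the trivial case of that lemma, $v_2 = w_2$. Since $v_2\in\calW_\delta$, we have $\dist(v_2,\calW^\star)\le d^H_\delta$, and so
\[
t\,\dist(w_1,\calW^\star) = \dist(w_2,\calW^\star) \le \|w_2 - v_2\| + \dist(v_2,\calW^\star) \le 2t\,\frac{D_\delta}{r_\delta}\, e + d^H_\delta .
\]
Dividing by $t = \delta/\gamma$ gives $\dist(w_1,\calW^\star) \le 2(D_\delta/r_\delta)e + (d^H_\delta/\delta)\,\gamma$. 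Finally,
\[
\dist(w,\calW^\star) \le e + \dist(w_1,\calW^\star) \le \frac{3D_\delta}{r_\delta}\, e + \frac{d^H_\delta}{\delta}\,\gamma ,
\]
where the absorbed term uses $D_\delta/r_\delta \ge 1$ from Lemma~\ref{lm D ge r}. This is exactly \eqref{eq gap small bound}.

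I expect the main obstacle to be Step~2, namely avoiding applying the error bound at level $\gamma$, which would produce $D_\gamma/r_\gamma$ instead of $D_\delta/r_\delta$. The homogeneous scaling toward the projection $w^\star$ resolves this. It relies on the gap being exactly linear with no constant term, which is where the presumption $c\in\operatorname{Null}(A)$ is used.
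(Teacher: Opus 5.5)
Your proof is correct and is essentially the paper's argument. You project onto $V$ (gap-invariant because $(c,q)\in\mathcal{L}\times\mathcal{L}^{\bot}$), apply Lemma~\ref{lm error bound R r} at level $\delta$ together with the definition of $d^H_\delta$, and handle $\gap(w)>\delta$ by moving along the ray from the projection onto $\calW^\star$, where $\dist(\cdot,\calW^\star)$ is exactly linear while the other error terms are sub-homogeneous. The only difference is packaging: the paper splits into the cases $\gap(w)\le\delta$ and $\gap(w)\ge\delta$ and passes from the first to the second via Proposition~\ref{lm:convexity_ineq} applied to the unprojected $w$, whereas you scale the already-projected point with $t=\delta/\max\{\gap(w),\delta\}$ and check $\dist(w_2,K)\le t\,\dist(w_1,K)$ directly, which covers both cases in one pass.
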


\proof{Proof.}
	We first consider the case where $\delta \ge  \gap(w)$. Define $\hat{w} := P_{V}(w) $, whereby $\|w - \hat{w}\| = \dist(w,V)$. Then because $c \in \mathcal{L}$ and $q \in \mathcal{L}^{\bot}$, then $\gap(w) = \gap(\hat{w})$, therefore $\gap(\hat{w}) \le \delta$ as well. We have:
	\begin{equation}\label{eq error bound distance clp 1}
		\dist(w, \calW^\star) \le \dist(\hat{w}, \calW^\star) + \| \hat{w}- w\|  = \dist(\hat{w}, \calW^\star) +\dist(w, V) \ ,
	\end{equation} and from the definition of $d^H_\delta$ we also have:
	\begin{equation}\label{eq error bound distance clp 2}
		\dist(\hat{w},\calW^\star) \le \dist(\hat{w}, \calW_\delta) + d^H_\delta \ .
	\end{equation}
	Since $\hat{w}\in V$ and $\gap(\hat{w})\le \delta$, from Lemma \ref{lm error bound R r} it follows that $ \dist(\hat{w}, \calW_\delta) $ in \eqref{eq error bound distance clp 2} can be bounded as follows:
	\begin{equation}\label{eq error bound distance clp 3}
		\begin{aligned}
			\dist(\hat{w}, \calW_\delta) & \le \frac{D_\delta}{r_\delta} \cdot \dist(\hat{w},K)
			\le \frac{D_\delta}{r_\delta} \cdot\left(\|w - \hat{w}\| + \dist(w,K)    \right)
			\\
			                             & \le  \frac{2 D_\delta}{r_\delta} \cdot \max\{\dist(w,V),\dist(w,K)\}  \ .
		\end{aligned}
	\end{equation}
	Then combining \eqref{eq error bound distance clp 3}, \eqref{eq error bound distance clp 2}, and \eqref{eq error bound distance clp 1} yields
	$$
		\dist(w,\calW^\star) \le \frac{2 D_\delta}{r_\delta} \cdot  \max\{\dist(w,V),\dist(w,K)\} +\dist(w , V) + d^H_\delta \ .
	$$
	Last of all note that $D_\delta \ge r_\delta$ from Lemma \ref{lm D ge r}, from which the above inequality then implies \eqref{eq gap small bound}.

	Let us now consider the case where  $\delta \le \gap(w)$.  Here we will make use of Proposition \ref{lm:convexity_ineq} to complete the proof.  Let $w^\star = P_{\calW^\star}(w) = \arg\min_{\bar{w}\in\calW^\star}\|\bar{w} - w\|$ and define $w_t := w^\star + t\cdot (w - w^\star)$ for $t\in[0,\infty)$. Then define the following functions of $t$ :
	$$
		f(t):= \dist(w_t,\calW^\star) \ ,  \text{ and  }g(t):= \frac{3 D_\delta}{r_\delta} \cdot  \max\{\dist(w_t,V),\dist(w_t,K)\} + \frac{d^H_\delta}{\delta} \cdot \max\{\gap(w_t),0\} \ .
	$$
	Then $f(t)$ is a nonnegative  linear  function on $[0,\infty)$, and $f(0) = 0$. And $g(t)$ is convex and nonnegative on $[0,\infty)$, and $g(0) = 0$. In addition, because $\gap(\cdot)$ is a linear function and $\gap(w_t) = t \cdot \gap(w)$, then setting $u := \delta / \gap(w)$ we obtain $ \gap(w_u) = u\cdot \gap(w) = \delta$. We can then invoke \eqref{eq gap small bound} using $w_u$ in the place of $w$, which yields $g(u) \ge f(u)$. Now it follows from Proposition \ref{lm:convexity_ineq} with $v := 1\ge  u$ that  $g(1) \ge f(1)$, which is precisely \eqref{eq gap small bound} in the case $\delta \le \gap(w)$, and completes the proof.
\Halmos\endproof

\subsection{Completing the proof of Theorem \ref{thm overall complexity clp} }\label{subsec:normalized_gap_restart_condition}

In this section we use the results in Sections \ref{subsec:abstract_restart_bound} and \ref{subsec:lemmas_condition_measures} to prove several more lemmas and a proposition, which are then finally used to prove Theorem \ref{thm overall complexity clp} via the complexity bound inequality \eqref{eq restart L C condition}.  We will make extensive use of the following constant in translating between distances in different spaces with the $M$-norm and the Euclidean norm:
\begin{equation}\label{icemelt}
	c_0 := \max\left\{
	\frac{1}{\sqrt{\sigma} \lambda_{\min}  }, \ \frac{1}{\sqrt{\tau}}
	\right\} \ .
\end{equation}
We first prove the following lemma on the relation between two particular norms and spaces.
\begin{lemma}\label{lm change of norm}
	Suppose that $\tau,\sigma$ satisfy \eqref{eq:general_stepsize}. Let $\calX\subseteq\mathbb{R}^n$ be a nonempty closed convex set, and let $\calS\subseteq c+\operatorname{Im}(A^\top)$ be a nonempty closed convex set. Define $\calY:=\{y\in\mathbb{R}^m:c-A^\top y\in\calS\}$. Given any point $z :=(x,y) \in \mathbb{R}^{n+m}$, let $s:= c - A^\top y$ and $w := (x,s) $.  Then it holds that
	\begin{equation}\label{eqlm change of norm}
		\dist_M(z, \calX \times \calY) \le \sqrt{2}c_0 \cdot \dist(w,\calX \times\calS) \ .
	\end{equation}
\end{lemma}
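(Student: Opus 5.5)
The plan is to construct an explicit point of $\calX\times\calY$ from the Euclidean projection of $w$ onto $\calX\times\calS$. Then bound its $M$-distance to $z$ by a block-diagonal majorant of $M$, and finally convert the dual multiplier displacement into a slack displacement using $\lambda_{\min}$.

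\textbf{Step 1 (nearest point and a matching multiplier).} Let $(x^\star,s^\star):=P_{\calX\times\calS}(w)$, which exists since $\calX\times\calS$ is nonempty, closed and convex. Then
\[
\|x-x^\star\|^2+\|s-s^\star\|^2=\dist(w,\calX\times\calS)^2 .
\]
Both $s=c-A^\top y$ and $s^\star\in\calS$ lie in $c+\operatorname{Im}(A^\top)$, so $s-s^\star\in\operatorname{Im}(A^\top)$. Define
\[
\Delta y:=(AA^\top)^\dag A(s-s^\star),\qquad y^\star:=y+\Delta y .
\]
Since $s-s^\star\in\operatorname{Im}(A^\top)$, we have $A^\top(AA^\top)^\dag A(s-s^\star)=s-s^\star$. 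Hence $A^\top\Delta y=s-s^\star$ and
\[
c-A^\top y^\star=s-(s-s^\star)=s^\star\in\calS,
\]
so $y^\star\in\calY$. Moreover $\Delta y\in\operatorname{Im}(A)$ is the least-norm solution of $A^\top\Delta y=s-s^\star$. Therefore $\|\Delta y\|\le \|s-s^\star\|/\lambda_{\min}$, because $A^\top$ restricted to $\operatorname{Im}(A)$ has smallest singular value $\lambda_{\min}$.

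\textbf{Step 2 (majorize $M$).} I will show that $M\preceq 2\operatorname{diag}(\tfrac1\tau I_n,\tfrac1\sigma I_m)$. This is equivalent to
\[
\begin{pmatrix}\frac1\tau I_n & -A^\top\\ -A & \frac1\sigma I_m\end{pmatrix}\succeq 0 .
\]
This matrix is congruent to $M$ via the sign flip $\operatorname{diag}(I_n,-I_m)$, so it is PSD exactly when \eqref{eq:general_stepsize} holds. This is the only mildly nontrivial step, and the congruence argument settles it.

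\textbf{Step 3 (conclude).} With $\Delta x:=x^\star-x$, we combine Steps 1 and 2:
\[
\dist_M(z,\calX\times\calY)^2\le\|(\Delta x,\Delta y)\|_M^2\le 2\Big(\tfrac{\|\Delta x\|^2}{\tau}+\tfrac{\|\Delta y\|^2}{\sigma}\Big)\le 2\Big(\tfrac{\|x-x^\star\|^2}{\tau}+\tfrac{\|s-s^\star\|^2}{\sigma\lambda_{\min}^2}\Big).
\]
By \eqref{icemelt} the last expression is at most $2c_0^2\,\dist(w,\calX\times\calS)^2$. Taking square roots gives \eqref{eqlm change of norm}.
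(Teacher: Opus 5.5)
Your proof is correct and follows essentially the same route as the paper: majorize $M$ by $2\operatorname{diag}(\tfrac{1}{\tau}I_n,\tfrac{1}{\sigma}I_m)$, then bound the multiplier displacement by the slack displacement divided by $\lambda_{\min}$. The differences are minor. You construct the matching multiplier explicitly as $y+(AA^\top)^\dag A(s-s^\star)$ and use the least-norm property, whereas the paper argues through $\dist_{AA^\top}(y,\calY)$ with a null-space decomposition and an eigendecomposition of $AA^\top$; your sign-flip congruence also spells out the step $M\preceq 2N$, which the paper only asserts.
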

\proof{Proof.}
	Define:
	$$
		\left\|(x,y)\right\|_N := \sqrt{\frac{1}{\tau} \|x\|^2 + \frac{1}{\sigma} \|y\|^2} \ \text{ where } \
		N:= 	\begin{pmatrix}
			\frac{1}{\tau}I_n &                     \\
			                  & \frac{1}{\sigma}I_m
		\end{pmatrix} \ .
	$$
	Then $2N - M \in \mathbb{S}^{m+n}_+$ because $1/(\tau \sigma) \ge \lambda_{\max}^2$ due to \eqref{eq:general_stepsize}, and therefore
	$\|z\|_M \le \sqrt{2}\|z\|_N $ for any $z$. This means
	\begin{equation}\label{ineq key norm 1}
		\dist_M(z, \calX \times \calY) \le \sqrt{2}  \cdot \dist_N(z,\calX \times\calY) =
		\sqrt{2}\cdot \sqrt{\frac{1}{\tau}\cdot\dist(x,\calX)^2 + \frac{1}{\sigma}\cdot\dist(y,\calY)^2}
		\ .
	\end{equation}

	Next we claim that
	\begin{equation}\label{ineq key norm 2}
		\dist(y,\calY) \le \dist(s,\calS)\cdot \frac{1}{\lambda_{\min}} \ .
	\end{equation}
	Towards establishing \eqref{ineq key norm 2}, first observe that:
	$$
		\dist(s,\calS) = \dist(c - A^\top y, \calS) = \dist(c - A^\top y, c - A^\top (\calY)) = \dist(A^\top y, A^\top (\calY)) = \dist_{AA^\top} (y,\calY) \ .
	$$
	Let $AA^\top = PD^2P^\top$ denote the thin eigendecomposition of $AA^\top$, so that $P^\top P=I$ and $D$ is the diagonal matrix of positive singular values of $A$, whereby $D_{ii} \ge \min_j D_{jj} = \lambda_{\min}$ for each $i$.  Now let $\hat{y}$ solve the shortest distance problem from $y$ to $\calY$ in the norm $\| \cdot\|_{AA^\top}$, hence $\hat{y} \in \calY$ and  $\dist_{AA^\top} (y,\calY) = \| y - \hat{y}\|_{AA^\top}$, and let us write $y-\hat{y} = u + v $ where $u \in \operatorname{Im}(A)$ and $v \in \operatorname{Null}(A^\top)$. Then setting $\tilde y = \hat{y} + v$ and noting that $\tilde y \in \calY$, we have:
	\begin{equation}\label{skylight} \dist_{AA^\top}(y,\calY) \le \|y - \tilde y\|_{AA^\top} = \|u\|_{AA^\top} \ . \end{equation}
	Next notice that since $u \in \operatorname{Im}(A) = \operatorname{Im}(AA^\top)$, there exists $\pi$ for which $u=AA^\top \pi$, and define $\lambda = D^2P^\top \pi$. It then follows that $u = P \lambda$, $\lambda = P^\top u$, and $\|u\| = \|\lambda\|$.   We therefore have:
	\begin{equation}\label{firepit}\begin{aligned}
			\dist_{AA^\top} (y,\calY)^2 & = (u+v)^\top AA^\top (u+v) \\ & = u^\top AA^\top u = \lambda^\top P^\top P D^2P^\top P\lambda = \lambda^\top D^2 \lambda \ge \lambda_{\min}^2 \|\lambda\|^2 \ ,
		\end{aligned}
	\end{equation}
	and hence
	$$
		\dist(s,\calS) = \dist_{AA^\top} (y,\calY) \ge \lambda_{\min} \|\lambda\| = \lambda_{\min} \|u\| \ge \lambda_{\min}\dist(y,\calY) \ ,
	$$
	where the second inequality follows since $\tilde y\in\calY$ and $y-\tilde y=u$.  This proves \eqref{ineq key norm 2}.

	Finally, combining \eqref{ineq key norm 1} and \eqref{ineq key norm 2} we obtain
	\begin{equation}\label{ineq key norm 3}
		\dist_M(z, \calX \times \calY) \le
		\sqrt{2}\cdot \sqrt{\frac{1}{\tau}\cdot\dist(x,\calX)^2 + \frac{1}{\sigma \lambda_{\min}^2}\cdot\dist(s,\calS)^2} \le \sqrt{2}c_0\cdot \dist(w,\calX\times \calS) \ .
	\end{equation}
\Halmos\endproof

Under the presumption that $c \in \operatorname{Null}(A)$, we have the following property of the initial iterate $z^{0,0}  =(x^{0,0},y^{0,0}):=(0,0)$.
\begin{proposition}\label{lm:distance_to_optimal_initial} Suppose that $c \in \operatorname{Null}(A)$ and the initial iterate is $(z^{0,0})  =(x^{0,0},y^{0,0}):=(0,0)$, and define $w^{0,0} = (x^{0,0},s^{0,0}) := (0,c-A^\top y^{0,0})$. Then
	\begin{equation}\label{eq bound w to wstar 2}
		\dist(w^{0,0},\calW^\star) \le \dist(0,\calW^\star)   \ .
	\end{equation}
\end{proposition}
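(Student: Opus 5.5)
The plan is a direct Pythagorean argument. It uses only the structure of the dual affine space $V_d = c+\operatorname{Im}(A^\top) = c+\mathcal{L}^{\bot}$ and the presumption $c\in\mathcal{L}=\operatorname{Null}(A)$. The initial point is $w^{0,0}=(0,c-A^\top 0)=(0,c)$, so the task is to compare $\|w^{0,0}-w^\star\|$ with $\|w^\star\|$ for points $w^\star\in\calW^\star$.

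First, I will fix an arbitrary $w^\star=(x^\star,s^\star)\in\calW^\star$. Since $\calW^\star\subseteq\calF\subseteq V$, we have $s^\star\in V_d$, hence $s^\star-c\in\mathcal{L}^{\bot}$. Because $c\in\mathcal{L}$, the vectors $c$ and $s^\star-c$ are orthogonal. This gives
\[
\|s^\star\|^2=\|c+(s^\star-c)\|^2=\|c\|^2+\|s^\star-c\|^2\ \ge\ \|s^\star-c\|^2 .
\]
Consequently
\[
\|w^{0,0}-w^\star\|^2=\|x^\star\|^2+\|s^\star-c\|^2\ \le\ \|x^\star\|^2+\|s^\star\|^2=\|w^\star\|^2 .
\]

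Second, I will choose $w^\star:=P_{\calW^\star}(0)$. This projection exists because $\calW^\star$ is nonempty, closed, and convex: under Assumption~\ref{assump:striclyfeasible} it is nonempty, and it is the intersection of the closed convex set $\calF$ with a hyperplane. For this choice $\|w^\star\|=\dist(0,\calW^\star)$, and therefore
\[
\dist(w^{0,0},\calW^\star)\le\|w^{0,0}-w^\star\|\le\|w^\star\|=\dist(0,\calW^\star),
\]
which is \eqref{eq bound w to wstar 2}.

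There is no real obstacle here. The only point to verify carefully is the orthogonality step. It relies on the convention from Section~\ref{sec:get_general_clp_dual} that $c$ has been replaced by its projection onto $\mathcal{L}$, and on the fact that every dual feasible slack differs from $c$ by an element of $\operatorname{Im}(A^\top)=\mathcal{L}^{\bot}$.
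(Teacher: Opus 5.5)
Your proof is correct and is essentially the paper's argument: both rest on the Pythagorean identity $\|s\|^2=\|c\|^2+\|s-c\|^2$, valid for $s\in c+\mathcal{L}^{\bot}$ when $c\in\mathcal{L}$. The only cosmetic difference is in how it is applied. The paper splits $\dist(w^{0,0},\calW^\star)$ using the product structure $\calW^\star=\calX^\star\times\calS^\star$ and applies the identity to a minimum-norm $\hat s\in\calS^\star$. You apply it directly to the joint projection $P_{\calW^\star}(0)$, which avoids needing that product decomposition.
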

\proof{Proof.}
	Recall that $\calX^\star$ and $\calS^\star$ are the optimal primal and dual slack-variable solution sets, respectively.  Then $(x^{0,0},y^{0,0}) = (0,0)$ implies that $w^{0,0} = (x^{0,0},s^{0,0}) = (0,c)$ and hence
	\begin{equation}\label{eq bound w to wstar 1}
		\dist(w^{0,0},\calW^\star) = \sqrt{
    \dist(x^{0,0},\calX^\star)^2
    +
    \dist(s^{0,0},\calS^\star)^2} = \sqrt{
    \dist(0,\calX^\star)^2
    +
    \dist(c,\calS^\star)^2} \ .
	\end{equation}
	Let $\hat{s} \in \arg\min_{s\in\calS^\star} \| {s}\|$.  Then we have
	$$ \| \hat s \|^2 =  \| (\hat s -c) + c \|^2 = \| (\hat s -c)\|^2 + \| c \|^2 \ge \| (\hat s -c)\|^2 \ ,
	$$where the second equality follows since  $\hat{s} - c \in \mathcal{L}^{\bot}$ and $c \in \mathcal{L}$. Therefore
	$\dist(c,\calS^\star) \le \|c-\hat s\| \le \| \hat s \| = \dist(0,\calS^\star)$. Substituting this inequality into \eqref{eq bound w to wstar 1} yields
	\begin{equation}\label{eq bound w to wstar 3}
		\dist(w^{0,0},\calW^\star) \le \sqrt{\dist(0,\calX^\star)^2  + \dist(0,\calS^\star)^2 } = \dist(0,\calW^\star)   \ .
	\end{equation}\Halmos\endproof

\begin{lemma}\label{lm use gap to bound error in w}
	Suppose the step-sizes $\tau,\sigma$ satisfy \eqref{eq:general_stepsize} and the starting point is $z^{0,0} =(x^{0,0},y^{0,0}) := (0,0)$ and so $w^{0,0}= (x^{0,0},s^{0,0}) = (0,c)$. Then for any outer iteration value $n \ge 1$ it holds that:
	\begin{align}
		 & \econs(w^{n,0}) \le c_0 \cdot \rho(\| z^{n,0} - z^{n-1,0}\|_M; z^{n,0} ) \, ; \label{ineq translate feasibility error}           \\
		 & \egap(w^{n,0}) \le
		2\sqrt{2}c_0\cdot  \dist(0,\calW^\star)
		\cdot  \rho(\| z^{n,0} - z^{n-1,0}\|_M ; z^{n,0} ) \ . \label{ineq translate gap error}
	\end{align} where $c_0$ is defined in \eqref{icemelt}.
\end{lemma}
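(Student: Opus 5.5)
The plan is to apply Lemma~\ref{lm: convergence of PHDG without restart} at the restart point $\bar z := z^{n,0}$ with radius $r := \|z^{n,0}-z^{n-1,0}\|_M$, and then to control the factor $\max\{r,\|z^{n,0}\|_M\}$ that appears in the gap bound. First I will check the hypotheses. Every primal iterate $x^{n-1,k}$ with $k\ge1$ is a projection onto $K_p$. The restart point $z^{n,0}$ is an average of such iterates, so $x^{n,0}\in K_p$ by convexity of $K_p$. Also $s^{n,0}=c-A^\top y^{n,0}$ by definition.

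\textbf{Constraint bound.} Items~\ref{item_gap1} and~\ref{item_gap2} of Lemma~\ref{lm: convergence of PHDG without restart} give
\[
\dist(w^{n,0},V)\le \tfrac{1}{\sqrt\sigma\lambda_{\min}}\rho, \qquad \dist(w^{n,0},K)\le\tfrac{1}{\sqrt\tau}\rho ,
\]
where $\rho=\rho(\|z^{n,0}-z^{n-1,0}\|_M;z^{n,0})$. Taking the maximum and recalling the definition \eqref{icemelt} of $c_0$ yields \eqref{ineq translate feasibility error}.

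\textbf{Gap bound.} Item~\ref{item_gap3} gives $\gap(w^{n,0})\le \max\{r,\|z^{n,0}\|_M\}\,\rho$. The right-hand side is nonnegative, so the same bound holds for $\egap(w^{n,0})=\max\{0,\gap(w^{n,0})\}$. It remains to show
\[
\max\{\|z^{n,0}-z^{n-1,0}\|_M,\|z^{n,0}\|_M\}\le 2\sqrt2 c_0\dist(0,\calW^\star).
\]
I will use Lemma~\ref{lm: R in the opt gap convnergence} with $z^a:=z^{0,0}=0$, $z^b:=z^{n,0}$, $z^c:=z^{n-1,0}$. To verify its nonexpansive hypotheses, I apply Lemma~\ref{lm: nonexpansive property} within each inner loop: the average $\bar z^{j,k}$ satisfies $\|\bar z^{j,k}-z^\star\|_M\le\|z^{j,0}-z^\star\|_M$ for every $z^\star\in\calZ^\star$. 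Hence $\|z^{j+1,0}-z^\star\|_M\le\|z^{j,0}-z^\star\|_M$, and chaining over $j$ gives $\|z^{n,0}-z^\star\|_M\le\|z^{n-1,0}-z^\star\|_M\le\|z^{0,0}-z^\star\|_M$. Lemma~\ref{lm: R in the opt gap convnergence} then yields
\[
\max\{\|z^{n,0}-z^{n-1,0}\|_M,\|z^{n,0}\|_M\}\le 2\dist_M(0,\calZ^\star)+\|0\|_M = 2\dist_M(0,\calZ^\star).
\]

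\textbf{Change of norm.} Next I apply Lemma~\ref{lm change of norm} with $\calX:=\calX^\star$ and $\calS:=\calS^\star\subseteq c+\operatorname{Im}(A^\top)$. The corresponding set $\calY$ is then $\calY^\star$. These sets are nonempty, closed and convex; nonemptiness follows from Assumption~\ref{assump:striclyfeasible}/strong duality, and this is the one point I need to state explicitly. At $z=0$ we have $w=w^{0,0}=(0,c)$, so
\[
\dist_M(0,\calZ^\star)\le\sqrt2 c_0\dist(w^{0,0},\calW^\star)\le \sqrt2c_0\dist(0,\calW^\star),
\]
where the last step is Proposition~\ref{lm:distance_to_optimal_initial}. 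Combining the three displays gives the factor $2\sqrt2c_0\dist(0,\calW^\star)$ and hence \eqref{ineq translate gap error}.

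The only slightly delicate step is the verification of the nonexpansive chain for the restart points, which requires composing Lemma~\ref{lm: nonexpansive property} across outer loops. The remaining steps are direct substitutions.
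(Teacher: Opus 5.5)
Your proposal is correct and follows essentially the same route as the paper. Items 1--2 of Lemma~\ref{lm: convergence of PHDG without restart} give $\econs$, and Lemma~\ref{lm: R in the opt gap convnergence} with $(z^a,z^b,z^c)=(z^{0,0},z^{n,0},z^{n-1,0})$, combined with Lemma~\ref{lm change of norm} and Proposition~\ref{lm:distance_to_optimal_initial}, controls $\max\{r,\|z^{n,0}\|_M\}$ in item 3. Your explicit checks that $x^{n,0}\in K_p$ and of the nonexpansive chain across restarts supply details the paper leaves implicit; note also that Proposition~\ref{lm:distance_to_optimal_initial} relies on the standing convention $c\in\mathcal{L}$.
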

\proof{Proof.}
	The inequality \eqref{ineq translate feasibility error} follows directly from items ({\it \ref{item_gap1}.})  and ({\it \ref{item_gap2}.}) of Lemma \ref{lm: convergence of PHDG without restart}. Towards the proof of \eqref{ineq translate gap error}, note that $z^a := z^{0,0}$, $z^b:=z^{n,0}$, and $z^c:= z^{n-1,0}$ satisfy the nonexpansive conditions of Lemma \ref{lm: R in the opt gap convnergence} whereby it follows from Lemma \ref{lm: R in the opt gap convnergence} that
	$$
		\max\{ \|z^{n,0} - z^{n-1,0}\|_M, \|z^{n,0}\|_M\}  \le    2 \cdot \dist_M(z^{0,0},\calZ^\star)  + \|z^{0,0} \|_M = 2 \cdot \dist_M(z^{0,0},\calZ^\star) \ ,
	$$
	where the equality follows since $z^{0,0} = 0$. And applying Lemma \ref{lm change of norm} with $(\calX,\calS,\calY):=(\calX^\star,\calS^\star,\calY^\star)$, together with Proposition \ref{lm:distance_to_optimal_initial}, we obtain	$$
		\dist_M(z^{0,0},\calZ^\star)  \le \sqrt{2}c_0 \cdot \dist(w^{0,0}, \calW^\star)  \le \sqrt{2}c_0 \cdot \dist(0, \calW^\star)  \ .
	$$
	Inequality \eqref{ineq translate gap error} then follows directly from the above two inequalities and ({\it \ref{item_gap3}.}) of Lemma \ref{lm: convergence of PHDG without restart} using $r:= \|z^{n,0} - z^{n-1,0}\|_M$.
	This completes the proof.
\Halmos\endproof

We can now combine these estimates. Lemma \ref{lm change of norm} relates the $M$-distance in Theorem \ref{thm: complexity of PDHG with adaptive restart} to distances in the cone variables, Lemma \ref{lm use gap to bound error in w} bounds feasibility and gap errors by the normalized duality gap, and Lemma \ref{lm efeas bound dist} converts these errors into a distance to $\calW^\star$. The next lemma combines these results into a relationship of the form \eqref{eq restart L C condition}.

\begin{lemma}\label{thm L C}
	Suppose that $c \in \operatorname{Null}(A)$. Under Assumption \ref{assump:striclyfeasible}, suppose that Algorithm \ref{alg: PDHG with restarts} (rPDHG) is run starting from $z^{0,0} = (x^{0,0},y^{0,0}) = (0,0)$, and the step-sizes $\sigma$ and $\tau$ satisfy the step-size inequality \eqref{eq:general_stepsize}. Then for every $n\ge 1$ and any $\delta > 0$, it holds that
	\begin{equation}\label{eq thm L C}
		\dist_M(z^{n,0},\calZ^\star) \le 8.25 \cdot c_0^2  \cdot \frac{D_\delta}{r_\delta} \cdot \rho(\| z^{n,0} - z^{n-1,0}\|_M; z^{n,0}) + \sqrt{2}c_0 d^H_\delta \ .
	\end{equation}
\end{lemma}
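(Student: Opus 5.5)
The plan is to chain the earlier lemmas, passing from the $M$-distance in the saddlepoint space to the Euclidean distance in the cone variables, then to the level-set error bound, and then back to the normalized duality gap. Fix $n\ge1$ and $\delta>0$, and write $\rho:=\rho(\|z^{n,0}-z^{n-1,0}\|_M;z^{n,0})$. Set $w^{n,0}=(x^{n,0},s^{n,0})$ with $s^{n,0}=c-A^\top y^{n,0}$.

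\textbf{Step 1 (change of norm).} Since $\calS^\star\subseteq c+\operatorname{Im}(A^\top)$ and $\calY^\star=\{y: c-A^\top y\in\calS^\star\}$, Lemma~\ref{lm change of norm} applies with $(\calX,\calS,\calY):=(\calX^\star,\calS^\star,\calY^\star)$. It gives
\[
\dist_M(z^{n,0},\calZ^\star)\le\sqrt2\,c_0\,\dist(w^{n,0},\calW^\star).
\]

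\textbf{Step 2 (error bound).} Apply Lemma~\ref{lm efeas bound dist}, which uses $c\in\operatorname{Null}(A)$, to $w^{n,0}$. I then bound $\max\{\gap(w),\delta\}\le\egap(w)+\delta$. This gives
\[
\dist(w^{n,0},\calW^\star)\le \tfrac{3D_\delta}{r_\delta}\econs(w^{n,0})+\tfrac{d^H_\delta}{\delta}\egap(w^{n,0})+d^H_\delta .
\]
Next I use Lemma~\ref{lm use gap to bound error in w}, which gives $\econs\le c_0\rho$ and $\egap\le 2\sqrt2 c_0\dist(0,\calW^\star)\rho$.

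\textbf{Step 3 (the main obstacle).} The hard part is converting the factor $\frac{d^H_\delta}{\delta}\dist(0,\calW^\star)$ into a multiple of $D_\delta/r_\delta$, since $\delta$ does not appear in the target bound.
\begin{itemize}
\item By the upper bound in Lemma~\ref{lm r_delta over delta}, $r_\delta/\delta\le 1/\max_{w\in\calW^\star}\|w\|$.
\item Since $\dist(0,\calW^\star)\le\max_{w\in\calW^\star}\|w\|$, this gives $\dist(0,\calW^\star)/\delta\le 1/r_\delta$.
\item By Lemma~\ref{lm D ge r}, $d^H_\delta\le D_\delta$.
\end{itemize}
Together these give $\frac{d^H_\delta}{\delta}\dist(0,\calW^\star)\le D_\delta/r_\delta$.

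\textbf{Step 4 (assembly).} Combining Steps 1--3,
\[
\dist_M(z^{n,0},\calZ^\star)\le \sqrt2 c_0\Big[(3+2\sqrt2)\,c_0\tfrac{D_\delta}{r_\delta}\rho+d^H_\delta\Big]
=(3\sqrt2+4)\,c_0^2\tfrac{D_\delta}{r_\delta}\rho+\sqrt2 c_0 d^H_\delta .
\]
Finally, $3\sqrt2+4\approx8.243\le 8.25$, which yields \eqref{eq thm L C}.
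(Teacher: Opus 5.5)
Your proposal is correct and follows the paper's proof. It uses the same chain: Lemma~\ref{lm change of norm}, then Lemma~\ref{lm efeas bound dist}, then Lemma~\ref{lm use gap to bound error in w}, and finally $\dist(0,\calW^\star)\le\delta/r_\delta$ (from Lemma~\ref{lm r_delta over delta}) together with $d^H_\delta\le D_\delta$. It arrives at the same constant $3\sqrt2+4\le 8.25$.

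The only difference is cosmetic. You replace the paper's case split on whether $\gap(w^{n,0})\ge\delta$ with the single valid estimate $\max\{\gap(w^{n,0}),\delta\}\le\egap(w^{n,0})+\delta$. This just adds the two case bounds and gives the same final inequality.
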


\proof{Proof.}
	Applying Lemma \ref{lm change of norm} with $(\calX,\calS,\calY):=(\calX^\star,\calS^\star,\calY^\star)$ and Lemma \ref{lm efeas bound dist} directly yields:
	\begin{equation}\label{eq L C 2}
		\begin{aligned}
			\dist_M(z^{n,0},\calZ^\star) & \ \le  \sqrt{2}c_0\cdot \dist(w^{n,0},\calW^\star)
			\\
			                             & \
			\le  \frac{3\sqrt{2} c_0 D_\delta}{r_\delta}\cdot  \max\{\dist(w^{n,0},V),\dist(w^{n,0},K)\}+  \frac{\sqrt{2}c_0 d^H_\delta}{\delta} \cdot \max\{\gap(w^{n,0}),\delta\} \ .
		\end{aligned}
	\end{equation}

	\noindent We consider two cases, depending on whether $\gap(w^{n,0})\ge \delta$ or $\gap(w^{n,0})<\delta$. We first consider the case where $\gap(w^{n,0})\ge \delta$.  From Lemma \ref{lm use gap to bound error in w} and \eqref{eq L C 2} it follows that
	\begin{equation}\label{eq L C 3}
		\dist_M(z^{n,0},\calZ^\star)
		\le \left( \frac{3\sqrt{2} c_0^2 D_\delta}{r_\delta} +  \frac{4c_0^2 d^H_\delta}{\delta}
		\cdot   \dist(0,\calW^\star)
		\right)
		\cdot  \rho(\| z^{n,0} - z^{n-1,0}\|_M ; z^{n,0} ) \ .
	\end{equation}
	Furthermore, applying Lemma \ref{lm r_delta over delta} yields $
		\dist(0,\calW^\star) \le \max_{w\in\calW^\star}\|w\| \le \delta/r_\delta$ for any $\delta > 0$, and  substituting this into \eqref{eq L C 3} yields
	\begin{equation}\label{eq L C 4}
		\dist_M(z^{n,0},\calZ^\star)
		\le \left( \frac{3\sqrt{2} c_0^2 D_\delta}{r_\delta} +  \frac{4c_0^2 d^H_\delta}{r_\delta}
		\right)
		\cdot  \rho(\| z^{n,0} - z^{n-1,0}\|_M ; z^{n,0} ) \ .
	\end{equation}
	We can also have $d^H_\delta \le D_\delta$ and $3\sqrt{2} + 4 \le 8.25$, which when combined with \eqref{eq L C 4} shows that
	\begin{equation}\label{eq L C 5}
		\begin{aligned}
			\dist_M(z^{n,0},\calZ^\star) \le 8.25 \cdot c_0^2  \cdot \frac{D_\delta}{r_\delta} \cdot \rho(\| z^{n,0} - z^{n-1,0}\|_M; z^{n,0}) \ ,
		\end{aligned}
	\end{equation}
	which proves the result in this case.

	Let us now consider the case where $\gap(w^{n,0}) < \delta$. From Lemma \ref{lm use gap to bound error in w} and \eqref{eq L C 2} it follows that
	\begin{equation}\label{eq L C 6}
		\begin{aligned}
			\dist_M(z^{n,0},\calZ^\star) \le 3\sqrt{2} c_0^2 \cdot \frac{ D_\delta}{r_\delta} \cdot  \rho(\| z^{n,0} - z^{n-1,0}\|_M; z^{n,0}) + \sqrt{2}c_0 d^H_\delta \ ,
		\end{aligned}
	\end{equation}
	which proves the result in this case.	Depending on the case, we obtain either \eqref{eq L C 5} or \eqref{eq L C 6}, either of which implies \eqref{eq thm L C}.
\Halmos\endproof

At long last we are now in position to complete the proof of Theorem \ref{thm overall complexity clp}.
\proof{Proof of Theorem \ref{thm overall complexity clp}.}
	From Lemma \ref{lm use gap to bound error in w} it follows that $\econs(w^{n,0})\le\eps_{\mathrm{cons}}$ and $\egap(w^{n,0})\le\eps_{\mathrm{gap}}$ if
	\begin{equation}\label{eq overall complexity clp 1}
		\rho(\| z^{n,0} - z^{n-1,0}\|_M ; z^{n,0} ) \le \frac{1}{c_0}\left(
		\eps_{\mathrm{cons}}\wedge
		\frac{\sqrt{2}\eps_{\mathrm{gap}}}{4 \cdot \dist(0,\calW^\star)}
		\right) \ .
	\end{equation}
	It follows from the choice of step-sizes in the theorem and the definition of $c_0$ in \eqref{icemelt} that $c_0 = \sqrt{\kappa}$.

	In the proof of Lemma \ref{thm L C} we see that \eqref{eq thm L C} holds for any $\delta > 0$, so Theorem \ref{thm: complexity of PDHG with adaptive restart} can be applied since the condition \eqref{eq restart L C condition} is satisfied using \eqref{eq thm L C} with $\condG = \frac{8.25 c_0^2 D_\delta}{r_\delta}$ and $\condC = \sqrt{2} c_0 d^H_\delta$. Therefore it follows from Theorem \ref{thm: complexity of PDHG with adaptive restart} that $T$ satisfies
	\begin{equation}\label{eq overall complexity clp 2}\begin{aligned}
			T & \le 12 \left(8.25 \cdot c_0^2\cdot \frac{D_\delta}{r_\delta} \right) \cdot  \ln\left(\frac{12 c_0\cdot \dist_M(z^{0,0},\calZ^\star)}{\eps_{\mathrm{cons}}\wedge\frac{\sqrt{2}\eps_{\mathrm{gap}}}{4 \cdot \dist(0,\calW^\star)}}\right)  +  \frac{18 \sqrt{2}c_0^2d^H_\delta}{\eps_{\mathrm{cons}}\wedge\frac{\sqrt{2}\eps_{\mathrm{gap}}}{4 \cdot \dist(0,\calW^\star)}}                                                                                                                                                                                          \\ \\
			  & = 12  \left(8.25 \cdot \kappa \cdot \frac{D_\delta}{r_\delta} \right) \cdot  \ln\left(\frac{12 \sqrt{\kappa} \cdot \dist_M(z^{0,0},\calZ^\star)}{\eps_{\mathrm{cons}}\wedge\frac{\sqrt{2}\eps_{\mathrm{gap}}}{4 \cdot \dist(0,\calW^\star)}} \right)  
			+  \frac{18 \sqrt{2}\cdot \kappa \cdot d^H_\delta}{\eps_{\mathrm{cons}}\wedge\frac{\sqrt{2}\eps_{\mathrm{gap}}}{4 \cdot \dist(0,\calW^\star)}}                                                                                                                                                                            \\ \\
			  & \le  12 \left(8.25  \kappa \cdot \frac{D_\delta}{r_\delta} \right) \cdot  \ln\left(\frac{12\sqrt{2} \kappa  \cdot \dist(0,\calW^\star)}{\eps_{\mathrm{cons}}\wedge\frac{\sqrt{2}\eps_{\mathrm{gap}}}{4 \cdot \dist(0,\calW^\star)}} \right)  
			+  \frac{18\sqrt{2}  \kappa \cdot d^H_\delta}{\eps_{\mathrm{cons}}\wedge\frac{\sqrt{2}\eps_{\mathrm{gap}}}{4 \cdot \dist(0,\calW^\star)}} \ ,
		\end{aligned}\end{equation}
	where the equality uses $c_0=\sqrt{\kappa}$, and the second inequality uses $\dist_M(z^{0,0},\calZ^\star) \le  \sqrt{2}c_0 \cdot \dist(w^{0,0},\calW^\star)\le  \sqrt{2}c_0 \cdot \dist(0,\calW^\star) = \sqrt{2\kappa}\cdot \dist(0,\calW^\star)$ from Lemma \ref{lm change of norm}, applied with $(\calX,\calS,\calY):=(\calX^\star,\calS^\star,\calY^\star)$, and Proposition \ref{lm:distance_to_optimal_initial}.  Last of all, notice that $12 \times 8.25\le 99$, $12\sqrt{2}\le 17$ and $18\sqrt{2} \le 26$. Substituting these bounds into \eqref{eq overall complexity clp 2} yields \eqref{eq overall complexity} and completes the proof.
\Halmos\endproof

\section{Linear convergence of rPDHG for linear optimization}\label{sec:complexity_lp}

In the case of linear optimization problems (instances of \eqref{pro: general primal clp} with $K_p = \mathbb{R}^n_+$) we present in this section a global linear convergence bound for rPDHG that structurally improves on the bound in \cite{xiong2023computational}. Recall that $\mathcal{L}=\operatorname{Null}(A)$ and $\mathcal{L}^{\bot}=\operatorname{Im}(A^\top)$ are the linear subspaces associated with $V_p$ and $V_d$, respectively. Our analysis uses the ``best suboptimal extreme point gap'' $\bar \delta$ whose formal definition we now state.\medskip
\begin{definition}[Best suboptimal extreme point gap]\label{def:best_suboptimal_gap}
	Let $\ep_\calF$ denote the set of extreme points of $\calF$. The best suboptimal extreme point gap $\bar{\delta}$ is defined as follows:
	\begin{equation}\label{eqdef:best_suboptimal_gap}
		\bar{\delta}:= \left\{
		\begin{array}{ll}
			\min\{\gap(w): w\in \ep_\calF\setminus \calW^\star\} & \quad \text{ if $\ep_\calF\setminus \calW^\star \neq \emptyset$ } \\
			+\infty                                              & \quad \text{ if $\ep_\calF\setminus \calW^\star = \emptyset$ .}   \\
		\end{array}
		\right.
	\end{equation}
\end{definition}
\noindent We now present our global linear convergence result for rPDHG on LP instances.
\begin{theorem}\label{thm overall complexity lp} Suppose that
	\eqref{pro: general primal clp} is a linear optimization instance ($K_p = \mathbb{R}^n_+$) and $c\in\mathcal{L}$, and let $\bar\delta$ be as defined in \eqref{eqdef:best_suboptimal_gap}. Under Assumption \ref{assump:striclyfeasible}, suppose that Algorithm \ref{alg: PDHG with restarts} (rPDHG) is run starting from $z^{0,0} = (x^{0,0},y^{0,0} ) = (0,0)$ using the $\beta$-restart condition with $\beta := 1/e$, and the standard step-sizes $\sigma=\sigma_0$ and $\tau = \tau_0$ as in \eqref{eq:standard_step_size}.   
	Given $\eps_{\mathrm{cons}},\eps_{\mathrm{gap}}>0$, let $T$ be the total number of \textsc{OnePDHG} iterations that are run in order to obtain $n$ for which $w^{n,0}=(x^{n,0},s^{n,0})$ satisfies $\econs(w^{n,0})\le \eps_{\mathrm{cons}}$ and $\egap(w^{n,0})\le \eps_{\mathrm{gap}}$.
	Then
	\begin{equation}\label{eq overall complexity LP}
		T \le 133 \kappa \cdot \left( \inf_{0 < \delta \le \bar{\delta}}\frac{D_{\delta}}{r_{\delta}}\right) \cdot   \left[\ln\big(17 \kappa \cdot \dist(0,\calW^\star) \big) + 
		\ln\left(\frac{1}{\eps_{\mathrm{cons}}} \vee 
		\frac{2\sqrt{2} \dist(0,\calW^\star)}{\eps_{\mathrm{gap}}}\right) \right] \,  . 
	\end{equation}
\end{theorem}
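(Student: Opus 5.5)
The plan is to verify condition \eqref{eq restart L C condition} of Theorem \ref{thm: complexity of PDHG with adaptive restart} with $\condC = 0$ and $\condG = O(c_0^2 D_\delta/r_\delta)$ for any $0<\delta\le\bar\delta$. The linear term then vanishes, and the bound \eqref{eq overall complexity LP} follows exactly as in the proof of Theorem \ref{thm overall complexity clp}. There, $c_0=\sqrt{\kappa}$, Lemma \ref{lm use gap to bound error in w} translates $\rho$ into $\econs,\egap$, and Lemma \ref{lm change of norm} with Proposition \ref{lm:distance_to_optimal_initial} bounds $\dist_M(z^{0,0},\calZ^\star)$. The larger constant $133$ (versus $99$) should come from the extra constants in the LP-specific error bound below.

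The key new ingredient is a polyhedral strengthening of Lemma \ref{lm efeas bound dist} in which the additive term $d^H_\delta$ is removed. For LP and $0<\delta\le\bar\delta$ I will show that
$$\dist(w,\calW^\star)\le \frac{d^H_\delta}{\delta}\gap(w)\quad\text{for all } w\in\calF .$$
To see this, note that $\calF$ is a polyhedron. Under Assumption \ref{assump:striclyfeasible} the level sets are bounded, so $\calW_{\bar\delta}$ is a polytope. Every vertex of $\calW_{\bar\delta}$ is either a vertex of $\calF$, which by definition of $\bar\delta$ lies in $\calW^\star$ or has gap at least $\bar\delta$, or it lies on the hyperplane $\{\gap=\bar\delta\}$. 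Hence $\calW_{\delta}=\operatorname{conv}(\calW^\star\cup(\calF\cap\{\gap=\delta\}))$ for $\delta\le\bar\delta$. Equivalently, each $w\in\calW_\delta$ is a convex combination $w=(1-t)w^\star+t u$ with $w^\star\in\calW^\star$, $\gap(u)=\delta$, and $t=\gap(w)/\delta$. Convexity of $\dist(\cdot,\calW^\star)$ then gives $\dist(w,\calW^\star)\le t\,d^H_\delta$. For $w\in\calF$ with $\gap(w)>\delta$, the scaling argument of Proposition \ref{lm:convexity_ineq} along the ray from $P_{\calW^\star}(w)$ extends the inequality, since the ray passes through $\calF$ by convexity.

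Next I combine this with Lemma \ref{lm error bound R r}, following the proof of Lemma \ref{lm efeas bound dist}. For arbitrary $w$, let $\hat w=P_V(w)$. If $\gap(\hat w)\le\delta$, take $v\in\calW_\delta$ with $\|\hat w-v\|\le (D_\delta/r_\delta)\dist(\hat w,K)$. Then
$$\dist(w,\calW^\star)\le \dist(w,V)+\|\hat w-v\|+\frac{d^H_\delta}{\delta}\gap(v),$$
and $\gap(v)\le\gap(\hat w)+\|c,q\|\cdot\|\hat w-v\|$. Here I should avoid the factor $\|(c,q)\|$. Instead I use $\gap(v)\le\delta$ and the ratio directly, or, more cleanly, bound $d^H_\delta/\delta\le D_\delta/\delta$ together with Lemma \ref{lm r_delta over delta}, which gives $\dist(0,\calW^\star)\le\delta/r_\delta$. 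Plugging in Lemma \ref{lm use gap to bound error in w} turns the gap term into $(d^H_\delta/\delta)\cdot 2\sqrt2 c_0\dist(0,\calW^\star)\rho\le 2\sqrt2 c_0 (D_\delta/r_\delta)\rho$. The feasibility terms give $O(c_0 D_\delta/r_\delta)\rho$. Multiplying by $\sqrt2 c_0$ from Lemma \ref{lm change of norm} yields $\dist_M(z^{n,0},\calZ^\star)\le C c_0^2 (D_\delta/r_\delta)\,\rho(\|z^{n,0}-z^{n-1,0}\|_M;z^{n,0})$ with $C\approx 11$. With $12C\le133$, Theorem \ref{thm: complexity of PDHG with adaptive restart} gives \eqref{eq overall complexity LP}, and taking the infimum over $\delta\in(0,\bar\delta]$ finishes the proof.

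The main obstacle is the middle step: controlling the gap of the projected point $v$ so that the bound stays homogeneous in $\rho$ without introducing data norms. I would first handle it by applying the homogenized inequality to $\hat w$ itself rather than $v$, via a convexity/scaling argument as in Lemma \ref{lm efeas bound dist}. Specifically, I would define $g(t)=\frac{3D_\delta}{r_\delta}\econs(w_t)+\frac{d^H_\delta}{\delta}\max\{\gap(w_t),0\}$ along the ray from $P_{\calW^\star}(w)$ and verify $g\ge f$ at small $t$ using the polyhedral identity above. The vertex characterization of $\calW_{\bar\delta}$ also needs care: the case $\gap<0$ and the boundedness of $\calW_\delta$ under Assumption \ref{assump:striclyfeasible} must both be handled.
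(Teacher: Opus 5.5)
Your reduction to Theorem~\ref{thm: complexity of PDHG with adaptive restart} with $\condC=0$ is the right plan, and your polyhedral claim is correct. For $0<\delta\le\bar\delta$, every vertex of the polytope $\calW_\delta$ lies in $\calW^\star\cup(\calF\cap\{\gap=\delta\})$, so $\dist(w,\calW^\star)\le\frac{d^H_\delta}{\delta}\gap(w)$ on $\calF$. This is an elementary, self-contained substitute for the paper's route through the PD sharpness $\mu$ (Lemmas~\ref{lm:LPsharpness} and~\ref{lm:lowerbound_mu}, which rely on Theorem~5.2 of \cite{xiong2023computational}). Both routes deliver the same inequality for feasible points.

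The genuine gap is the step you flag yourself. You need to bound $\frac{d^H_\delta}{\delta}\gap(v)$ for the auxiliary feasible point $v$ near $\hat w=P_V(w)$ by $\frac{d^H_\delta}{\delta}\gap(\hat w)+O(D_\delta/r_\delta)\dist(\hat w,K)$, and none of your options does this.
\begin{itemize}
\item Using $\gap(v)\le\delta$ brings back the additive $d^H_\delta$, i.e.\ $\condC=\sqrt2 c_0 d^H_\delta$ as in Lemma~\ref{thm L C}, which destroys linear convergence.
\item The bound $\dist(0,\calW^\star)\le\delta/r_\delta$ combined with Lemma~\ref{lm use gap to bound error in w} only controls $\gap(w^{n,0})=\gap(\hat w)$. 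It does not control $\gap(v)-\gap(\hat w)$, because $v$ is not an iterate.
\item The scaling fix is circular. Proposition~\ref{lm:convexity_ineq} only transports $g\ge f$ from smaller $t$ to larger $t$. Verifying $g\ge f$ at small $t$ is exactly the original problem for an infeasible point with small gap, and your polyhedral identity says nothing about infeasible points.
\end{itemize}

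The missing idea is to take $v:=\calF(\hat w;w_\delta)$, the boundary point on the segment from $\hat w$ to the conic center $w_\delta$. Since $\gap$ is affine and $\hat w$, $v$, $w_\delta$ are collinear,
\[
\gap(v)-\gap(\hat w)=\big(\gap(w_\delta)-\gap(v)\big)\cdot\frac{\|\hat w-v\|}{\|v-w_\delta\|}\le \delta\cdot\frac{\dist(\hat w,K)}{r_\delta}.
\]
This uses $\gap(v),\gap(w_\delta)\in[0,\delta]$ and the middle inequality of Lemma~\ref{lm error bound R r}. Hence, using $d^H_\delta\le D_\delta$,
\[
\frac{d^H_\delta}{\delta}\gap(v)\le \frac{d^H_\delta}{\delta}\gap(\hat w)+\frac{D_\delta}{r_\delta}\dist(\hat w,K).
\]
This yields Lemma~\ref{lm c123 sharp LP} with constant $5$, valid even when $\gap(w)<0$. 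It then gives $\condG=(5\sqrt2+4)c_0^2D_\delta/r_\delta$, and $12(5\sqrt2+4)\le 133$. With this step inserted, and your ray argument handling $\gap(w)>\delta$, your proof closes.
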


Theorem \ref{thm overall complexity lp} is the formal statement of Main Result~\ref{mainres:lp} presented in Section \ref{sec:intro_preview}. The computational bound \eqref{eq overall complexity LP} depends on the target tolerances only through the logarithm term (in contrast to the bound in Theorem \ref{thm overall complexity clp}). Thus if there exists a $\delta$-level set for some $\delta \in (0,\bar{\delta}]$ such that the corresponding ratio $\frac{D_{\delta}}{r_{\delta}}$ is not too large, then rPDHG will have fast linear convergence for that LP instance. In addition to $\kappa$, the coefficient outside the logarithm term in \eqref{eq overall complexity LP} is controlled by the best level-set ratio up to the first suboptimal extreme point gap. We thus interpret Theorem \ref{thm overall complexity clp} together with Theorem \ref{thm overall complexity lp}: the former states a very flexible result based on the best level set that trades off the geometric condition measures, whereas the latter presents a global linear convergence guarantee in the LP case.


Note that the bound \eqref{eq overall complexity LP} is decreasing in $\bar\delta$, and so a smaller best suboptimal extreme point gap $\bar\delta$ can lead to a worse bound. Geometrically, a small $\bar\delta$ means that a suboptimal extreme point is nearly optimal, so the instance is close to having additional optimal extreme points (a small data perturbation would result in having additional optima). In this sense Theorem \ref{thm overall complexity lp} suggests that being close to having more optimal solutions can hurt the convergence rate, even though multiple optimal solutions themselves do not necessarily do so. This is similar in spirit to the observation of \cite{lu2023geometry} for PDHG without restarts, where degeneracy itself does not hurt the convergence rate, but being close to degeneracy does.  

Of course Theorem \ref{thm overall complexity lp} is not the first linear convergence result for PDHG on LP; \cite{applegate2023faster} uses the Hoffman constant of the KKT system to characterize linear convergence, but the global Hoffman constant is often overly conservative and hard to analyze \cite{lu2023geometry,xiong2023computational}. \cite{lu2023geometry} studies PDHG without restarts and uncovers a refined two-phase behavior. \cite{xiong2023computational,xiong2023relation} provide linear convergence guarantees based on limiting error ratios and LP sharpness. Theorem \ref{thm overall complexity lp} shows that the linear convergence rate can be bounded using only $\kappa$ and the level-set condition measures $D_\delta$ and $r_\delta$.

Theorem \ref{thm overall complexity lp} is stated for the standard step-sizes $\sigma_0$ and $\tau_0$ as in \eqref{eq:standard_step_size}. If using other ratios between $\sigma$ and $\tau$, applying Theorem \ref{thm overall complexity lp} on the corresponding reweighted instance suffices; see Remark \ref{rmk:step-size-vs-reweighting}.

\subsection{An illustrative family of LP instances}\label{subsec:illustrative_lp_instance}

In this section we study a simple family of LP instances to illustrate several aspects of the advantages/disadvantages of the bounds in Theorem \ref{thm overall complexity lp} and Theorem \ref{thm overall complexity clp}.  
We consider the following LP instances parameterized by $\nu \ge 0$:
\Equationvalidatefalse 
\begin{equation}\tag{$P_\nu$}\label{pro:example_lp_validate}
	\begin{array}{c}
		\min_{x = (x_1,x_2,x_3) \in \mathbb{R}^3_+} \frac{2 + \nu }{10}\cdot x_1 + x_2 + (1 + \nu)x_3 \ \ \text{s.t. } -10x_1 + x_2 + x_3  = 1  \ .
	\end{array}
\end{equation}
\Equationvalidatetrue%
For this family of instances we compute explicit values of the bounds in Theorems \ref{thm overall complexity lp} and \ref{thm overall complexity clp} by setting $\eps_{\,\mathrm{cons}}=\eps_{\,\mathrm{gap}} = \bar\eps$ for a given uniform tolerance value $\bar\eps$. 

We first examine the complexity bound functions $T_\delta$ defined in \eqref{eq overall complexity} and derived in Theorem \ref{thm overall complexity clp} for the instance \eqref{pro:example_lp_validate} with $\nu=10^{-4}$. Figure \ref{fig:Tdelta and actual T} presents plots of $T_\delta$ for this instance for $\delta=10^{-1}$ and $\delta=10^{-8}$ as a function of the uniform tolerance value $\bar\eps$ (left subfigure), as well as the lower envelope $\inf_{\delta>0}T_\delta$ and the actual iteration counts of rPDHG (right subfigure). The figure illustrates that different level sets can yield better bounds at different target tolerances. For moderate tolerances, a larger and better-shaped level set results in a more informative bound; for smaller tolerances, smaller level sets become relevant and the bound transitions to the linear-convergence regime. Although the theoretical envelope is off by a large constant factor from the actual iteration counts of rPDHG for this instance, the envelope captures the qualitative transition of the actual iteration counts better than a purely asymptotic characterization.

\begin{figure}[htbp]
	\centering
	\includegraphics[width=0.65\textwidth]{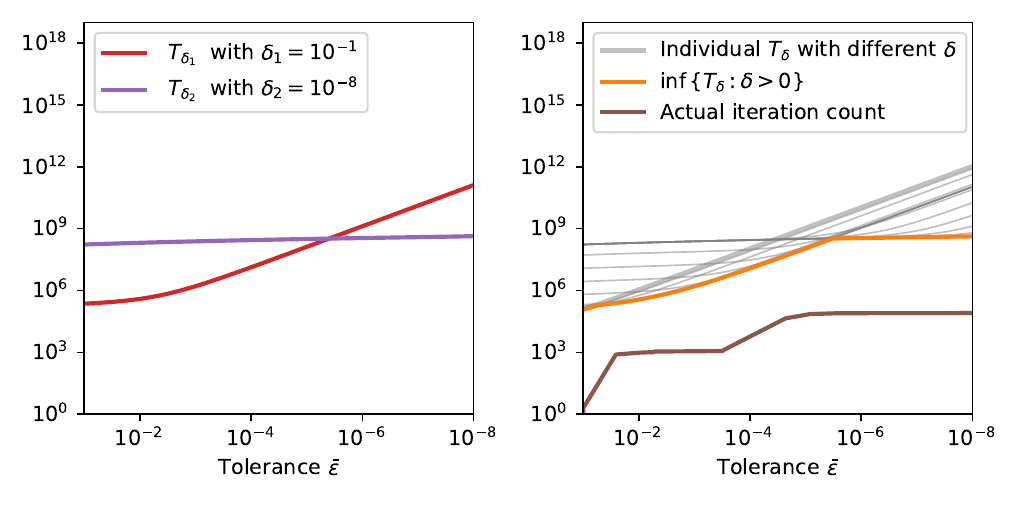}\vspace{-10pt}
	\caption{(Left) Plots of $T_\delta$ for $\delta=10^{-1}$ and $\delta=10^{-8}$, as a function of the required tolerance $\bar\eps = \eps_{\mathrm{cons}} = \eps_{\mathrm{gap}} $ for the linear optimization instance \eqref{pro:example_lp_validate} with $\nu=10^{-4}$. (Right) Plots of the actual iteration counts of rPDHG, and the lower envelope $\inf_{\delta >0} T_\delta$.}\label{fig:Tdelta and actual T}
\end{figure}

For $\nu=0$ the LP instance \eqref{pro:example_lp_validate} has multiple primal optimal solutions, namely $x^\star =(0, \alpha, 1-\alpha)$ for all $\alpha \in [0,1]$. Figure \ref{fig:two_thms_easy_hard_LP_paper} compares the bounds from Theorems \ref{thm overall complexity clp} and \ref{thm overall complexity lp} with the actual number of iterations required to satisfy $\econs(w)\le \bar{\eps}$ and $\egap(w)\le \bar{\eps}$, for both $\nu=0$ and $\nu=10^{-4}$. For $\nu=0$, the bound in Theorem \ref{thm overall complexity clp} does not assert linear convergence because the optimal solution set is not a singleton, whereas the bound in Theorem \ref{thm overall complexity lp} presents a linear convergence guarantee. For $\nu=10^{-4}$, Theorem \ref{thm overall complexity lp} presents a linear rate, but the rate does not capture the early-stage behavior as well as the more flexible bound in Theorem \ref{thm overall complexity clp}.

\begin{figure}[htbp]
	\centering
	\includegraphics[width=0.65\linewidth]{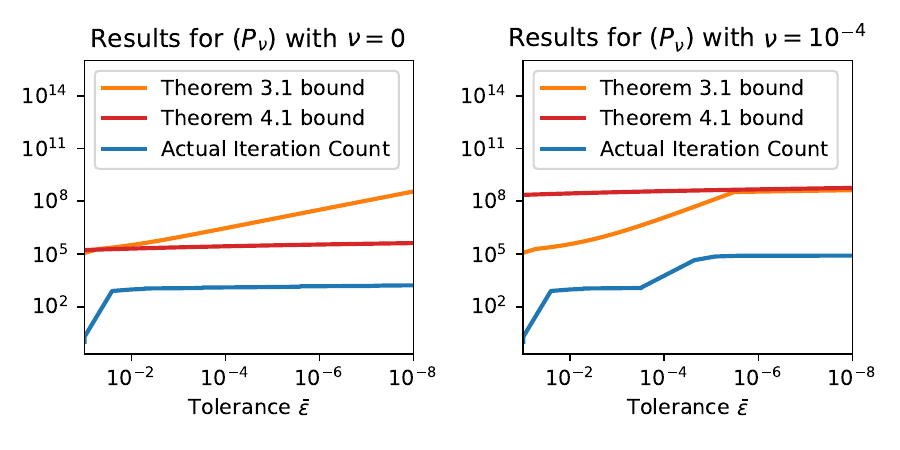}\vspace{-10pt}
	\caption{Plots of the guaranteed bounds on the number of iterations of rPDHG and the actual number of iterations of rPDHG for the LP instance \eqref{pro:example_lp_validate} for $\nu=0$ (left) and $\nu=10^{-4}$ (right). }\label{fig:two_thms_easy_hard_LP_paper}
\end{figure}

In Table \ref{tabletable} we show the computed values of the quantities
that help explain this behavior. The value of $\bar\delta$ is much
larger for $\nu=0$ than for $\nu=10^{-4}$, while the two instances
have the same value of $\kappa$ and nearly identical values of
$\max_{w\in\calW^\star}\|w\|$. However, the value of
$\inf_{0<\delta\leq\bar\delta}D_\delta/r_\delta$ is $115.7538$ for
$\nu=0$ and $158401.2392$ for $\nu=10^{-4}$. This significant difference is also reflected in the bounds and
actual iteration counts in Figure
\ref{fig:two_thms_easy_hard_LP_paper}.
 
\begin{table}[htbp]
	\centering
	\begin{tabular}{l|cccccc}
		& $\kappa$
		& $\bar{\delta}$
		& $D_{\bar{\delta}}$
		& $r_{\bar{\delta}}$
		& {\small$\displaystyle
			\inf_{0<\delta\leq\bar{\delta}}
			\frac{D_\delta}{r_\delta}$}
		& {\small$\displaystyle
			\max_{w\in\calW^\star}\|w\|$}
		\\ \hline
		$\nu=0$
		& 1
		& 1.02
		& 10.5419
		& 0.091071
		& 115.7538
		& 10.2489
		\\
		$\nu=0.0001$
		& 1
		& 0.0001
		& 1.4143
		& 0.0000089285
		& 158401.2392
		& 10.2489
	\end{tabular}\vspace{5pt}
	\caption{Values of some quantities of interest for the family of
	LP instances \eqref{pro:example_lp_validate} for $\nu=0$ and
	$\nu=0.0001$.}
	\label{tabletable}
\end{table}


\subsection{Proof of Theorem \ref{thm overall complexity lp}}\label{subsec:proof_theorem_lp}

We now prove Theorem \ref{thm overall complexity lp}. Linear optimization problems enjoy a ``sharpness'' property that is not guaranteed for the more general conic optimization problem \eqref{pro: general primal clp}.  Similar in spirit to \cite{xiong2023computational} we define the (primal-and-dual) \textit{PD sharpness} $\mu$ as follows:
\begin{equation}\label{eqdef:LPsharpness}
	\mu:= \inf_{w \in \calF \setminus \calW^\star}\frac{\dist(w, V \cap \{w: \gap(w)= 0\})}{\dist(w,\calW^\star)} \ .
\end{equation}

Recalling the definition of the best suboptimal extreme point gap $\bar\delta$ from Definition \ref{def:best_suboptimal_gap}, a key property that characterizes $\mu$ using $\bar\delta$ is the following lemma, which was shown in \cite{xiong2023computational}, albeit using different notation.
\begin{lemma}\label{lm:LPsharpness} {\bf (essentially Theorem 5.2 of \cite{xiong2023computational})}
	Under Assumption~\ref{assump:striclyfeasible}, for any
$\delta\in(0,\bar{\delta}]$ it holds that  
	\begin{equation}\label{eq:LPsharpness_keyslice}
		\mu = \inf_{w \in \calF \cap \{w: \gap(w) = \delta\}}\frac{\dist(w, V \cap \{w: \gap(w)= 0\})}{\dist(w,\calW^\star)} \ .
	\end{equation}
\end{lemma}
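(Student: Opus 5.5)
Throughout I write $\theta(w):=\dist(w,H_0)/\dist(w,\calW^\star)$ with $H_0:=V\cap\{w:\gap(w)=0\}$, and $S_\delta:=\calF\cap\{w:\gap(w)=\delta\}$ for the slice.

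The plan is to first reduce $\theta$ to a ratio of the gap to the distance to $\calW^\star$, and then to use the polyhedral (vertex/ray) structure of $\calF$ together with the defining property of $\bar\delta$. The first step is to rewrite the numerator. Since $c\in\mathcal L$ and $q\in\mathcal L^\bot$, the gradient $g:=(c,q)$ of the linear function $\gap(\cdot)$ lies in the subspace $\mathcal L\times\mathcal L^\bot$ parallel to $V$. Hence for $w\in V$ we have $\dist(w,H_0)=\gap(w)/\|g\|$. (If $g=0$ then $\calF=\calW^\star$ and there is nothing to prove.) So
\[
\theta(w)=\frac{\gap(w)}{\|g\|\,\dist(w,\calW^\star)}\quad\text{for } w\in\calF\setminus\calW^\star .
\]
Since $0<\delta$, every point of $S_\delta$ lies in $\calF\setminus\calW^\star$, so the inequality $\mu\le\inf_{S_\delta}\theta$ is immediate. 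The work is the reverse inequality: for every $w\in\calF\setminus\calW^\star$ I will exhibit a point $m\in S_\delta$ with $\theta(m)\le\theta(w)$, or at least approximate such points.

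For the reverse inequality I will use that $\calF$ is a polyhedron (here $K=\mathbb{R}^{2n}_+$). By Minkowski--Weyl, every $w\in\calF$ can be written as
\[
w=\sum_i\lambda_i v_i+d,\qquad \lambda_i\ge0,\ \sum_i\lambda_i=1,
\]
where the $v_i\in\ep_\calF$ and $d$ is a recession direction of $\calF$. Because $\gap\ge0$ on $\calF$, we have $\gap(d)\ge0$. Directions with $\gap(d)=0$ are recession directions of $\calW^\star$ as well, so they can be absorbed into the optimal part. By Definition~\ref{def:best_suboptimal_gap}, every vertex $v_i$ is either optimal or satisfies $\gap(v_i)\ge\bar\delta\ge\delta$.

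I then group the optimal vertices into a point $u\in\calW^\star$. For each suboptimal vertex $v_j$, with $g_j:=\gap(v_j)$, I form $m_j:=u+(\delta/g_j)(v_j-u)\in S_\delta$; this lies in $\calF$ because $\delta/g_j\le1$. For a ray $d$ with $e:=\gap(d)>0$, I form $m_d:=u+(\delta/e)d\in S_\delta$. This rewrites
\[
w-u=\sum_k c_k\,(m_k-u)\quad\text{with}\quad \gap(w)=\delta\sum_k c_k .
\]
The target inequality is
\[
\dist(w,\calW^\star)\le\sum_k c_k\,\dist(m_k,\calW^\star),
\]
which gives
\[
\theta(w)\ \ge\ \frac{\sum_k c_k\,\delta}{\sum_k c_k\,\dist(m_k,\calW^\star)}\ \ge\ \min_k\theta(m_k)\ \ge\ \inf_{S_\delta}\theta .
\]

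The main obstacle is exactly that target inequality. When $C:=\sum_k c_k\le1$ it follows from convexity of $\dist(\cdot,\calW^\star)$ together with $\dist(u,\calW^\star)=0$. This covers every $w\in\calW_\delta$, and is essentially the statement that $\calW_\delta=\operatorname{conv}(\calW^\star\cup S_\delta)$ plus recession directions of $\calW^\star$. When $\gap(w)>\delta$ we have $C>1$, and the naive witness $u+\sum_k c_k(P_{\calW^\star}(m_k)-u)$ need not lie in $\calW^\star$.

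My first attempt at this case is to choose the base point more cleverly: replace $u$ by a point of $\calW^\star$ adapted to each $m_k$, or argue face by face. Restricted to the minimal face of $\calF$ containing $w$, the optimal set is a face $G$ of that face. The relevant directions $m_k-P_G(m_k)$ then live in a common cone of directions emanating from $G$, and along rays from $G$ the distance to $\calW^\star$ grows linearly. That linear growth is exactly what would give homogeneity in place of mere convexity.

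If that fails, I would instead invoke the cited argument of Theorem 5.2 in \cite{xiong2023computational}, translating its notation (LP sharpness on the primal and dual sides) into $V$, $\calF$, $\calW^\star$, and $\bar\delta$ here.
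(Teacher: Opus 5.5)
Your reduction $\theta(w)=\gap(w)/(\|g\|\,\dist(w,\calW^\star))$ is sound, as is the easy inequality $\mu\le\inf_{S_\delta}\theta$. The vertex/ray argument for $C\le1$ (i.e.\ $\gap(w)\le\delta$) is also sound. That last step is exactly where polyhedrality and $\delta\le\bar\delta$ are needed, since they guarantee $m_j=u+(\delta/g_j)(v_j-u)\in\calF$. However, the proof is not complete. For $w\in\calF$ with $\gap(w)>\delta$ you never establish $\theta(w)\ge\inf_{S_\delta}\theta$. You only sketch possible strategies (a cleverer base point, a face-by-face argument) and then fall back on citing \cite{xiong2023computational}. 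As written this is a genuine gap.

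You have also misidentified which case is hard: the case $\gap(w)>\delta$ needs neither the decomposition nor $\bar\delta$. Let $p:=P_{\calW^\star}(w)$ and $w_t:=p+t(w-p)$ for $t\in[0,1]$.
\begin{itemize}
\item By convexity, $w_t\in\calF$.
\item Since $\gap(p)=0$ and $\gap$ is linear, $\gap(w_t)=t\,\gap(w)$.
\item The projection characterization $(w-p)^\top(y-p)\le0$ for all $y\in\calW^\star$ is invariant under positive scaling of $w-p$. Hence $P_{\calW^\star}(w_t)=p$ and $\dist(w_t,\calW^\star)=t\,\dist(w,\calW^\star)$.
\end{itemize}
So $\theta$ is constant on the segment, and $t_0:=\delta/\gap(w)\in(0,1)$ gives $w_{t_0}\in S_\delta$ with $\theta(w_{t_0})=\theta(w)$. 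This is the homogeneity you were after, obtained by pulling $w$ toward its own projection rather than through the vertex decomposition. It is the same device the paper uses in the second halves of the proofs of Lemmas~\ref{lm efeas bound dist} and~\ref{lm c123 sharp LP}.

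Pushing outward along this ray is what can fail when $\gap(w)<\delta$, because the ray may leave $\calF$ before reaching gap $\delta$. That is why your decomposition argument is needed there and not here. With this patch your argument becomes a complete, self-contained proof, whereas the paper gives no proof and simply defers to Theorem 5.2 of \cite{xiong2023computational}.
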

\noindent The following lower bound on $\mu$ in terms of $\delta$ and $d^H_\delta$ is a direct implication of Lemma \ref{lm:LPsharpness}.
\begin{lemma}\label{lm:lowerbound_mu}
	Under the hypothesis of Lemma \ref{lm:LPsharpness}, for any $\delta\in(0,\bar{\delta}]$ it holds that
	\begin{equation}\label{eq:lowerbound_mu}
		\mu \ge  \frac{\delta}{d^H_\delta}\cdot \frac{1}{\sqrt{\|P_{\mathcal{L}}(c)\|^2 + \|q\|^2}} \ .
	\end{equation}
\end{lemma}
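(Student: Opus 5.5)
Lemma \ref{lm:LPsharpness} reduces the claim to a pointwise lower bound on a ratio over the slice $\calF\cap\{w:\gap(w)=\delta\}$. Fix $\delta\in(0,\bar\delta]$ and any $w$ in this slice. For the denominator, $w\in\calW_\delta$, so the definition of the Hausdorff distance gives $\dist(w,\calW^\star)\le d^H_\delta$. Since $w\notin\calW^\star$ we have $\dist(w,\calW^\star)>0$, so $d^H_\delta>0$ and the ratio is well defined.

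For the numerator, I will compute $\dist(w, V\cap\{w:\gap(w)=0\})$ exactly. With $c\in\mathcal{L}$ and $q\in\mathcal{L}^\bot$ as in Section \ref{sec:get_general_clp_dual}, $\gap(w)=g^\top w$ where $g:=(c,q)\in\mathcal{L}\times\mathcal{L}^\bot$. The direction set of $V$ is $\mathcal{L}\times\mathcal{L}^\bot$, which contains $g$. Hence for $w\in V$, moving along $-g$ stays in $V$ and is exactly orthogonal to the linear subspace $(\mathcal{L}\times\mathcal{L}^\bot)\cap g^\perp$, which is the direction set of $V\cap\{\gap=0\}$. This set is nonempty: $\calW^\star\ne\emptyset$ under Assumption \ref{assump:striclyfeasible}, or alternatively $w-\frac{\gap(w)}{\|g\|^2}g$ lies in it.

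It follows that the projection of $w$ onto $V\cap\{\gap=0\}$ is $w-\frac{\gap(w)}{\|g\|^2}g$, and therefore
\[
\dist(w,V\cap\{\gap=0\})=\frac{\gap(w)}{\|g\|}=\frac{\delta}{\sqrt{\|c\|^2+\|q\|^2}}.
\]
To justify the projection formula, note that the point lies in the set, and the displacement $\frac{\gap(w)}{\|g\|^2}g$ is orthogonal to the set's direction space. Alternatively, the distance is at least $|g^\top(w-u)|/\|g\|=\delta/\|g\|$ for any $u$ in the set, and this bound is attained. Here $c=P_{\mathcal{L}}(c)$ under the paper's convention, which matches the stated denominator. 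If $c$ has not been projected, the same argument applies with $g=(P_{\mathcal{L}}(c),q)$, because $\gap$ restricted to $V$ is a constant shift of $P_{\mathcal{L}}(c)^\top x+q^\top s$.

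Combining the two bounds, every $w$ in the slice satisfies
\[
\frac{\dist(w,V\cap\{\gap=0\})}{\dist(w,\calW^\star)}\ge\frac{\delta}{d^H_\delta}\cdot\frac{1}{\sqrt{\|P_{\mathcal{L}}(c)\|^2+\|q\|^2}}.
\]
Taking the infimum over the slice and invoking \eqref{eq:LPsharpness_keyslice} yields \eqref{eq:lowerbound_mu}.

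The only delicate point is the exact distance computation: one must verify that $g$ lies in the direction space of $V$, so that the gap can be reduced while remaining in $V$. The fact that $g\in\mathcal{L}\times\mathcal{L}^\bot$ settles this. If the slice were empty the infimum would be $+\infty$ and the bound trivial, but under Slater the slice is nonempty for small $\delta$ in any case.
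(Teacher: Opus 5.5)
Your proposal is correct and follows the paper's proof. The paper likewise computes the numerator on the slice in closed form as $\delta/\sqrt{\|P_{\mathcal{L}}(c)\|^2+\|q\|^2}$, bounds the denominator by $d^H_\delta$, and takes the infimum via Lemma~\ref{lm:LPsharpness}. Your projection argument using $g=(c,q)\in\mathcal{L}\times\mathcal{L}^\bot$ just supplies the justification for that closed-form distance, which the paper states without proof.
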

\proof{Proof.}
	For $ w \in V$ satisfying $\gap(w) = \delta$, the numerator of \eqref{eq:LPsharpness_keyslice} has the closed form 		$$
		\dist(w, V \cap \{w: \gap(w)= 0\}) =  \frac{\delta}{\|P_{\mathcal{L}\times\mathcal{L}^{\bot}}([c, q])\|} = \frac{\delta}{\sqrt{\|P_{\mathcal{L}}(c)\|^2 + \|q\|^2}} \ .
	$$
	Also, from the definition of $d^H_\delta$ in Definition \ref{def distance to optima} we have:
	$$
		d^H_\delta = \max_{w \in \calF \cap\{w:\gap(w)\le \delta\}} \dist(w,\calW^\star) \ge \max_{w \in \calF \cap\{w:\gap(w) = \delta\}} \dist(w,\calW^\star) \ ,
	$$
	whereby from
	\eqref{eq:LPsharpness_keyslice}  we have for any $\delta \in (0, \bar{\delta}]$ that
	$$
		\mu =  \frac{\delta}{\sqrt{\|P_{\mathcal{L}}(c)\|^2 + \|q\|^2}} \cdot \inf_{w \in \calF \cap \{w: \gap(w) = \delta\}}\frac{1}{\dist(w,\calW^\star)}  \ge
		\frac{\delta}{\sqrt{\|P_{\mathcal{L}}(c)\|^2 + \|q\|^2}} \cdot \frac{1}{d^H_\delta} \ ,
	$$
	which is exactly \eqref{eq:lowerbound_mu}.
\Halmos\endproof

\begin{lemma}\label{lm c123 sharp LP}
	Suppose that $c\in \mathcal{L}$. For any $\delta \in ( 0,\bar{\delta}]$ and any $w:=(x,s)$ it holds that
	\begin{equation}\label{eq lm c123 sharp LP}
		\dist(w,\calW^\star) \le 	\frac{ d^H_\delta   }{\delta} \cdot\gap(w) +  \frac{5D_\delta}{r_\delta} \cdot \max\{ \dist(w,K) , \dist(w, V)\} \ .
	\end{equation}
\end{lemma}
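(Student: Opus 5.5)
The plan is to combine PD sharpness (Lemma \ref{lm:lowerbound_mu}) with the error bound of Lemma \ref{lm error bound R r}. Sharpness handles the gap part without the additive $d^H_\delta$ term of Lemma \ref{lm efeas bound dist}; the error bound handles infeasibility. First I convert sharpness into a usable inequality on $\calF$. For any $v\in\calF\setminus\calW^\star$, the definition \eqref{eqdef:LPsharpness} gives $\dist(v,\calW^\star)\le \dist(v,V\cap\{\gap=0\})/\mu$. As in the proof of Lemma \ref{lm:lowerbound_mu}, the numerator equals $\gap(v)/\sqrt{\|P_{\mathcal L}(c)\|^2+\|q\|^2}$. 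Lemma \ref{lm:lowerbound_mu} then yields, for every $\delta\in(0,\bar\delta]$,
$$\dist(v,\calW^\star)\le \frac{d^H_\delta}{\delta}\,\gap(v)\qquad\text{for all } v\in\calF ,$$
which holds trivially for $v\in\calW^\star$.

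Next, given an arbitrary $w$, let $\hat w:=P_V(w)$. Since $c\in\mathcal L$ and $q\in\mathcal L^\bot$, we have $\gap(\hat w)=\gap(w)$ and $\|w-\hat w\|=\dist(w,V)$. If $\hat w\in K$, set $v:=\hat w$ and $\lambda:=0$. Otherwise set $v:=\calF(\hat w;w_\delta)=\lambda w_\delta+(1-\lambda)\hat w$ with $\lambda\in(0,1)$. The point $v$ lies in $V\cap K=\calF$ regardless of the size of $\gap(\hat w)$, so no case split on $\gap(w)$ versus $\delta$ is needed. The proof of Lemma \ref{lm error bound R r} gives the two facts I need. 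The first is $\lambda/(1-\lambda)\le \dist(\hat w,K)/r_\delta$, hence $\lambda\le \dist(\hat w,K)/r_\delta$. The second is $\|\hat w-v\|\le (D_\delta/r_\delta)\dist(\hat w,K)$. Also $\dist(\hat w,K)\le \dist(w,V)+\dist(w,K)\le 2\max\{\dist(w,K),\dist(w,V)\}$.

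Because $\gap$ is linear, $\gap(v)=\lambda\gap(w_\delta)+(1-\lambda)\gap(\hat w)\le \lambda\delta+(1-\lambda)\gap(w)$. Combining,
$$\dist(w,\calW^\star)\le \|w-\hat w\|+\|\hat w-v\|+\frac{d^H_\delta}{\delta}\big(\lambda\delta+(1-\lambda)\gap(w)\big).$$
The term $d^H_\delta\lambda$ is at most $(D_\delta/r_\delta)\dist(\hat w,K)$, using $d^H_\delta\le D_\delta$ from Lemma \ref{lm D ge r}. Collecting terms, and using $D_\delta/r_\delta\ge1$ to absorb $\|w-\hat w\|$, gives $1+2+2=5$ copies of $(D_\delta/r_\delta)\max\{\dist(w,K),\dist(w,V)\}$, plus $\frac{d^H_\delta}{\delta}(1-\lambda)\gap(w)$.

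The main obstacle is the sign of $\gap(w)$. When $\gap(w)\ge0$, we have $(1-\lambda)\gap(w)\le\gap(w)$ and we are done. When $\gap(w)<0$, the factor $(1-\lambda)$ goes the wrong way. I would first try to show that the leftover $\lambda|\gap(w)|\,d^H_\delta/\delta$ can be charged to the constraint term, using $\lambda\le\dist(\hat w,K)/r_\delta$ and $\gap(\hat w)\ge -\|(c,q)\|\dist(\hat w,\calF)$. If that fails, I would fall back on the observation that the downstream application only uses $\gap(w)$ through $\egap(w)=\max\{0,\gap(w)\}$. That would make the version with $\gap(w)$ replaced by its positive part sufficient, and that version follows from the argument above verbatim.
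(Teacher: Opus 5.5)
\textbf{The claim that no case split is needed is false, and the case $\gap(w)>\delta$ is not proved.} For $0\le\gap(w)\le\delta$ your argument is essentially the paper's. The second fact you take from Lemma~\ref{lm error bound R r}, namely $\|\hat w-v\|\le (D_\delta/r_\delta)\dist(\hat w,K)$, comes from $\|\hat w-v\|=\frac{\lambda}{1-\lambda}\|w_\delta-v\|$ together with $\|w_\delta-v\|\le D_\delta$. The latter needs $v\in\calW_\delta$, i.e.\ $\gap(v)\le\delta$. That is guaranteed only when $\gap(\hat w)\le\delta$, which is exactly the hypothesis of Lemma~\ref{lm error bound R r}, since then the whole segment $[\hat w,w_\delta]$ has gap at most $\delta$. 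If $\gap(\hat w)>\delta$, let $\hat w$ be a slight outward perturbation, within $V$, of a boundary point of $\calF$ that has large gap. Then $v$ lies near that point, far from $w_\delta$, and $\|\hat w-v\|$ can exceed $(D_\delta/r_\delta)\dist(\hat w,K)$ by an arbitrary factor.

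The missing idea is the homogeneity reduction the paper uses. Set $w^\star:=P_{\calW^\star}(w)$ and $w_t:=w^\star+t(w-w^\star)$. Then $\dist(w_t,\calW^\star)=t\,\dist(w,\calW^\star)$ is linear in $t$. The right-hand side of \eqref{eq lm c123 sharp LP} at $w_t$ is convex in $t$ and vanishes at $t=0$, because $\gap(w_t)=t\,\gap(w)$ and $w^\star\in\calF$. Apply the small-gap case at $u:=\delta/\gap(w)\le 1$, then use Proposition~\ref{lm:convexity_ineq} to pass to $t=1$.

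\textbf{Your handling of $\gap(w)<0$ is also not closed.} The fallback of replacing $\gap(w)$ by $\max\{0,\gap(w)\}$ proves a strictly weaker statement than the lemma. The first idea, using $\gap(\hat w)\ge-\|(c,q)\|\dist(\hat w,\calF)$, yields either a term quadratic in $\dist(\hat w,K)$ or a coefficient $\frac{\|(c,q)\|d^H_\delta}{\delta}\cdot\frac{D_\delta}{r_\delta}$. By Lemma~\ref{lm:lowerbound_mu}, $\frac{\|(c,q)\|d^H_\delta}{\delta}\ge 1/\mu$, which can be arbitrarily large, so this cannot give the constant $5$.

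The loss comes from replacing $\gap(w_\delta)$ by $\delta$ before using $\gap(v)\ge 0$. Instead, $v=\lambda w_\delta+(1-\lambda)\hat w$ gives the identity
$\gap(v)-\gap(\hat w)=\frac{\lambda}{1-\lambda}\bigl(\gap(w_\delta)-\gap(v)\bigr)\le\frac{\lambda}{1-\lambda}\,\delta\le\frac{\delta\,\dist(\hat w,K)}{r_\delta}$,
using $\gap(v)\ge0$ (as $v\in\calF$) and $\gap(w_\delta)\le\delta$. This holds for either sign of $\gap(\hat w)$ and is exactly the content of the paper's \eqref{eq lm c123 sharp LP 2}. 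It gives $\frac{d^H_\delta}{\delta}\gap(v)\le\frac{d^H_\delta}{\delta}\gap(w)+\frac{D_\delta}{r_\delta}\dist(\hat w,K)$, and your count $1+2+2=5$ then goes through for every $\gap(w)\le\delta$, negative values included.
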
 
\proof{Proof.}
	We first consider the case when $\gap(w)\le \delta$. Let $\bar{w} := P_V(w)$. Then because $c \in \mathcal{L}$ and $q \in \mathcal{L}^{\bot}$, we have $\gap(w) = \gap(\bar{w})$ and hence $\gap(\bar{w}) = \gap(w)\le \delta$.
	Let $(w_\delta, r_\delta)$ be the conic center and conic radius of $\calW_\delta$ as defined in Definition \ref{def radius}. 
	Suppose first that $\bar{w}\notin K$. Because $\bar{w} \in V$, Lemma \ref{lm error bound R r} yields
	\begin{equation}\label{eq lm c123 sharp LP 1}
		\frac{\| \bar{w} - \calF(\bar{w};w_\delta) \|}{\dist(\bar{w}, K)} \le \frac{\| w_\delta - \calF(\bar{w};w_\delta) \|}{r_\delta} \ .
	\end{equation}

	Also, because $\bar{w}$, $\calF(\bar{w};w_\delta)$, and $w_\delta$ are collinear, it follows that
	\begin{equation}\label{eq lm c123 sharp LP 2}
		\begin{aligned}
			\big| \gap(\bar{w}) - \gap(\calF(\bar{w};w_\delta)) \big| & = \big|
			\gap(\calF(\bar{w};w_\delta)) - \gap(w_\delta)
			\big|
			\cdot \frac{\| \bar{w} - \calF(\bar{w};w_\delta)\|}{\|\calF(\bar{w};w_\delta) - w_\delta\|} \\
			                                                          & \le  \big|
			\gap(\calF(\bar{w};w_\delta)) - \gap(w_\delta)
			\big|
			\cdot \frac{\dist(\bar{w},K)}{r_\delta} \le \delta \cdot  \frac{\dist(\bar{w},K)}{r_\delta} \ ,
		\end{aligned}
	\end{equation}
	where the first inequality uses \eqref{eq lm c123 sharp LP 1} and the second inequality follows since $w_\delta$ and $\calF(\bar{w};w_\delta)$ are in $\calW_\delta$. 
	If $\bar{w}\in K$, then $\bar w\in\calW_\delta$ and we define $\calF(\bar w;w_\delta):=\bar w$. The inequality \eqref{eq lm c123 sharp LP 2} then also holds immediately.

	Next, because $\calF(\bar{w};w_\delta) \in \calW_\delta$, then from the definition of the PD sharpness $\mu$ in \eqref{eqdef:LPsharpness} and the lower bound on $\mu$ from Lemma \ref{lm:lowerbound_mu}, we have:
	\begin{equation}\label{eq lm c123 sharp LP 3}
		\begin{aligned}
			\dist(\calF(\bar{w};w_\delta),\calW^\star) & \ \le   \frac{\dist(\calF(\bar{w};w_\delta), V \cap \{w:\gap(w)  = 0\}         ) }{ \mu} \\
			                                           & \ =
			\frac{\gap(\calF(\bar{w};w_\delta))}{\sqrt{\|c\|^2 + \|q\|^2}\cdot \mu} \le \frac{ d^H_\delta   }{\delta} \cdot \gap(\calF(\bar{w};w_\delta))\ .
		\end{aligned}
	\end{equation}

	\noindent We also have the following bound on $\dist(\bar{w},\calW^\star)$:
	\begin{equation}\label{eq lm c123 sharp LP 4}
		\begin{aligned}
			\dist(\bar{w},\calW^\star) & \le \dist(\calF(\bar{w};w_\delta),\calW^\star)  + \| \calF(\bar{w};w_\delta) - \bar{w}\| \\  & \le  \frac{ d^H_\delta   }{\delta} \cdot \gap(\calF(\bar{w};w_\delta))  +\frac{D_\delta}{r_\delta} \cdot \dist(\bar{w},K) \ ,
		\end{aligned}
	\end{equation}
	where the second inequality uses \eqref{eq lm c123 sharp LP 3} as well as the inequality $\|   \calF(\bar{w};w_\delta) - \bar{w}\| \le \frac{D_\delta}{r_\delta} \cdot \dist(\bar{w},K)$, which is immediate when $\dist(\bar w,K)=0$ and otherwise follows from Lemma \ref{lm error bound R r}.

	From \eqref{eq lm c123 sharp LP 2} we have
	\begin{equation}\label{eq lm c123 sharp LP 6}
		\begin{aligned}
			\gap(\calF(\bar{w};w_\delta)) & \ \le  \gap(\bar{w}) + \delta\cdot \frac{\dist(\bar{w},K)}{r_\delta} \ ,
		\end{aligned}
	\end{equation}
	whereby \eqref{eq lm c123 sharp LP 4} implies
	\begin{equation}\label{eq lm c123 sharp LP 7}
		\begin{aligned}
			\dist(\bar{w},\calW^\star) & \le  \frac{ d^H_\delta   }{\delta} \cdot \left( \gap(\bar{w}) + \delta\cdot \frac{\dist(\bar{w},K)}{r_\delta}    \right)  +\frac{D_\delta}{r_\delta} \cdot \dist(\bar{w},K) \\
			                           & \le \frac{ d^H_\delta   }{\delta} \cdot\gap(\bar{w}) +  \frac{2D_\delta}{r_\delta} \cdot \dist(\bar{w},K) \ ,
		\end{aligned}
	\end{equation}
	in which the second inequality uses $d^H_\delta \le D_\delta$ from Lemma \ref{lm D ge r}.

	Finally, we use the upper bound on $\dist(\bar{w},\calW^\star)$ to obtain an upper bound on $\dist(w,\calW^\star)$:
	\begin{equation}\label{eq lm c123 sharp LP 8}\begin{aligned}
			\dist(w,\calW^\star) & \le 	\dist(\bar{w},\calW^\star) + \| \bar{w} - w\|  = 	\dist(\bar{w},\calW^\star) +\dist(w, V) \\ & \le \frac{ d^H_\delta   }{\delta} \cdot\gap(w) + \frac{2D_\delta}{r_\delta} \cdot \dist(\bar w, K) + \dist(w, V)
			\\ & \le \frac{ d^H_\delta   }{\delta} \cdot\gap(w) + \frac{2D_\delta}{r_\delta} \cdot \left(\dist(w, K) + \dist(w,V) \right) + \dist(w, V)
			\\ & \le 	\frac{ d^H_\delta   }{\delta} \cdot\gap(w) +  \frac{5D_\delta}{r_\delta} \cdot \max\{ \dist(w,K) , \dist(w, V)\} \ ,
		\end{aligned}\end{equation}
	where the second inequality uses \eqref{eq lm c123 sharp LP 7} and $\gap(\bar{w}) = \gap(w)$, the third inequality uses $\dist(\bar{w},K) \le \dist(w,K) + \|w - \bar{w}\| = \dist(w,K)  + \dist(w,V)$, and the fourth inequality uses $D_\delta \ge r_\delta$ from Lemma \ref{lm D ge r}. This proves \eqref{eq lm c123 sharp LP} in the case when $\gap(w) \le \delta$.

	Let us now consider the case where  $\delta \le \gap(w)$.  Here we will make use of Proposition \ref{lm:convexity_ineq} to complete the proof.  Let $w^\star := P_{\calW^\star}(w) = \arg\min_{\bar{w}\in\calW^\star}\|\bar{w} - w\|$ and define $w_t := w^\star + t\cdot (w - w^\star)$ for $t\in[0,\infty)$. Then define the following functions of $t$ :
	$$
		f(t):= \dist(w_t,\calW^\star) \ , \text{ and }g(t):=\frac{ d^H_\delta   }{\delta} \cdot\gap(w_t) +  \frac{5D_\delta}{r_\delta} \cdot \max\{ \dist(w_t,K) , \dist(w_t, V)\}  \ .
	$$
	Then $f(t)$ is a nonnegative  linear  function on $[0,\infty)$, and $f(0) = 0$. And $g(t)$ is convex and nonnegative on $[0,\infty)$, and $g(0) = 0$. In addition, because $\gap(\cdot)$ is a linear function and $\gap(w_t) = t \cdot \gap(w)$, then setting $u := \delta / \gap(w)$ we obtain $ \gap(w_u) = u\cdot \gap(w) = \delta$. We can then invoke \eqref{eq lm c123 sharp LP 8} using $w_u$ in the place of $w$, which yields $g(u) \ge f(u)$. Now it follows from Proposition \ref{lm:convexity_ineq} with $v := 1\ge  u$ that  $g(1) \ge f(1)$, which is precisely \eqref{eq lm c123 sharp LP 8} in the case $\delta \le \gap(w)$, and completes the proof.
\Halmos\endproof

\begin{lemma}\label{thm L sharp LP} Suppose that $c\in\mathcal{L}$. Under Assumption \ref{assump:striclyfeasible}, suppose that Algorithm \ref{alg: PDHG with restarts} (rPDHG) is run starting from $z^{0,0} = (x^{0,0},y^{0,0} ) = (0,0)$, and the step-sizes $\sigma$ and $\tau$ satisfy the step-size inequality \eqref{eq:general_stepsize}. Then for every $n\ge 1$ and any $\delta  \in (0,\bar{\delta}]$ it holds that
	\begin{equation}\label{eq thm L sharp LP}
		\dist_M(z^{n,0},\calZ^\star) \le    (5\sqrt{2} + 4) \cdot c_0^2 \cdot \frac{ D_\delta}{r_\delta}  \cdot \rho(\| z^{n,0} - z^{n-1,0}\|_M; z^{n,0}) \ .
	\end{equation}
\end{lemma}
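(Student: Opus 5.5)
The plan mirrors the proof of Lemma \ref{thm L C}, with Lemma \ref{lm c123 sharp LP} used in place of Lemma \ref{lm efeas bound dist}. The point is that, for $\delta\in(0,\bar\delta]$, Lemma \ref{lm c123 sharp LP} bounds $\dist(w,\calW^\star)$ by $\frac{d^H_\delta}{\delta}\gap(w)$ rather than by $\frac{d^H_\delta}{\delta}\max\{\gap(w),\delta\}$. This removes the additive $d^H_\delta$ term, so no case split on $\gap(w^{n,0})$ versus $\delta$ is needed.

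\textbf{Step 1: change of norm.} First apply Lemma \ref{lm change of norm} with $(\calX,\calS,\calY):=(\calX^\star,\calS^\star,\calY^\star)$ to get
\[
\dist_M(z^{n,0},\calZ^\star)\le \sqrt{2}c_0\,\dist(w^{n,0},\calW^\star).
\]

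\textbf{Step 2: distance to $\calW^\star$.} Invoke Lemma \ref{lm c123 sharp LP} at $w=w^{n,0}$, which gives
\[
\dist(w^{n,0},\calW^\star)\le \tfrac{d^H_\delta}{\delta}\gap(w^{n,0})+\tfrac{5D_\delta}{r_\delta}\econs(w^{n,0}).
\]
If $\gap(w^{n,0})<0$, replace it by $\egap(w^{n,0})$, since the right-hand side only increases. (One could check that Lemma \ref{lm c123 sharp LP} is applicable with a negative gap. Even without that check, the replacement is harmless because the bound is monotone in the gap term.)

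\textbf{Step 3: bounds from the normalized gap.} Abbreviate $\rho_n:=\rho(\|z^{n,0}-z^{n-1,0}\|_M;z^{n,0})$. Lemma \ref{lm use gap to bound error in w} gives $\econs(w^{n,0})\le c_0\rho_n$ and $\egap(w^{n,0})\le 2\sqrt{2}c_0\,\dist(0,\calW^\star)\,\rho_n$. Substituting these yields
\[
\dist_M(z^{n,0},\calZ^\star)\le \Big(5\sqrt{2}\,c_0^2\tfrac{D_\delta}{r_\delta}+4c_0^2\tfrac{d^H_\delta}{\delta}\dist(0,\calW^\star)\Big)\rho_n .
\]

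\textbf{Step 4: absorb the second term.} Lemma \ref{lm r_delta over delta} gives $\dist(0,\calW^\star)\le\max_{w\in\calW^\star}\|w\|\le \delta/r_\delta$. Together with $d^H_\delta\le D_\delta$ from Lemma \ref{lm D ge r}, this bounds the second term by $4c_0^2 D_\delta/r_\delta$. The total coefficient is then $(5\sqrt2+4)c_0^2 D_\delta/r_\delta$, which is \eqref{eq thm L sharp LP}.

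The only delicate point is Step 2's handling of a possibly negative gap, which the $\egap$ replacement resolves. Everything else is bookkeeping of constants identical to Lemma \ref{thm L C}.
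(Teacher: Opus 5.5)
Your proposal is correct and matches the paper's proof step for step, with the same constants. The chain is Lemma \ref{lm change of norm}, then Lemma \ref{lm c123 sharp LP}, then Lemma \ref{lm use gap to bound error in w}, and finally Lemmas \ref{lm r_delta over delta} and \ref{lm D ge r} to absorb the $\frac{d^H_\delta}{\delta}\dist(0,\calW^\star)$ term. Your replacement of $\gap(w^{n,0})$ by $\egap(w^{n,0})$ only makes explicit a step the paper uses implicitly when it invokes the $\egap$ bound of Lemma \ref{lm use gap to bound error in w}.
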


\proof{Proof of Lemma \ref{thm L sharp LP}.}
	Directly using Lemma \ref{lm change of norm} and Lemma \ref{lm c123 sharp LP} we have
	\begin{equation}\label{eq L sharp LP 1}
		\begin{aligned}
			\dist_M(z^{n,0},\calZ^\star) & \le  \sqrt{2}c_0 \cdot	\dist(w^{n,0},\calW^\star)                                                                                                                           \\
			                             & \le  \	\frac{ \sqrt{2}c_0 d^H_\delta   }{\delta} \cdot\gap(w^{n,0}) +  \frac{5\sqrt{2}c_0  D_\delta}{r_\delta} \cdot \max\{ \dist(w^{n,0},K) , \dist(w^{n,0}, V)\} \ ,
		\end{aligned}
	\end{equation}
	and applying Lemma \ref{lm use gap to bound error in w} yields
	\begin{equation}\label{eq L sharp LP 2}			\begin{aligned}
			\dist_M(z^{n,0},\calZ^\star) \le \  & \left(  \frac{5\sqrt{2}c_0^2 D_\delta}{r_\delta}  + \frac{4 c_0^2 d^H_\delta   }{\delta} \cdot \dist(0,\calW^\star)    \right) \cdot \rho(\| z^{n,0} - z^{n-1,0}\|_M; z^{n,0}) \ .
		\end{aligned}
	\end{equation}
	From Lemma \ref{lm r_delta over delta} we have
	$r_\delta \cdot \dist(0,\calW^\star) \le r_\delta \cdot \max_{w\in\calW^\star}\|w\| \le \delta$, whereby it follows from \eqref{eq L sharp LP 2} that
	\begin{equation}\label{eq L sharp LP 3}
		\dist_M(z^{n,0},\calZ^\star)
		\le \left( \frac{5\sqrt{2} c_0^2 D_\delta}{r_\delta} +  \frac{4c_0^2 d^H_\delta}{r_\delta}
		\right)
		\cdot  \rho(\| z^{n,0} - z^{n-1,0}\|_M ; z^{n,0} ) \ .
	\end{equation}
	Last of all, notice that $d^H_\delta \le D_\delta$ (from Lemma \ref{lm D ge r}), and so \eqref{eq thm L sharp LP} follows from \eqref{eq L sharp LP 3}.
\Halmos\endproof

We now prove Theorem \ref{thm overall complexity lp}. 

\proof{Proof of Theorem \ref{thm overall complexity lp}.}
	From Lemma \ref{lm use gap to bound error in w} it follows that $\econs(w^{n,0})\le\eps_{\mathrm{cons}}$ and $\egap(w^{n,0})\le\eps_{\mathrm{gap}}$ if
	\begin{equation}\label{eq overall complexity lp 1}
		\rho(\| z^{n,0} - z^{n-1,0}\|_M ; z^{n,0} ) \le \frac{1}{c_0}\left(
		\eps_{\mathrm{cons}}\wedge
		\frac{\sqrt{2}\eps_{\mathrm{gap}}}{4 \cdot \dist(0,\calW^\star)}
		\right) \ .
	\end{equation}
	It follows from the choice of step-sizes in the theorem and the definition of $c_0$ in \eqref{icemelt} that $c_0 = \sqrt{\kappa}$.

	In the proof of Lemma \ref{thm L sharp LP} we see that \eqref{eq thm L sharp LP} holds for any $\delta \in (0,\bar\delta]$, so Theorem \ref{thm: complexity of PDHG with adaptive restart} can be applied since the condition \eqref{eq restart L C condition} is satisfied using \eqref{eq thm L sharp LP} with $\condG = (5\sqrt{2} + 4) \cdot c_0^2 \cdot \frac{ D_\delta}{r_\delta}$ and $\condC = 0$. Therefore it follows from Theorem \ref{thm: complexity of PDHG with adaptive restart} that $T$ satisfies
	\begin{equation}\label{eq overall complexity lp 2}
		T \le 12 \cdot \left((5\sqrt{2}+4)\cdot c_0^2\cdot \frac{D_\delta}{r_\delta} \right) \cdot  \ln\left(\frac{12 c_0\cdot  \dist_M(z^{0,0},\calZ^\star)}{\min\left\{\eps_{\mathrm{cons}},\frac{\sqrt{2}\eps_{\mathrm{gap}}}{4 \cdot \dist(0,\calW^\star)}\right\}}\right) \ .
	\end{equation}		Here we have $c_0^2 = \kappa$, and $\dist_M(z^{0,0},\calZ^\star) \le  \sqrt{2}c_0 \cdot \dist(w^{0,0},\calW^\star)\le  \sqrt{2}c_0 \cdot \dist(0,\calW^\star) = \sqrt{2\kappa} \cdot \dist(0,\calW^\star)$ from Lemma \ref{lm change of norm} and Proposition \ref{lm:distance_to_optimal_initial}.
	Additionally, $12 \cdot (5\sqrt{2}+4)  \le 133$ and $12\sqrt{2}\le 17$.     Therefore \eqref{eq overall complexity lp 2} yields \eqref{eq overall complexity LP}.
\Halmos\endproof

\section{Summary and research directions}\label{sec:discussion}

In this paper we have studied how the primal-dual level-set geometry informs the convergence of rPDHG for conic linear optimization. We have introduced three geometric condition measures of the primal-dual level sets $\calW_\delta$: the diameter $D_\delta$, the conic radius $r_\delta$, and the Hausdorff distance $d_\delta^H$ to the optimal solution set. We have shown that rPDHG can converge at a fast rate (at least in theory) if there exists a level set $\calW_\delta$ that is close to the optimal solution set and if the ratio $D_\delta/r_\delta$ is small. For LP instances, we have obtained a global linear convergence guarantee governed by the best-conditioned level set $\calW_\delta$ among those for which $\delta$ does not exceed the best non-optimal extreme-point gap. 

After the unpublished manuscript \cite{xiong2024role} was posted, there have appeared several follow-up works that have developed related theoretical results and computational enhancements \cite{xiong2024accessible,xiong2025high,xiong2025new,lin2025pdcs}.  
In particular, \cite{xiong2024accessible}  reports experimental evidence that the iteration bound in Theorem \ref{thm overall complexity lp} is consistent with computational practice for LP instances with unique optima. 

We end this paper with the following short list of open questions for further investigation:

\textbf{1. Extensions to other primal-dual algorithms.} We expect that the use of the level-set geometric condition measures for informing convergence is not limited to just rPDHG. We note that the only properties of rPDHG that were explicitly exploited in our analysis were those in Lemmas \ref{lm: nonexpansive property}, \ref{lm: R in the opt gap convnergence} and \ref{lm: original sublinear PDHG}. And in fact properties similar to these also hold for other primal-dual first-order algorithms such as ADMM and EGM \cite{applegate2023faster,ryu2022large}.  For this reason we expect that much of our analysis can be extended to these other primal-dual first-order methods as well.

\textbf{2. Condition measures and analysis for more general nonlinear programs.} In this paper we studied conic linear optimization problems (CLPs), which are a (rather important) subclass of constrained convex optimization. It would be interesting to explore how the level-set geometry influences the convergence rate of first-order methods for more general convex and nonconvex constrained optimization.

\textbf{3. Improving the level set geometry to speed up convergence of rPDHG.} Theorem \ref{thm overall complexity clp} begs the question of whether there exists a linear transformation of the variables (or equivalently a change in the inner product norm) that improves the geometry of the level sets $\calW_\delta$ and hence makes the instance better conditioned for rPDHG.  Some preliminary work on this was undertaken in the unpublished manuscript \cite{xiong2024role} which showed how knowledge of points on the central path can be used to create such a linear transformation, along with heuristics for implementing this strategy efficiently.  We anticipate that further efforts to develop effective pre-conditioning heuristics could lead to substantial improvements in practice (and in theory).

\textbf{4. Condition measures and analysis for the infeasibility detection problem.} It was shown in \cite{applegate2024infeasibility} that rPDHG can be used to detect infeasibility for LP instances. It would be interesting to study how the geometry of the problem informs the speed of infeasibility detection and how to possibly improve the geometry to enhance practical algorithm performance.

\begin{APPENDICES}
	\SingleSpacedXI

\section{Proof of Lemma \ref{lm: convergence of PHDG without restart}}\label{app:proof_lm_convergence_pdhg_without_restart}

\proof{Proof of Lemma \ref{lm: convergence of PHDG without restart}.}
	The following proof is a variation of the proof of  Lemma 2.1 in \cite{xiong2023computational}. Define $\bar{s}=c - A^\top \bar{y} $ and $\bar{w} = (\bar{x},\bar{s})$. From the definition of $\rho(r;\cdot)$ we have:
	\begin{equation}\label{eq  lm: convergence of PHDG without restart 1}
		L(\bar{x},y) - L(x,\bar{y}) \le r \rho(r;\bar{z}) \ \ \text{for any $z=(x,y) \in B(r;\bar{z})$ . }
	\end{equation}

	We first prove item (\textit{\ref{item_gap1}}.), which is the distance to the affine subspace $V$.
	Let $u:=b-A\bar{x}$. If $u=0$, then $\bar x\in V_p$ and the bound in item (\textit{\ref{item_gap1}}.) holds immediately. Otherwise, define $y:= \bar{y} + \sqrt{\sigma}r \cdot u/ \|u\|$. Set $z := (\bar x, y)$, whereby $ z \in B(r;\bar{z})
	$ and hence from \eqref{eq  lm: convergence of PHDG without restart 1} we have
	$$ r \rho(r;\bar{z}) \ge L(\bar{x},y) - L(\bar x,\bar{y}) = (b-A \bar x)^\top (y - \bar y)  = \sqrt{\sigma}r \|u\| \ , $$ which means $\|u\|=\|A\bar{x} - b\| \le \frac{\rho(r;\bar{z})}{\sqrt{\sigma}}$. Let $\hat{x} \in \arg\min_{x\in V_p}\|x - \bar{x}\|$ and hence $\dist(\bar{x},V_p) = \|\hat{x} - \bar{x}\|$.  Note from the standard optimality conditions that $\hat{x} - \bar{x} \in \operatorname{Im}(A^\top)$. Since
	$$
		\|A\bar{x} - b\| = \|A(\bar{x}-\hat{x})\| \ge \lambda_{\min} \|\bar{x} - \hat{x}\| \ ,
	$$
	then it follows that $\dist(\bar{x},V_p) = \|\hat{x} - \bar{x}\| \le  \frac{\rho(r;\bar{z})}{\sqrt{\sigma} \lambda_{\min}}$. This proves item (\textit{\ref{item_gap1}}.).

	Let us now prove item (\textit{\ref{item_gap2}}.). It holds trivially from the supposition that $\bar x \in K_p$ that $\dist(\bar{x},K_p)  = 0$, and hence we only need to prove  $ \dist(\bar{s},K_d) \le \frac{1}{\sqrt{\tau}} \cdot \rho(r;\bar{z})$. Let us denote $\hat s := P_{K_d}(\bar{s})$ and $d:= \bar s - \hat s$.  Then it follows from the optimality conditions of the projection problem $\hat s =\arg\min_{s \in K_d} \|s - \bar s\|$ that $\hat s \in K_d$, $-d \in K_d^* = K_p$, and $ d ^\top \hat s = 0$.  If $d = 0$ then $\bar s \in K_d$ and the bound in item (\textit{\ref{item_gap2}}.) holds trivially.  If $d \ne0$ then define $x:= \bar{x} - \sqrt{\tau} r  \cdot d / \|d\|$ and set $z := (x, \bar y)$, whereby $ z \in B(r;\bar{z})
	$ and hence from \eqref{eq  lm: convergence of PHDG without restart 1} we have
	\begin{equation}\label{ineq item 3}
		\begin{aligned}
			r \rho(r;\bar{z}) & \ge L(\bar x, \bar y) - L(x,\bar{y})  =  (c - A^\top \bar{y})^\top (\bar{x} - x) = \bar{s}^\top  d \cdot \sqrt{\tau} r / \|d\|  = ( \hat s  + d)^\top d \cdot \sqrt{\tau} r / \|d\| \\
			                  & = d^\top  d\cdot \sqrt{\tau} r / \|d\|  = \sqrt{\tau} r  \|d\| =  \sqrt{\tau} r \cdot \dist(\bar{s},K_d) \ ,
		\end{aligned}
	\end{equation} where the equality $(\hat s+d)^\top d=d^\top d$ follows from $\hat s^\top d=0$.  It therefore follows that $\dist(\bar{s},K_d)  \le \frac{1}{\sqrt{\tau}} \cdot \rho(r;\bar{z}) $, which proves item (\textit{\ref{item_gap2}}.).

	Lastly, we examine the duality gap $\gap(\bar{x},\bar{s}) = c^\top \bar{x} - b^\top \bar{y}$, and we consider two cases, namely $\bar z = 0$ and $\bar{z} \ne 0$.  If $\bar{z} = 0$, then $\gap(\bar{x},\bar{s}) = c^\top \bar{x} - b^\top \bar{y}=0$, which satisfies the duality gap bound trivially.  If $\bar{z} \neq 0$, then define $z := \bar{z} - \min\{\frac{r}{\|\bar{z}\|_M},1\}\bar{z}$, which satisfies $\|z - \bar{z}\|_M \le r$. Substituting this value of $z$ in \eqref{eq  lm: convergence of PHDG without restart 1} yields:
	\begin{equation}\label{eq  lm: convergence of PHDG without restart 7}
		r \rho(r;\bar{z}) \ge L(\bar{x},y) - L(x,\bar{y}) = \min\left\{\frac{r}{\|\bar{z}\|_M},1\right\}(c^\top \bar{x} - b^\top \bar{y}) = \min\left\{\frac{r}{\|\bar{z}\|_M},1\right\}(c^\top \bar{x} +q^\top \bar s -q_0) \ ,
	\end{equation}
	which after rearranging yields
	\begin{equation}\label{eq  lm: convergence of PHDG without restart 8}
		\gap(\bar{w})  = c^\top \bar{x} +q^\top \bar s -q_0 \le   \max\{ r, \|\bar{z}\|_M\} \rho(r;\bar{z}) \ .
	\end{equation}
	This proves the desired bound in item (\textit{\ref{item_gap3}}.).\Halmos\endproof

\section{Step size ratio and equivalent reweighting on problem instances}\label{app:stepsizes_weighted_instances}

In implementations of PDHG, the ratio between the primal and dual step-sizes is
often treated as a tunable parameter. This ratio changes the relative scale of the
primal and dual updates, and it can substantially affect practical performance
\cite{applegate2023faster,applegate2021practical}. The purpose of this section
is to explain why, for a fixed step-size product, changing the primal-dual step-size
ratio is equivalent to applying the original step-sizes to a weighted formulation of
the same problem. Such equivalence has already been given for PDHG in the analysis of \cite{applegate2023faster}.  We state it in our notation and extend it to rPDHG.

Fix any step-size pair $(\tau,\sigma)$ satisfying \eqref{eq:general_stepsize}. For any $\theta>0$, define
\begin{equation}\label{eq:scaled-step-sizes}
        \tau_\theta := \frac{1}{\theta}\tau,
        \qquad
        \sigma_\theta := \theta\sigma .
\end{equation}
Then $\tau_\theta\sigma_\theta=\tau\sigma$, so the step-sizes satisfy \eqref{eq:general_stepsize}. Consider the following weighted instance:
\Equationvalidatefalse
\begin{equation}\label{pro:scaled-problem}
        \min_{\tilde x\in \mathbb R^n}
        \tilde c^{\top}\tilde x
        \quad
        \text{s.t.}
        \quad
        A\tilde x=\tilde b,\quad
        \tilde x\in K_p ,
        \tag{$\widetilde{\mathrm P}_\theta$}
\end{equation}
\Equationvalidatetrue%
where $\tilde c := \frac{1}{\sqrt{\theta}}c$ and	
        $\tilde b := \sqrt{\theta}b$.
Thus the weighted instance differs from the original instance only through the scaling of the
objective vector and the right-hand side. The constraint matrix $A$ and the cone $K_p$ are
unchanged. Moreover, because $K_p$ is a cone, $x$ is feasible for \eqref{pro: general primal clp} if and only if
$\tilde x=\sqrt{\theta}x$ is feasible for \eqref{pro:scaled-problem} and the
corresponding objective values are the same:
$\tilde c^{\top}\tilde x = c^{\top}x$.
The Lagrangian of \eqref{pro:scaled-problem} is
\begin{equation}\label{eq:scaled-Lagrangian}
        \widetilde L(\tilde x,\tilde y)
        :=
        \tilde c^{\top}\tilde x
        +
        \tilde b^{\top}\tilde y
        -
        \tilde x^{\top}A^{\top}\tilde y
        =
        L\left(\frac{\tilde x}{\sqrt{\theta}},
        \sqrt{\theta}\tilde y\right).
\end{equation}
Define the linear map $S_\theta: \mathbb R^{m+n} \to \mathbb R^{m+n}$, denoted in matrix form as:
\begin{equation}\label{eq:scaled-map}
        S_\theta :=
        \begin{pmatrix}
        \sqrt{\theta}I_n & 0 \\
        0 & \frac{1}{\sqrt{\theta}}I_m
        \end{pmatrix}.
\end{equation}

\begin{proposition}[Step sizes and weighted instances]
\label{prop:stepsizes_weighted_instances}
Let $\{z^k\}_{k\ge 0}$ be the PDHG iterates for the original instance \eqref{pro: general primal clp} with
step-sizes $(\tau_\theta,\sigma_\theta)$, and let
$\{\tilde z^k\}_{k\ge 0}$ be the PDHG iterates for the weighted instance
\eqref{pro:scaled-problem} with step-sizes $(\tau,\sigma)$. If
$\tilde z^0=S_\theta z^0$, then $\tilde z^k=S_\theta z^k$ for all $k\ge 0$. 

Furthermore, suppose that Algorithm \ref{alg: PDHG with restarts} is run on both instances using the
$\beta$-restart condition. If the initial points satisfy $\tilde z^{0,0}=S_\theta z^{0,0}$, then the
entire restarted trajectories coincide under the same transformation. In particular, $ \tilde z^{n,k}=S_\theta z^{n,k}$, $\overline{\tilde z}^{\,n,k}=S_\theta \bar z^{n,k}$, and $\tilde z^{n,0}=S_\theta z^{n,0}$ for all $n\ge 0$ and $k\ge 0$.
\end{proposition}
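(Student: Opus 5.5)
The plan is an induction on the PDHG iteration index: first show that a single \textsc{OnePDHG} step commutes with $S_\theta$, then lift this to averages, to the normalized duality gap, and finally to the restart decisions. For the single step, take $z=(x,y)$ and $\tilde z=(\tilde x,\tilde y)=(\sqrt{\theta}x, y/\sqrt{\theta})$. The weighted primal update with step $\tau$ reads
\[
\tilde x^+ = P_{K_p}\big(\tilde x-\tau(\tilde c-A^\top\tilde y)\big)=P_{K_p}\big(\sqrt{\theta}x-\tfrac{\tau}{\sqrt{\theta}}(c-A^\top y)\big).
\]
Because $K_p$ is a cone, $P_{K_p}(\alpha v)=\alpha P_{K_p}(v)$ for $\alpha>0$. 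Factoring out $\sqrt{\theta}$ therefore gives $\tilde x^+=\sqrt{\theta}\,P_{K_p}(x-\tau_\theta(c-A^\top y))=\sqrt{\theta}x^+$. For the dual update,
\[
\tilde y^+=\tfrac{y}{\sqrt{\theta}}+\sigma\big(\sqrt{\theta}b-A(2\sqrt{\theta}x^+-\sqrt{\theta}x)\big)=\tfrac{1}{\sqrt{\theta}}\big(y+\theta\sigma(b-A(2x^+-x))\big)=\tfrac{1}{\sqrt{\theta}}y^+ ,
\]
using $\sigma_\theta=\theta\sigma$. So OnePDHG commutes with $S_\theta$, and induction gives $\tilde z^k=S_\theta z^k$ for all $k$.

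For the restarted scheme, note that $S_\theta$ is linear, so it commutes with averaging: $\overline{\tilde z}^{n,k}=S_\theta\bar z^{n,k}$ whenever the inner iterates correspond. It then remains to show that both runs make identical restart decisions. The key intermediate claim is that the normalized duality gaps coincide:
\[
\tilde\rho(r;S_\theta z)=\rho(r;z),
\]
where $\tilde\rho$ uses $\widetilde L$ and $\widetilde M$ (built from $\tau,\sigma$), and $\rho$ uses $L$ and $M_\theta$ (built from $\tau_\theta,\sigma_\theta$). To prove this I would check three things.
\begin{enumerate}
\item The norms match: $S_\theta^\top \widetilde M S_\theta=M_\theta$, i.e.\ $\|S_\theta u\|_{\widetilde M}=\|u\|_{M_\theta}$. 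The diagonal blocks give $\theta/\tau=1/\tau_\theta$ and $1/(\theta\sigma)=1/\sigma_\theta$, and the off-diagonal $A$ blocks are unchanged because the factors $\sqrt{\theta}$ and $1/\sqrt{\theta}$ cancel.
\item The feasible balls match: $S_\theta$ maps $K_p\times\mathbb{R}^m$ onto itself, since $K_p$ is a cone. Combined with item 1, $S_\theta$ maps $B(r;z)$ bijectively onto $\widetilde B(r;S_\theta z)$.
\item The objectives match: by \eqref{eq:scaled-Lagrangian}, $\widetilde L(S_\theta u)=L(u)$, so $\widetilde L(\tilde x,\hat{\tilde y})-\widetilde L(\hat{\tilde x},\tilde y)=L(x,\hat y)-L(\hat x,y)$ under the correspondence.
\end{enumerate}
Taking the supremum over the corresponding balls gives equality of the $\rho$ values.

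With this in hand, the norms appearing as radii, such as $\|\bar z^{n,k}-z^{n,0}\|$ and $\|z^{n,0}-z^{n-1,0}\|$, are equal in the two runs by item 1. Hence both sides of \eqref{catsdogs} agree exactly between the runs, and the $\beta$-restart condition fires at the same $(n,k)$; the case $n=0$, $k=1$ is trivial. A double induction over $n$ and then $k$ finishes the proof, with the restart point $\tilde z^{n+1,0}=\overline{\tilde z}^{n,k}=S_\theta\bar z^{n,k}=S_\theta z^{n+1,0}$.

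The main obstacle is the identity $S_\theta^\top\widetilde M S_\theta=M_\theta$ together with keeping the two step-size conventions straight. Positive homogeneity of the cone projection is the one structural fact the argument relies on, and it is what requires $K_p$ to be a cone.
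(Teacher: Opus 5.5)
Your proposal is correct and follows the paper's route. The identity $S_\theta^\top M S_\theta = M_\theta$ (your $\widetilde M$ is the paper's $M$), the invariance $S_\theta(K_p\times\mathbb{R}^m)=K_p\times\mathbb{R}^m$, and the correspondence $\widetilde L(S_\theta u)=L(u)$ give equal normalized duality gaps, so the $\beta$-restart decisions coincide and induction over outer iterations finishes. The only difference is that you prove the one-step commutation of \textsc{OnePDHG} with $S_\theta$ directly, via positive homogeneity of $P_{K_p}$. The paper instead cites the standard rescaling argument of \cite{applegate2023faster} (their Footnote~5).
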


\proof{Proof.}
The equivalence of the non-restarted PDHG trajectories follows from the standard rescaling
argument for PDHG with different primal and dual step-sizes; see
\cite[Footnote~5]{applegate2023faster}. In the present notation, applying that result gives
$\tilde z^k=S_\theta z^k$ for all $k\ge 0$. 
Since averaging commutes with the linear map $S_\theta$, it also follows that $\overline{\tilde z}^{\,k}=S_\theta \bar z^k$ for the average iterates $\overline{\tilde z}$ and $\bar z$.

It remains to check that the restart condition is invariant under the same transformation. 
Recall that $M :=
        \left(\begin{smallmatrix}
        \tau^{-1}I_n & A^{\top}\\
        A & \sigma^{-1}I_m
        \end{smallmatrix}\right)$ in \eqref{robsummer} and let $M_\theta :=
        \left(\begin{smallmatrix}
        \tau_\theta^{-1}I_n & A^{\top}\\
        A & \sigma_\theta^{-1}I_m
        \end{smallmatrix}\right)$ be the corresponding matrix for the step-size \eqref{eq:scaled-step-sizes}. Using \eqref{eq:scaled-map}, a direct calculation gives
        $M_\theta = S_\theta^{\top} M S_\theta$.
Hence, for any $z,z'$, it holds that
\begin{equation}\label{eq:scaled-norm}
        \|z-z'\|_{M_\theta}
        =
        \|S_\theta z-S_\theta z'\|_{M}.
\end{equation}
Furthermore, for $\tilde z=S_\theta z$ and
$\widehat{\tilde z}=S_\theta\widehat z$, it follows from
\eqref{eq:scaled-Lagrangian} that
\begin{equation}\label{eq:scaled-lagragian}
        \widetilde L(\tilde x,\widehat{\tilde y})
        -
        \widetilde L(\widehat{\tilde x},\tilde y)
        =
        L(x,\widehat y)-L(\widehat x,y).
\end{equation}
Also, since $K_p$ is a cone, $S_\theta(K_p\times \mathbb R^m)=K_p\times \mathbb R^m$.
Therefore the balls used in the definition of the normalized duality gap are mapped
bijectively by $S_\theta$, and the normalized duality gaps agree:
\begin{equation}\label{eq:scaled-gap}
        \tilde \rho(r;S_\theta z)
        =
        \rho_\theta(r;z)
\end{equation} 
for all $r>0$,
where $\rho_\theta$ denotes the normalized duality gap for the original instance with matrix
$M_\theta$, and $\tilde \rho$ denotes the normalized duality gap for the weighted instance
with matrix $M$.

Therefore the
$\beta$-restart condition is satisfied in the original instance if and only if it is satisfied in
the weighted instance. The restart times coincide, and the equivalence of the restart points
then follows by induction over the outer iterations.
\Halmos\endproof

\section{Computing the normalized duality gap}\label{appendix:compute_rho}

We first show in Section \ref{subsec:compute_rho} a methodology for computing the normalized duality gap $\rho(r;z)$.  In Section \ref{subsec:approx_rho} we modify this methodology by changing the norm from the $M$-norm $\|\cdot\|_M$ to a different norm $\|\cdot\|_N$ which we call the $N$-norm, and whose associated normalized duality gap is denoted by $\rho^N(r;z)$. Throughout this appendix we assume that  $\sigma, \tau$ satisfy \eqref{eq:general_stepsize}, which then implies that $M \succeq 0$ in \eqref{robsummer}. The strategy presented herein for computing and approximating $\rho(r;z)$ is a generalization of the method developed by \cite{applegate2023faster} for the particular case of LP.

\subsection{Computing the normalized duality gap}\label{subsec:compute_rho}
Throughout this subsection we suppose that $\tau$ and $\sigma$ satisfy \eqref{eq:general_stepsize}, so that $M\succeq0$. Computing $\rho(r;z)$ for  $z\in\bar{K}:= K_p \times \mathbb{R}^m$ and $r > 0$ is basically equivalent to solving the following convex optimization problem:
\begin{equation}\label{pro:compute_rho}
	\max_{ \hat{z} = (\hat{x},\hat{y}): \ \hat{x}\in K_p, \|\hat{z} -z\|_M \le r }\left[
		L(x,\hat{y}) - L(\hat{x},y)
		\right] = \left(
	\begin{array}{ll}
			\max_{\hat{z}} \       & h^\top (\hat{z} - z)                           \\
			\operatorname{s.t.} \  & \hat{z}\in \bar{K}, \ \|z-\hat{z}\|_M^2\le r^2
		\end{array}
	\right)
\end{equation}
in which $z = (x,y)$ and $h = \begin{pmatrix}
	h_1 \\
	h_2
\end{pmatrix}:=
\begin{pmatrix}
	A^\top y - c \\
	b - A x
\end{pmatrix}\in\mathbb{R}^{n+m}$. Suppose that $\hat{z}^\star$ is an optimal solution of \eqref{pro:compute_rho}; then $\rho(r;z)$ is obtained by
$$
\rho(r;z) = \frac{h^\top \big(\hat{z}^\star - z\big)}{r}\ .
$$

We now show how to construct an optimal solution of \eqref{pro:compute_rho}. Consider the following parameterized optimization problem over the parameter $t \ge 0$, with optimal solution $z(t)$ :
\begin{equation}\label{eq:z_t}
	z(t):= \arg\max_{\tilde{z}\in \bar{K}}  \ 
	t\cdot h^\top (\tilde{z} - z) - \|\tilde{z} - z\|_M^2 \ .
\end{equation} 
This problem essentially replaces the norm constraint $\|\hat{z} - z\|_M^2 \le r^2$ in \eqref{pro:compute_rho} with a penalty term $-\tfrac{\|\hat{z} - z\|_M^2}{t}$ in the objective function. The following lemma shows that solving \eqref{pro:compute_rho} is essentially a root-finding problem of the univariate function $f(t):=\|z - z(t)\|_M - r$ defined on $t \in [0,\infty)$.

\begin{lemma}\label{lm:compute_rho}
	Suppose that $\tau$ and $\sigma$ satisfy \eqref{eq:general_stepsize}. Suppose $t^\star$ satisfies $t^\star>0$ and $f(t^\star) = 0$. Then $z(t^\star)$ is an optimal solution of \eqref{pro:compute_rho}. 
\end{lemma}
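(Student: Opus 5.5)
The argument is a penalty-to-constraint comparison. Fix $t^\star>0$ with $f(t^\star)=0$, so $\|z(t^\star)-z\|_M=r$. Write $\phi(\tilde z):=h^\top(\tilde z-z)$. Note that $z(t^\star)$ is a maximizer of $t^\star\phi(\tilde z)-\|\tilde z-z\|_M^2$ over $\bar K$; we take existence as given, since the lemma presupposes $z(\cdot)$ is well defined, e.g.\ when $M\succ0$ or when the penalized problem is bounded. In particular $z(t^\star)\in\bar K$, and with $\|z(t^\star)-z\|_M=r$ it is feasible for \eqref{pro:compute_rho}.

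Now take any feasible $\hat z$ for \eqref{pro:compute_rho}, so $\hat z\in\bar K$ and $\|\hat z-z\|_M^2\le r^2$. Optimality of $z(t^\star)$ in \eqref{eq:z_t} gives
\[
t^\star\phi(z(t^\star))-r^2 \;\ge\; t^\star\phi(\hat z)-\|\hat z-z\|_M^2 \;\ge\; t^\star\phi(\hat z)-r^2 .
\]
Dividing by $t^\star>0$ yields $\phi(z(t^\star))\ge\phi(\hat z)$. Hence $z(t^\star)$ attains the maximum of \eqref{pro:compute_rho}, which is the claim.

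This is essentially a Lagrangian or KKT sufficiency argument with multiplier $1/t^\star$ on the ball constraint. No convexity beyond the definition of $z(t)$ as a global maximizer is needed.

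The only delicate point is that $M$ is merely positive semidefinite. The argument above never uses strict convexity or uniqueness, so any maximizer serves as $z(t^\star)$. I will remark that if the $\arg\max$ is set-valued, the conclusion holds for every selection satisfying $f(t^\star)=0$. I expect no real obstacle beyond stating this well-definedness caveat.
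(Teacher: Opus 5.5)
Your proof is correct and uses the same idea as the paper: the penalty parameter $t^\star$ acts as the multiplier $\lambda^\star = 1/t^\star$ on the ball constraint, which is active because $f(t^\star)=0$. The paper checks this by turning the first-order conditions \eqref{eq:z_t_opt} into the KKT system \eqref{eq:rho_KKT}--\eqref{eq:rho_KKT_1} and invoking KKT sufficiency. Your direct comparison of objective values reaches the same conclusion more elementarily, using only that $z(t^\star)$ is a global maximizer, with no first-order conditions or convexity.
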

\noindent
Based on this lemma, we consider using the bisection method to find the root of $f(t)$ in the region $t \in (0,\infty)$. Notice that $f(0) < 0$.  We can compute $f(t)$ for an increasing sequence of values of $t$, for example $t_k := 2^k$ for $k=1, 2, \ldots$, until we obtain $k$ for which $f(t_k) \ge 0$. Let $K$ denote the first value of $k$ for which $f(t_k) \ge 0$. Then $f(t_{K-1})$ has a different sign than $f(t_{K})$, and so $[t_{K-1},t_K]$ contains a root of $f(t)$ which can be computed using the bisection method. In the special case that none of the $t_k$ satisfy $f(t_k)\ge 0$, the following lemma shows that $\frac{h^\top (z(t_k) - z)}{r}$ itself converges to $\rho(r;z)$ at a conveniently bounded rate.

\begin{lemma}\label{lm:compute_rho_2}  Suppose that $\tau$ and $\sigma$ satisfy \eqref{eq:general_stepsize}. If $t>0$ and $f(t) < 0$, then $\left|\frac{h^\top (z(t) - z)}{r} - \rho(r;z)\right|\le \frac{r}{t}$ . 
\end{lemma}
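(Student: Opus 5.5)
We need to prove Lemma lm:compute_rho_2: if $t>0$ and $f(t)<0$, then $|h^\top(z(t)-z)/r - \rho(r;z)|\le r/t$.

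The plan is to compare $z(t)$ with an optimal solution $\hat z^\star$ of \eqref{pro:compute_rho}, using only the optimality of $z(t)$ in the penalized problem \eqref{eq:z_t}. Since $f(t)<0$, we have $\|z(t)-z\|_M< r$. Also $z(t)\in\bar K$, so $z(t)$ is feasible for \eqref{pro:compute_rho}. Hence $h^\top(z(t)-z)\le h^\top(\hat z^\star - z) = r\rho(r;z)$, which is the lower half of the absolute-value bound: $\rho(r;z) - h^\top(z(t)-z)/r\ge 0$.

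For the reverse bound, $\hat z^\star\in\bar K$ is feasible for the penalized problem \eqref{eq:z_t}. Optimality of $z(t)$ then gives
\begin{equation*}
t\, h^\top(z(t)-z) - \|z(t)-z\|_M^2 \;\ge\; t\, h^\top(\hat z^\star - z) - \|\hat z^\star - z\|_M^2 \;\ge\; t\, r\rho(r;z) - r^2 .
\end{equation*}
Dropping the nonnegative term $\|z(t)-z\|_M^2$ on the left and dividing by $tr$ yields $h^\top(z(t)-z)/r \ge \rho(r;z) - r/t$. Combined with the first inequality, this gives $|h^\top(z(t)-z)/r - \rho(r;z)|\le r/t$.

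The main technical point is existence and attainment: that $z(t)$ and $\hat z^\star$ exist. When $M$ is only positive semidefinite, $\|\cdot\|_M$ is a seminorm, so the feasible region $\{\hat z\in\bar K:\|\hat z - z\|_M\le r\}$ may be unbounded. I would handle this as follows. If the supremum in \eqref{pro:compute_rho} is not attained, run the same argument with near-optimal feasible points $\hat z$ satisfying $h^\top(\hat z - z)\ge r\rho(r;z)-\eta$, and then let $\eta\to0$. The lemma's statement already presumes that $z(t)$ is well defined, and the argument uses only that presumption. If $\rho(r;z)=+\infty$, the second inequality would force the penalized objective to be unbounded, which contradicts the existence of $z(t)$. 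So finiteness of $\rho$ follows as well.
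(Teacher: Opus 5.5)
Your argument is correct, but it takes a different route from the paper. The paper uses Lagrangian duality. Assuming $M\succ 0$, it writes down the dual \eqref{bertsekas} of \eqref{pro:compute_rho}. It then uses the optimality conditions \eqref{eq:z_t_opt} of the penalized problem to show that $\lambda:=1/t$, $s:=2\lambda M z(t)-2\lambda Mz-h$ is dual feasible. Finally it bounds the primal--dual gap of the pair $(z(t),(\lambda,s))$ by $(r^2-\|z-z(t)\|_M^2)/t\le r^2/t$. The singular case $M\succeq 0$ is only asserted to go through.

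You instead make a direct primal comparison. Feasibility of $z(t)$ in \eqref{pro:compute_rho} gives one side. Comparing the penalized objective \eqref{eq:z_t} at $z(t)$ with its value at an optimal or near-optimal point of \eqref{pro:compute_rho} gives the other. This needs no dual problem, no $M^{-1}$, and no KKT conditions; it uses only that $z(t)$ is a maximizer. It therefore handles the positive semidefinite case rigorously, including possible non-attainment of the supremum defining $\rho(r;z)$, which is exactly the case the paper glosses over.

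If you keep the term $\|z(t)-z\|_M^2$ instead of dropping it, you also recover the paper's sharper intermediate estimate $\rho(r;z)-h^\top(z(t)-z)/r\le (r^2-\|z(t)-z\|_M^2)/(rt)$. So nothing is lost relative to the duality argument. What the paper's route offers is mainly consistency with the KKT-based framing used for Lemma \ref{lm:compute_rho}.
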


The main computational bottleneck of the above scheme is solving \eqref{eq:z_t}. If $M$ is full-rank (i.e., \eqref{eq:general_stepsize} holds strictly), the objective function of \eqref{eq:z_t} is strongly concave and smooth. Furthermore, let us presume that the task of computing the projection onto $\bar{K}$ under the Euclidean norm is reasonable (as it is for the nonnegative orthant and the cross-product of second-order cones for example).  Then projected gradient ascent and its accelerated versions can be applied to \eqref{eq:z_t} with linear convergence rates, see \cite{lan2020first}. Moreover, since the restart condition does not need to be checked frequently in practice, the cost of computing the normalized duality gap can be further reduced if not computed very often. 

\subsubsection{Proofs of Lemmas \ref{lm:compute_rho} and \ref{lm:compute_rho_2}}
We now proceed with the proofs of Lemmas \ref{lm:compute_rho} and \ref{lm:compute_rho_2}. We first recall the optimality conditions for problems \eqref{pro:compute_rho} and \eqref{eq:z_t}.
Since strong duality holds for \eqref{eq:z_t}, for each $t \ge 0$ the optimal solution $z(t)$ of \eqref{eq:z_t} must satisfy the following conditions:
	\begin{equation}\label{eq:z_t_opt}
	z(t) \in \bar{K}, \ s:= 2M z(t) - 2Mz - t\cdot h \in  \bar{K}^*, \text{ and  } (z(t))^\top s = 0 \ .
	\end{equation} 
Regarding problem \eqref{pro:compute_rho}, $\hat{z}^\star$ is an optimal solution of \eqref{pro:compute_rho} if there exists a scalar multiplier $\lambda^\star$ that together with $\hat{z}^\star$ satisfy the KKT optimality conditions:
	\begin{equation}\label{eq:rho_KKT}
		\text{Inclusions: } \hat{z}^\star \in \bar{K}, \ s^\star :=    2 \lambda^\star M\hat{z}^\star - 2 \lambda^\star Mz -h \in \bar{K}^*, \ \lambda^\star \ge 0,  \  \|z-\hat{z}^\star\|_M^2\le r^2,   \text{ and }
	\end{equation}
	\begin{equation}\label{eq:rho_KKT_1}
		\text{Complementarity: } \  (\hat{z}^\star)^\top s^\star = 0 , \ \text{ and }  \lambda^\star \cdot \left( r^2 -  \|z-\hat{z}^\star\|_M^2\right) = 0 \ . \ \ \ \ \ \ \ \ \ \ \ \ \ \ \ \ \ \ \ \ \ \ \ \ \ \ \ \ 
	\end{equation}

\proof{Proof of Lemma \ref{lm:compute_rho}.}
	If $t^\star>0$ and $\|z - z(t^\star)\|_M = r$, then it follows from \eqref{eq:z_t_opt} and $f(t^\star) = 0 $ that $z^\star  := z(t^\star)$ and $\lambda^\star := \frac{1}{t^\star}$ satisfy the optimality conditions \eqref{eq:rho_KKT} and \eqref{eq:rho_KKT_1}, and therefore $z(t^\star)$ is an optimal solution of \eqref{pro:compute_rho}.	
\Halmos\endproof

\proof{Proof of Lemma \ref{lm:compute_rho_2}.}	
	We first suppose that $M$ is positive definite, in which case using a standard Lagrangian construction one can derive the following dual problem of \eqref{pro:compute_rho}:
\begin{equation}\label{bertsekas}
	\min_{ \lambda \ge0, \ s \in \bar K^* } \frac{1}{4\lambda}\|s+h\|_{M^{-1}}^2 + z^\top s + \lambda r^2\end{equation}
Now define $\lambda:=\frac{1}{t}$ and $s:= 2\lambda M z(t) - 2\lambda Mz -  h$, whereby from \eqref{eq:z_t_opt} it follows that $(\lambda,s)$ is feasible for \eqref{bertsekas}.  Also, since $f(t)<0$ we have $z(t)$ is feasible for \eqref{pro:compute_rho} and the duality gap of this pair of primal and dual solutions works out to be exactly $\lambda(r^2 - \|z-z(t) \|_M^2) = \tfrac{r^2 - \|z-z(t) \|_M^2}{t} $ which can be verified by simple arithmetic manipulation. Let the optimal objective value of \eqref{pro:compute_rho}  be $g^\star$; then $|h^\top (z(t) - z)  - g^\star | \le \tfrac{r^2 - \|z-z(t) \|_M^2}{t}$, from which it follows that $\left|\frac{h^\top (z(t) - z)}{r} - \rho(r;z)\right| = \left|\frac{h^\top (z(t) - z)}{r} - \frac{g^\star}{r}\right| \le \tfrac{r^2 - \|z-z(t) \|_M^2}{rt} \le \tfrac{r^2}{rt} = \tfrac{r}{t} $. This proves the result for the case when $M$ is positive definite.  
	
If $M$ is not positive definite, then under the assumption that $\sigma, \tau$ satisfy \eqref{eq:general_stepsize} we have $M \succeq 0$ \eqref{robsummer}. In this case the dual problem of \eqref{pro:compute_rho} no longer has the very convenient expression \eqref{bertsekas}, but all of the properties of the proof in the previous paragraph follow nevertheless. \Halmos\endproof

Note that the proofs of Lemmas \ref{lm:compute_rho} and \ref{lm:compute_rho_2} are also valid if we replace $M$ by another positive semidefinite matrix $\tilde{M}$ to define a $\tilde M$-norm, and let us denote the normalized duality gap using the $\tilde M$-norm as $\rho^{\tilde{M}}(r;z)$. In Section \ref{subsec:approx_rho} we will show that with a proper choice of $\tilde{M}$, $\rho^{\tilde{M}}(r;z)$ will provide a good approximation of $\rho^M(r;z)$ but with significantly lower computational cost of solving \eqref{eq:z_t} .

\subsection{Approximating  the normalized duality gap}\label{subsec:approx_rho}

In Section \ref{subsec:compute_rho} we showed that computing $\rho(r;z)$ can be accomplished by parametrically solving the optimization problem \eqref{eq:z_t}, which is equivalent to a certain projection onto the cone $\bar{K}=K_p\times \mathbb{R}^m$ in the $M$-norm. Although PDHG is premised on the notion that Euclidean projections onto $\bar{K}$ are simple to compute (see Lines \ref{line:update_x} and \ref{line:update_y} of \textsc{OnePDHG} in Algorithm \ref{alg: one PDHG}), projections onto $\bar K$ under the $M$-norm might be significantly more difficult.  In this subsection we describe how to efficiently approximate $\rho(r;z)$ by working with a different matrix norm, namely the $N$-norm which was introduced in the proof of Lemma \ref{lm change of norm}, and for which the equivalent optimization problem \eqref{eq:z_t} works out to be a Euclidean projection onto $K_p$.

The $N$-norm is the matrix norm $\|z\|_N$ in which $N := \Big(\begin{smallmatrix}
		\frac{1}{\tau}I_n &                     \\
		                  & \frac{1}{\sigma}I_m
	\end{smallmatrix}\Big)$, which was introduced in the proof of Lemma \ref{lm change of norm}. Let $\rho^N(r;z)$ denote the corresponding normalized duality gap function in $N$-norm.  We now show that in the $N$-norm, solving $z(t)$ of \eqref{eq:z_t} is simply a Euclidean projection onto $K_p$. Because $\bar{K} = K_p \times \mathbb{R}^m$ and $\|z\|_N^2 = \frac{1}{\tau}\|x\|^2 + \frac{1}{\sigma} \|y\|^2$, \eqref{eq:z_t} can be separated into two independent problems:
\begin{equation}\label{eq:z_t_2}
\begin{aligned}
	z(t) = (x(t),y(t)) & = \Big(
	\arg\max_{\tilde{x} \in K_p} \ t\cdot h_1^\top \tilde{x} - \tfrac{1}{\tau}\|\tilde{x} - x\|^2,  
	\arg\max_{\tilde{y} \in \mathbb{R}^m} \ t\cdot h_2^\top \tilde{y} - \tfrac{1}{\sigma}\|\tilde{y} - y\|^2\Big) \\
	& = \left(P_{K_p}\left(x + \tfrac{t\tau}{2} \cdot h_1\right), y + \tfrac{t\sigma}{2}\cdot h_2 \right) \ .
\end{aligned}
\end{equation} 
Hence the primary computational cost of computing $z(t)$ in the $N$-norm is just the Euclidean projection onto $K_p$, which is no more of a computational burden than Line \ref{line:update_x} of \textsc{OnePDHG} in Algorithm \ref{alg: one PDHG}, and might be considerably easier than the $M$-norm projection onto $\bar{K}$.

Furthermore, the following proposition shows that $\rho^N(r;z)$ is equivalent to $\rho(r;z)$ up to a constant factor so long as the step-sizes are chosen a bit conservatively.
\begin{proposition}\label{pr:rhoNrho}
	If $\tau,\sigma$ satisfy \eqref{eq:general_stepsize} strictly, then for any $z\in \bar{K}$ and $r>0$ it holds that:
	\begin{equation}\label{eq:lm:rhoNrho}
		\frac{1}{\sqrt{2}}\cdot \rho^N\left( r;z\right) \le
		\rho(r;z) \le \frac{1}{\sqrt{1-\sqrt{\tau \sigma}\lambda_{\max }}}\cdot \rho^N\left( r;z\right)  \ .
	\end{equation}
\end{proposition}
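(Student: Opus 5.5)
The plan is to reduce the comparison of $\rho$ and $\rho^N$ to a comparison of the norms $\|\cdot\|_M$ and $\|\cdot\|_N$, via a scaling property of the normalized duality gap. The key observation is that for fixed $z\in\bar K$ the feasible region of the sup in \eqref{sunny} is the convex set $\{\hat z\in\bar K:\ \|\hat z-z\|\le r\}$, and the objective $L(x,\hat y)-L(\hat x,y)=h^\top(\hat z-z)$ is linear in $\hat z$, as written in \eqref{pro:compute_rho}. Therefore if two norms satisfy $a\|u\|_N\le\|u\|_M\le b\|u\|_N$ for all $u$, the balls are nested: $\{\|\hat z - z\|_M\le r\}\subseteq\{\|\hat z-z\|_N\le r/a\}$ and $\{\|\hat z-z\|_N\le r/b\}\subseteq\{\|\hat z-z\|_M\le r\}$.

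The second ingredient is monotonicity of $r\mapsto r\rho(r;z)$ and of $\rho$ itself. Define $G(r):=\sup\{h^\top(\hat z-z):\hat z\in\bar K,\ \|\hat z-z\|\le r\}$. This is concave and nondecreasing in $r$ with $G(0)=0$ (take $\hat z=z$), so $G(r)/r$ is nonincreasing. Hence for $r'\ge r$ we have $G(r')\le (r'/r)G(r)$, and for $r'\le r$ we have $G(r')\ge (r'/r)G(r)$.

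First I will establish the norm equivalence. The proof of Lemma \ref{lm change of norm} already gives $M\preceq 2N$, so $\|u\|_M\le\sqrt2\|u\|_N$. For the lower bound I write $M=N+E$ with $E$ the off-diagonal block. Then $|u^\top Eu|=2|x^\top A^\top y|\le 2\lambda_{\max}\|x\|\|y\|\le\sqrt{\tau\sigma}\lambda_{\max}\|u\|_N^2$ by AM-GM with weights $1/\tau,1/\sigma$. This yields $\|u\|_M^2\ge(1-\sqrt{\tau\sigma}\lambda_{\max})\|u\|_N^2$, which is positive under strict \eqref{eq:general_stepsize}. So $a=\sqrt{1-\sqrt{\tau\sigma}\lambda_{\max}}$ and $b=\sqrt2$.

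Then I combine the two ingredients. For the left inequality, the $N$-ball of radius $r/\sqrt2$ lies inside the $M$-ball of radius $r$, so $r\rho(r;z)=G_M(r)\ge G_N(r/\sqrt2)\ge\frac{1}{\sqrt2}G_N(r)=\frac{r}{\sqrt2}\rho^N(r;z)$. For the right inequality, the $M$-ball of radius $r$ lies inside the $N$-ball of radius $r/a$, so $G_M(r)\le G_N(r/a)\le\frac1a G_N(r)$, giving $\rho\le\rho^N/a$.

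The only delicate points are the concavity of $G$ (a standard perturbation argument, or directly: convex combinations of feasible points for radii $r_1,r_2$ are feasible for the averaged radius) and the AM-GM weighting in the norm comparison; neither should be a real obstacle.
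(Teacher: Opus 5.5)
Your proposal is correct and follows essentially the same route as the paper. The norm sandwich $\sqrt{1-\sqrt{\tau\sigma}\lambda_{\max}}\,\|\cdot\|_N\le\|\cdot\|_M\le\sqrt2\,\|\cdot\|_N$ gives the nested balls, and the fact that $r\mapsto\rho^N(r;z)$ is nonincreasing (your $G_N(r)/r$) carries the rescaled radii back to $r$; the only difference is that the paper cites these two ingredients (Lemmas \ref{lm:MNnorms} and \ref{lm:monotonicity_of_rho}), whereas you prove them directly via the weighted AM--GM bound and the concavity of $G$ with $G(0)=0$, both of which are sound.
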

\noindent
For example, if $\sqrt{\sigma\tau} = \tfrac{1}{2\lambda_{\max}}$, then \eqref{eq:lm:rhoNrho} becomes $\frac{1}{\sqrt{2}}\cdot \rho^N\left( r;z\right) \le  \rho(r;z) \le \sqrt{2}\cdot \rho^N\left( r;z\right)$. In practice we have used $\rho^N\left( r;z\right)$ instead of $\rho\left( r;z\right)$ to evaluate the restart condition, and it can be proven that a computational guarantee of rPDHG still holds. This technique has also been used in \cite{applegate2023faster,applegate2021practical}. 

The proof of Proposition \ref{pr:rhoNrho} uses the following two lemmas.
\begin{lemma}{\bf(Proposition 2.6 of \cite{xiong2023computational})}\label{lm:MNnorms}
	If $\tau,\sigma$ satisfy \eqref{eq:general_stepsize}, then for any $z\in\mathbb{R}^{n+m}$ it holds that $\sqrt{1-\sqrt{\tau \sigma} \lambda_{\max }} \cdot\|z\|_N \leq\|z\|_M \leq \sqrt{2}\|z\|_N$.
\end{lemma}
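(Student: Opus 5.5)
The approach is to compare the two quadratic forms directly, via the block structure of $M$ and $N$. We have $M = N + E$, where $E := \Big(\begin{smallmatrix} 0 & A^\top \\ A & 0\end{smallmatrix}\Big)$. For $z=(x,y)$ this gives
\[
\|z\|_M^2 = \tfrac{1}{\tau}\|x\|^2 + \tfrac{1}{\sigma}\|y\|^2 + 2y^\top A x .
\]
Both inequalities will therefore follow from a two-sided bound on the cross term $2y^\top Ax$ in terms of $\|z\|_N^2$.

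The first step bounds the cross term. Cauchy--Schwarz and the definition of $\lambda_{\max}$ give $|2y^\top Ax| \le 2\lambda_{\max}\|x\|\|y\|$. I rewrite this as
\[
2\lambda_{\max}\sqrt{\tau\sigma}\cdot\Big(\tfrac{\|x\|}{\sqrt\tau}\Big)\Big(\tfrac{\|y\|}{\sqrt\sigma}\Big),
\]
and then apply AM--GM, $2ab\le a^2+b^2$, to conclude
\[
|2y^\top Ax| \le \sqrt{\tau\sigma}\,\lambda_{\max}\,\|z\|_N^2 .
\]

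The second step obtains both bounds from this estimate. For the upper bound, $\|z\|_M^2 \le (1+\sqrt{\tau\sigma}\lambda_{\max})\|z\|_N^2$. The step-size condition \eqref{eq:general_stepsize} gives $\sqrt{\tau\sigma}\lambda_{\max}\le 1$, so $\|z\|_M^2 \le 2\|z\|_N^2$; this matches the $2N-M\succeq 0$ argument already used in the proof of Lemma \ref{lm change of norm}. For the lower bound, $\|z\|_M^2 \ge (1-\sqrt{\tau\sigma}\lambda_{\max})\|z\|_N^2$, and the coefficient is nonnegative under \eqref{eq:general_stepsize}. Taking square roots yields the stated inequalities.

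I do not expect a real obstacle here. The only care needed is to pair the scalings $1/\sqrt\tau$ and $1/\sqrt\sigma$ correctly so that the factor $\sqrt{\tau\sigma}$ appears, and to note that $1-\sqrt{\tau\sigma}\lambda_{\max}\ge 0$, so that the square root is well defined.
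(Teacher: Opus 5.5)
Your proposal is correct and is essentially the standard argument. The paper only cites this result, but the same comparison appears as the fact $2N-M\succeq0$ in the proof of Lemma \ref{lm change of norm}; your cross-term bound $|2y^\top Ax|\le\sqrt{\tau\sigma}\,\lambda_{\max}\|z\|_N^2$, via Cauchy--Schwarz and AM--GM, gives both that fact and its counterpart $M-(1-\sqrt{\tau\sigma}\lambda_{\max})N\succeq0$, with nonnegativity of $1-\sqrt{\tau\sigma}\lambda_{\max}$ supplied by \eqref{eq:general_stepsize}.
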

\begin{lemma}{\bf(Proposition 5 of \cite{applegate2023faster})}\label{lm:monotonicity_of_rho}
	For any $z\in\bar K$, $\rho^N(r;z)$ is monotonically nonincreasing in $r\in(0,\infty)$.
\end{lemma}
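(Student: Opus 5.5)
The plan is to show that the unnormalized quantity $g(r):=r\,\rho^N(r;z)$ satisfies $g(r_1)\ge \tfrac{r_1}{r_2}\,g(r_2)$ whenever $0<r_1<r_2$. This is just the statement that $\rho^N(\cdot;z)=g(\cdot)/(\cdot)$ is nonincreasing. The argument is a scaling construction: shrink a feasible point of the larger ball toward $z$.

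First I will rewrite the objective in the definition of $\rho^N$ as a linear function of $\hat z$, as in \eqref{pro:compute_rho}. Expanding $L$ gives
\[
L(x,\hat y)-L(\hat x,y)=h^\top(\hat z-z), \qquad h=(A^\top y-c,\; b-Ax).
\]
This expression is affine in $\hat z$ and vanishes at $\hat z=z$. Hence
\[
g(r)=\sup\{h^\top(\hat z-z):\hat z\in\bar K,\ \|\hat z-z\|_N\le r\}.
\]
Since $N\succ0$, the feasible set is compact, so $g(r)$ is finite. It is also nonnegative, because $\hat z=z$ is feasible; this is where the hypothesis $z\in\bar K$ is used.

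Next, fix $0<r_1<r_2$ and any $\hat z$ feasible for radius $r_2$. Define $\lambda:=r_1/r_2\in(0,1)$ and $\tilde z:=z+\lambda(\hat z-z)=(1-\lambda)z+\lambda\hat z$. The point $\tilde z$ lies in $\bar K=K_p\times\mathbb{R}^m$, since $\bar K$ is convex and contains both $z$ and $\hat z$. It also satisfies $\|\tilde z-z\|_N=\lambda\|\hat z-z\|_N\le r_1$. Thus $\tilde z$ is feasible for radius $r_1$, and by linearity its objective value is $h^\top(\tilde z-z)=\lambda\,h^\top(\hat z-z)$. Taking the supremum over $\hat z$ gives $g(r_1)\ge \lambda g(r_2)$. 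Dividing by $r_1$ yields $\rho^N(r_1;z)\ge\rho^N(r_2;z)$.

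There is no real obstacle here. The only points to verify carefully are that the objective has no constant term, so that it is exactly homogeneous in $\hat z-z$, and that $z\in\bar K$, which is needed so that the convex combination stays in $\bar K$. The same argument works verbatim for the $M$-norm whenever $M\succeq0$; in that case one only needs to note that $g$ may be $+\infty$, and the inequality still holds in the extended sense.
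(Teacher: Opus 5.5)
Your proof is correct. It is essentially the standard argument behind Proposition~5 of \cite{applegate2023faster}, which the paper cites rather than proves: shrink a feasible point of the radius-$r_2$ ball toward $z$ by the factor $r_1/r_2$, use convexity of $\bar K$ together with $z\in\bar K$ for feasibility, and use that the gap function vanishes at $\hat z=z$ and is concave (here exactly affine, $h^\top(\hat z-z)$) in $\hat z$ to get $r_1\rho^N(r_1;z)\ge \tfrac{r_1}{r_2}\, r_2\rho^N(r_2;z)$.
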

\proof{Proof of Proposition \ref{pr:rhoNrho}.}
	From Lemma \ref{lm:MNnorms} we have $\big\{\hat{z}:\|\hat{z} - z\|_N\le  \frac{r}{\sqrt{2}}
	\big\}\subseteq \left\{\hat{z}:\|\hat{z} - z\|_M\le r
	\right\} \subseteq \big\{\hat{z}:\|\hat{z} - z\|_N\le  \frac{r}{\sqrt{1-\sqrt{\tau \sigma}\lambda_{\max }}}
	\big\}$, which leads to
	\begin{equation}\label{eq:lm:rhoNrho 2}
		\textstyle
		\frac{r}{\sqrt{2}}\cdot \rho^N\left( \frac{r}{\sqrt{2}};z\right) \le
		r\cdot \rho(r;z) \le \frac{r}{\sqrt{1-\sqrt{\tau \sigma}\lambda_{\max }}}\cdot \rho^N\Big( \frac{r}{\sqrt{1-\sqrt{\tau \sigma}\lambda_{\max }}};z\Big)  \
	\end{equation}
	for any $r$ and $z$, because of the inclusion relationship between the feasible sets of the corresponding optimization problems. Finally, applying Lemma \ref{lm:monotonicity_of_rho} to \eqref{eq:lm:rhoNrho 2} yields \eqref{eq:lm:rhoNrho}.
\Halmos\endproof

\section{Proof of Lemma \ref{lm r_delta over delta}}\label{app:proof of lm r calW small}

Before presenting the proof we first discuss a related result.
For the general primal and dual problems \eqref{pro: general primal clp} and \eqref{pro: general dual clp on s}, \cite{freund2003primal} proves a geometric relationship between the primal and dual level sets of \eqref{pro: general primal clp} and \eqref{pro: general dual clp on s} which we now describe. Under Assumption \ref{assump:striclyfeasible} the problems \eqref{pro: general primal clp} and \eqref{pro: general dual clp on s} have a common optimal objective value $f^\star$.  Then for any $\eps,\delta\in\mathbb{R}_+$, define
\begin{equation}\label{eq:def_R_r}
	\bar{R}_\eps:=
	\left(\begin{array}{cc}
			\max\limits_{x}     & \|x\|                         \\
			\operatorname{s.t.} & x \in V_p \cap K_p            \\
			                    & c^\top x \le  f^\star + \eps
		\end{array}
	\right)
	\ \text{ and } \ \bar{r}_\delta:=  \left(\begin{array}{cc}
			\max\limits_{s}     & \max\limits_{r:B(s,r) \subseteq K_d}r \\
			\operatorname{s.t.} & s \in V_d \cap K_d                    \\
			                    &  - q^\top s + q_0 \ge f^\star - \delta
		\end{array}
	\right) \ .
\end{equation}
Recall the definition of the width $\width_K$ of cone $K$ in Definition \ref{def width of cone}, and then the product of $\bar{R}_\eps$ and $\bar{r}_\delta$ has both lower and upper bounds.
\begin{lemma}{\bf (Geometric relationship between primal and dual level sets, Theorem 3.2 of \cite{freund2003primal})}\label{lm:primal-dual-sublevelsets}
	Under Assumption \ref{assump:striclyfeasible}, for any $\eps,\delta\in\mathbb{R}_+$ it holds that
	\begin{equation}
		\width_{K_d} \cdot \min\{\delta, \eps\} \le  \bar{r}_\delta\bar{R}_\eps  \le \delta + \eps \ .
	\end{equation}
\end{lemma}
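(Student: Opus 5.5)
The upper bound I would prove directly from the definitions. The lower bound I would prove by contradiction using a conic theorem of the alternative, with the width of $K_d$ converting a norm into a linear functional; that second part is the main obstacle.

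\textbf{Upper bound.} Fix $s$ feasible for the problem defining $\bar r_\delta$ with $B(s,\bar r_\delta)\subseteq K_d$, and $x$ feasible for the problem defining $\bar R_\eps$ with $\|x\|=\bar R_\eps$. We may assume $x\neq 0$, since otherwise the bound is trivial. By \eqref{eq by=-qs}, for $x\in V_p$ and $s\in V_d$ we have $x^\top s=c^\top x-(-q^\top s+q_0)$. Hence $x^\top s\le (f^\star+\eps)-(f^\star-\delta)=\eps+\delta$. Since $s-\bar r_\delta\, x/\|x\|\in K_d$ and $x\in K_p$, we get $x^\top s-\bar r_\delta\|x\|\ge 0$. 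Combining the two gives $\bar r_\delta\bar R_\eps\le \delta+\eps$.

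\textbf{Lower bound, setup.} Set $\bar r:=\width_{K_d}\min\{\delta,\eps\}/\bar R_\eps$; finiteness of $\bar R_\eps$ comes from boundedness of primal level sets under Assumption~\ref{assump:striclyfeasible}. It suffices to produce $s\in V_d$ with $-q^\top s+q_0\ge f^\star-\delta$ and $B(s,\bar r)\subseteq K_d$. The key reformulation is that $B(s,\rho)\subseteq K_d$ iff $x^\top s\ge \rho\|x\|$ for all $x\in K_p$. Suppose no such $s$ exists for $\rho=\bar r$. I would use a unit vector $u$ and the radius $\width_{K_d}$ with $B(u,\width_{K_d})\subseteq K_d$ to replace the nonlinear requirement by the stronger linear one $s-\rho\, u/\width_{K_d}\in K_d$. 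This gives the conic system
\[
s\in V_d,\qquad -q^\top s+q_0\ge f^\star-\delta,\qquad s-(\bar r/\width_{K_d})\,u\in K_d .
\]
The system is infeasible because any solution would be a valid $s$, by the width property.

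\textbf{Lower bound, contradiction.} By a conic Farkas lemma, valid because Slater points exist (Assumption~\ref{assump:striclyfeasible}), infeasibility yields $\bar x\in K_p$ and $\theta\ge 0$ with $A\bar x=\theta b$ and
\[
c^\top\bar x+\theta(\delta-f^\star)+(\bar r/\width_{K_d})\,u^\top\bar x<0 .
\]
I would then split into cases.
\begin{itemize}
\item If $\theta=0$, then $\bar x$ is a recession direction of the primal feasible region with $c^\top\bar x<0$. This contradicts boundedness of the primal level sets.
\item If $\theta>0$, rescale to a primal feasible $x=\bar x/\theta$. Then $c^\top x< f^\star-\delta-(\bar r/\width_{K_d})\,u^\top x$, and $u^\top x\ge \width_{K_d}\|x\|$ because $u-\width_{K_d}x/\|x\|\in K_d$. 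This forces $\bar r\|x\|<f^\star-c^\top x-\delta\le -\delta$, which is impossible since $c^\top x\ge f^\star$ for primal feasible $x$.
\end{itemize}

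\textbf{Main obstacle.} The delicate point is the exact form of the alternative system and the sign bookkeeping. The case analysis must genuinely use $\bar R_\eps$ and $\min\{\delta,\eps\}$, and the sketch above does not yet do so. To bring them in, I expect to take a convex combination of $x$ with a primal optimal point so that the objective stays below $f^\star+\eps$ while the norm exceeds $\bar R_\eps$. If this route becomes unwieldy, I would fall back on citing Theorem~3.2 of \cite{freund2003primal} for the lower bound.
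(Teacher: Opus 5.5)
\textbf{Upper bound.} Your upper-bound argument is correct, and it is the standard one. The paper itself gives no proof of this lemma; it simply imports Theorem~3.2 of \cite{freund2003primal}.

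\textbf{The gap in the lower bound.} The lower bound fails at the theorem-of-the-alternative step because of a sign error. Write $s=c-A^\top y$ and $t:=\bar r/\width_{K_d}$. Your system is then $c-tu-A^\top y\in K_d$, $b^\top y\ge f^\star-\delta$. The weak-duality computation $0\le \bar x^\top(c-tu-A^\top y)\le c^\top\bar x-t\,u^\top\bar x-\theta(f^\star-\delta)$ shows that the correct certificate is
\[
c^\top\bar x-t\,u^\top\bar x+\theta(\delta-f^\star)<0 ,
\]
with a \emph{minus} sign on the $u$-term. Your inequality with $+t\,u^\top\bar x$ is strictly stronger, since $u^\top\bar x\ge 0$, and infeasibility does not imply it.

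You can see this must be wrong: your case analysis reaches a contradiction without ever using the size of $\bar r$. It would therefore prove the system feasible for every $\bar r>0$, i.e.\ $\bar r_\delta=+\infty$, contradicting the upper bound you just proved. That sign error, not a missing refinement, is why $\bar R_\eps$ and $\min\{\delta,\eps\}$ never enter. With the correct sign, neither case closes as written. For $\theta>0$ you only get $c^\top x<f^\star-\delta+t\,u^\top x$, which is compatible with $c^\top x\ge f^\star$. For $\theta=0$ you get $c^\top\bar x<t\,u^\top\bar x$, not $c^\top\bar x<0$.

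\textbf{How to repair it.} The convex-combination idea in your last paragraph is exactly what fixes this.
\begin{itemize}
\item Take $t:=\min\{\delta,\eps\}/\bar R_\eps$; the case $\min\{\delta,\eps\}=0$ is trivial.
\item The perturbed primal $\min\{(c-tu)^\top x: Ax=b,\ x\in K_p\}$ keeps the original primal Slater point. Conic strong duality then shows that infeasibility of your system yields a primal feasible $x$ (i.e.\ $\theta=1$) with $t\,u^\top x>c^\top x-f^\star+\delta$. This also disposes of the $\theta=0$ case.
\item If $c^\top x\le f^\star+\eps$, then $\|x\|\ge u^\top x>\delta/t\ge\bar R_\eps$, contradicting the definition of $\bar R_\eps$.
\item Otherwise, let $x^\star$ be primal optimal (it exists under Assumption~\ref{assump:striclyfeasible}). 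Set $\lambda:=\eps/(c^\top x-f^\star)\in(0,1)$ and $x_\lambda:=(1-\lambda)x^\star+\lambda x$, so that $c^\top x_\lambda=f^\star+\eps$. Using $\|u\|=1$ and $u^\top x^\star\ge0$,
\[
\|x_\lambda\|\ge u^\top x_\lambda\ge\lambda\,u^\top x>\lambda(c^\top x-f^\star+\delta)/t\ge\eps/t\ge\bar R_\eps ,
\]
again a contradiction.
\end{itemize}
The two cases use $\delta/t\ge\bar R_\eps$ and $\eps/t\ge\bar R_\eps$ respectively, which is precisely where $\min\{\delta,\eps\}$ comes from.
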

\noindent Lemma \ref{lm:primal-dual-sublevelsets} leads directly to the following corollary regarding the conic radius $r_\delta$ of the level set $\calW_\delta$:
\begin{corollary}\label{cor: r R in symmetric form}
	For any $\eps\ge0$ and $\delta>0$, it holds that
	\begin{equation}\label{eq:r R in symmetric form}
		\width_K \cdot \min\{\delta,\eps\} \le r_\delta \cdot \left(\max_{w\in\calW_\eps}\|w\|\right) \le \delta + \eps \ .
	\end{equation}
\end{corollary}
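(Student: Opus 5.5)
The plan is to reduce Corollary~\ref{cor: r R in symmetric form} to Lemma~\ref{lm:primal-dual-sublevelsets}, applied twice: once in the stated primal/dual roles, and once with those roles swapped by the primal--dual symmetry of the subspace form \eqref{pro:preview_primal_subspace}--\eqref{pro:preview_dual_subspace}. We then recombine the primal and dual pieces. The whole argument rests on the product structure $\calW_\delta$, $K=K_p\times K_d$, so we first record how the primal-dual quantities decompose.

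\textbf{Step 1: decompose the level sets.} Write $f^\star$ for the common optimal value; with $c\in\mathcal{L}$ we have $q_0=0$. For feasible $w=(x,s)$ the gap is
\[
\gap(w)=(c^\top x-f^\star)+(f^\star+q^\top s),
\]
a sum of a primal and a dual optimality gap, each nonnegative. Hence
\[
\calW_\eps\subseteq \{x\in V_p\cap K_p:\ c^\top x\le f^\star+\eps\}\times\{s\in V_d\cap K_d:\ -q^\top s\ge f^\star-\eps\}.
\]
Conversely, the product of the primal $\eps_1$-level set and the dual $\eps_2$-level set lies in $\calW_{\eps_1+\eps_2}$. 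For the radius, $B((x,s),r)\subseteq K$ holds iff $B(x,r)\subseteq K_p$ and $B(s,r)\subseteq K_d$, so the conic radius at $(x,s)$ is the minimum of the two individual radii.

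\textbf{Step 2: upper bound.} Let $w_\delta=(x_\delta,s_\delta)$ be a conic center, and let $\delta_p,\delta_d\ge0$ be its primal and dual gaps, with $\delta_p+\delta_d\le\delta$. Then $s_\delta$ is feasible for the problem defining $\bar r_{\delta_d}$, so $r_\delta\le \bar r_{\delta_d}$. Symmetrically, let $\bar r^{p}_{\delta_p}$ be the analogous primal radius and $\bar R^{d}_\eps$ the dual norm bound; then $r_\delta\le\bar r^{p}_{\delta_p}$. Take $w=(x,s)\in\calW_\eps$ attaining $\max_{w\in\calW_\eps}\|w\|$. Then $\|x\|\le\bar R_\eps$ and $\|s\|\le\bar R^d_\eps$. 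Applying the lemma and its dual version gives
\[
r_\delta^2\|w\|^2\le r_\delta^2\|x\|^2+r_\delta^2\|s\|^2\le(\delta_d+\eps)^2+(\delta_p+\eps)^2 .
\]
This only yields a bound like $\sqrt2(\delta+\eps)$. To get the constant $1$, I would instead redo the short proof of the upper bound of Theorem 3.2 of \cite{freund2003primal} directly in product form. Set $u:=w/\|w\|$. Since $B(w_\delta,r_\delta)\subseteq K=K^*$ up to the pairing $(x,s)\mapsto(s,x)$, which is self-dual for $K_p\times K_d$, the point $w_\delta-r_\delta\,\tilde u$ is in $K$, where $\tilde u$ is $u$ with its blocks swapped. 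Pairing with $w\in K$ gives
\[
0\le x^\top s_\delta+x_\delta^\top s-r_\delta\|w\|.
\]
Because $w,w_\delta\in V$, we have $x^\top s_\delta+x_\delta^\top s=\gap(w)+\gap(w_\delta)-x^\top s+\ldots$; a cleaner way to see it is via the orthogonality identity $(x-x_\delta)^\top(s-s_\delta)=0$, which gives $x^\top s_\delta+x_\delta^\top s=x^\top s+x_\delta^\top s_\delta=\gap(w)+\gap(w_\delta)\le\eps+\delta$. This yields the upper bound exactly.

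\textbf{Step 3: lower bound.} It suffices to exhibit some $w\in\calW_\eps$ with $r_\delta\|w\|\ge\width_K\min\{\delta,\eps\}$. This is the main obstacle, since we need a single point with both a large norm and a large radius certificate. I would follow the construction of \cite{freund2003primal}. Take a cone point $v$ with $B(v,\width_K\|v\|)\subseteq K$ (the width-attaining direction, $\|v\|=1$), and consider the optimization problem maximizing $\theta$ such that $w^\star+\theta\,\cdot$(a direction) stays in $\calW_\eps$. Its conic dual certificate produces, via the same pairing identity as in Step 2, a point of $\calW_\delta$ whose radius is at least $\width_K\min\{\delta,\eps\}/\|w\|$. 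Concretely, for the farthest point $w\in\calW_\eps$ I would use separation between $\calW_\eps$ and rays to build the center. Failing that, one can invoke Lemma~\ref{lm:primal-dual-sublevelsets} blockwise: choose $\eps_1,\eps_2$ and $\delta_1,\delta_2$ splitting $\eps$ and $\delta$, and use $\width_K\le\min\{\width_{K_p},\width_{K_d}\}$. Then combine the resulting primal point of large norm with a dual point of large radius, and vice versa, taking the better of the two pairings to absorb the min. I expect the constant bookkeeping here to be the delicate part.
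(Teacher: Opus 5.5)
Your Step 2 is correct and self-contained. $B(w_\delta,r_\delta)\subseteq K$ gives $w_\delta-r_\delta\tilde u\in K$. The swapped pairing $a_1^\top b_2+a_2^\top b_1\ge 0$ for $a,b\in K_p\times K_d$, valid because $K_d=K_p^*$, gives $0\le x^\top s_\delta+x_\delta^\top s-r_\delta\|w\|$. Finally, $(x-x_\delta)^\top(s-s_\delta)=0$ turns this into $r_\delta\|w\|\le\gap(w)+\gap(w_\delta)\le\eps+\delta$.

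The gap is Step 3: the lower bound is never proved. Your first sketch (width-attaining $v$, ``maximize $\theta$'', ``conic dual certificate'') does not specify the optimization problem. It also does not show why its value is at least $\min\{\delta,\eps\}/\max_{w\in\calW_\eps}\|w\|$, which is exactly the nontrivial half of Freund's theorem; you only assert it.

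The blockwise fallback, as you describe it, cannot deliver the constant $\width_K$. Suppose you assemble a point of $\calW_\delta$ from a primal point with primal gap $\delta_1$ and a dual point with dual gap $\delta_2$, where $\delta_1+\delta_2=\delta$. Its conic radius is the \emph{smaller} of the two block radii, so which pairing you may use is forced, not chosen. With $a:=\width_{K_p}$, $b:=\width_{K_d}$ and $\eps\ge\delta$, the resulting guarantee is $\min\{a\delta_1,b\delta_2\}\le\frac{ab}{a+b}\,\delta$. But $\width_K=\frac{ab}{\sqrt{a^2+b^2}}$ is strictly larger; when $a=b$, as for LP, you get $\tfrac{a}{2}\delta$ instead of $\tfrac{a}{\sqrt2}\delta$. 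So the issue is not bookkeeping: the argument as outlined proves a weaker inequality. It can be repaired by choosing the split so that the two block radii coincide, and combining the norm bounds through $\|w\|^2=\|x\|^2+\|s\|^2$, since $\min_{\delta_1+\delta_2=\delta}(a^2\delta_1^2+b^2\delta_2^2)=\width_K^2\delta^2$. That repair needs the role-swapped version of the lemma and a continuity argument for the split, and you supply neither.

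The missing idea is one your Step 2 already uses implicitly. With $c\in\mathcal{L}$ and $q\in\mathcal{L}^{\bot}$, the problem $\min\{(c,q)^\top w:\ w\in V,\ w\in K\}$ is itself a CLP in symmetric subspace form. Its subspace is $\mathcal{L}\times\mathcal{L}^{\bot}$, its objective equals $\gap(w)$, and its optimal value is $0$. Its conic dual is posed over $(c,q)+(\mathcal{L}^{\bot}\times\mathcal{L})=V_d\times V_p$ with cone $K^*=K_d\times K_p$ and objective $-\gap$. Up to the swap $(x,s)\mapsto(s,x)$, it is the same problem.

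For this lifted pair, Freund's $\bar R_\eps$ is exactly $\max_{w\in\calW_\eps}\|w\|$ and Freund's $\bar r_\delta$ is exactly $r_\delta$. Also $\width_{K^*}=\width_K$, because $K^*$ is a coordinate permutation of $K$, and the required Slater condition is just Assumption~\ref{assump:striclyfeasible}. A single application of Lemma~\ref{lm:primal-dual-sublevelsets} to the lifted pair then gives both inequalities with the exact constants, with no splitting of $\delta$ or $\eps$. This is the paper's proof.
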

\proof{Proof.}
	The idea of the proof is to relate $r_\delta$ and $\max_{w\in\calW_\eps}\|w\|$ to the quantities $\bar{r}_\delta$ and $\bar{R}_\eps$ defined in \eqref{eq:def_R_r} for a certain pair of primal and dual problems. Since replacing the objective representative $c$ by another vector in $c+\mathcal{L}^{\bot}$ does not change the level sets $\calW_\delta$ or $\calW_\eps$, without loss of generality we presume that $c \in \mathcal{L}$. Recall here that $\mathcal{L}$ and $\mathcal{L}^{\bot}$ are the linear subspaces associated with $V_p$ and $V_d$. We can combine \eqref{pro: general primal clp} and \eqref{pro: general dual clp on s} into a single conic linear optimization problem of the following form:
	\begin{equation}\label{pro:primal-dual LP problem}
		\begin{aligned}
			\min_{w\in\mathbb{R}^{2n}} \  & z_0^\top w \quad
			\text{s.t.} \                 & w \in V = (\mathcal{L}\times\mathcal{L}^{\bot}) + w_0, \ w \in K = K_p \times K_d \ , 
		\end{aligned}
	\end{equation}
where $z_0 := (c,q) \in \mathcal{L}\times\mathcal{L}^{\bot}$ and $w_0 := (q,c)$, yielding $V = (\mathcal{L}\times\mathcal{L}^{\bot}) + w_0 = (q+\mathcal{L})\times(c+\mathcal{L}^{\bot}) = V_p \times V_d$.  Using a similar approach for deriving the dual problem with that in Section \ref{sec:get_general_clp_dual}, it is known that its symmetric dual problem (see also Section 3.1 of \cite{renegar2001mathematical} for details) is the following problem:
	\begin{equation}\label{pro:dual of the primal-dual LP problem}
		\begin{aligned}
			\min_{z\in\mathbb{R}^{2n}} \  & w_0^\top z \quad
			\text{s.t.} \                 & z \in \tilde{V} := (\mathcal{L}^{\bot}\times\mathcal{L}) + z_0 \ , \ z \in K^* = K_d \times K_p \ . 
		\end{aligned}
	\end{equation}
	Note that $\mathcal{L}$ and $\mathcal{L}^{\bot}$ are orthogonal complementary subspaces of $\mathbb{R}^n$, and so $\mathcal{L}^{\bot}\times\mathcal{L}$ is the orthogonal complement of $\mathcal{L}\times\mathcal{L}^{\bot}$ in $\mathbb{R}^{2n}$. Moreover, because $K_p$ and $K_d$ are dual cones of each other, then $K^* = K_p^* \times K_d^* = K_d \times K_p$. Therefore, now we can see the feasible sets of \eqref{pro:primal-dual LP problem} and \eqref{pro:dual of the primal-dual LP problem} are related by simply exchanging the order of the $n$-dimensional variable components, namely from $w=(x,s)$ to $z=(s,x)$.  And also the objective vectors $z_0$ and $w_0$ are similarly related by exchanging the order of $c$ and $q$.

It thus follows that the quantity $\bar{r}_\delta$ associated with \eqref{pro:primal-dual LP problem} is identical to the quantity $r_\delta$ of $\calW_\delta$, and quantity $\bar{R}_\eps$ associated with \eqref{pro:dual of the primal-dual LP problem} is identical to $\max_{w\in\calW_\eps}\|w\|$. Furthermore, because $K = K_p\times K_d$ and $K^* = K_d \times K_p$, then $\tau_K = \tau_{K^*}$.
	Therefore, directly applying Lemma \ref{lm:primal-dual-sublevelsets} on \eqref{pro:primal-dual LP problem} and  \eqref{pro:dual of the primal-dual LP problem}  yields \eqref{eq:r R in symmetric form}.
\Halmos\endproof

Furthermore, the following monotonicity result is presented in Remark 2.1 of \cite{freund2003primal}.
\begin{lemma}{\bf (Monotonicity of $\bar{r}_\delta/\delta$)}\label{lm:monotonicity of delta r}
	For any $\delta' > \delta >0$, it holds that $\frac{\bar{r}_{\delta} }{\delta}\ge \frac{\bar{r}_{\delta'}}{\delta'}$.
\end{lemma}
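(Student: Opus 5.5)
The natural route is a homothety argument centered at an optimal solution, using convexity of the feasible set and linearity of the gap. Let $s^\star$ be an optimal solution of the dual problem \eqref{pro: general dual clp on s}, which exists under Assumption \ref{assump:striclyfeasible}; it attains dual objective value $f^\star$. Fix $\delta'>\delta>0$ and let $\bar s$ be a maximizer in the definition of $\bar r_{\delta'}$ in \eqref{eq:def_R_r}. Thus $\bar s\in V_d\cap K_d$, $-q^\top\bar s+q_0\ge f^\star-\delta'$, and $B(\bar s,\bar r_{\delta'})\subseteq K_d$. If no maximizer exists, I would work with a near-maximizer and pass to the limit at the end.

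Set $\lambda:=\delta/\delta'\in(0,1)$ and $\tilde s:=\lambda\bar s+(1-\lambda)s^\star$. Feasibility is immediate: $V_d$ is affine, so $\tilde s\in V_d$, and $K_d$ is convex, so $\tilde s\in K_d$. Because the objective is affine,
\[
-q^\top\tilde s+q_0=\lambda(-q^\top\bar s+q_0)+(1-\lambda)f^\star\ge f^\star-\lambda\delta'=f^\star-\delta ,
\]
so $\tilde s$ is feasible for the problem defining $\bar r_\delta$.

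The remaining step is the ball containment, which I expect to be the only point needing care. Since $K_d$ is a convex cone and $s^\star\in K_d$, we get $\lambda B(\bar s,\bar r_{\delta'})+(1-\lambda)s^\star\subseteq K_d$. The set on the left is exactly $B(\tilde s,\lambda\bar r_{\delta'})$. Hence $\bar r_\delta\ge\lambda\bar r_{\delta'}=\tfrac{\delta}{\delta'}\bar r_{\delta'}$, and dividing by $\delta$ gives the claimed inequality $\bar r_\delta/\delta\ge\bar r_{\delta'}/\delta'$.

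Finally, I would note that the same argument applies verbatim to the symmetric primal-dual form. Take the center to be a point of $\calW^\star$ and the maximizer to be a point of $\calW_{\delta'}$. This yields monotonicity of $r_\delta/\delta$, which is what the proof of Lemma \ref{lm r_delta over delta} presumably needs.
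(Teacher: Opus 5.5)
Your proof is correct and complete. With $\lambda=\delta/\delta'$, the point $\tilde s=\lambda\bar s+(1-\lambda)s^\star$ is feasible for the $\delta$-problem, and $B(\tilde s,\lambda\bar r_{\delta'})=\lambda B(\bar s,\bar r_{\delta'})+(1-\lambda)s^\star\subseteq K_d$. Only convexity of $K_d$ is used there, not the cone property, and the step amounts to concavity of $\delta\mapsto\bar r_\delta$ together with $\bar r_0\ge 0$.

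The paper gives no argument of its own; it simply cites Remark 2.1 of \cite{freund2003primal}. Your closing remark is well placed: the proof of Lemma~\ref{lm r_delta over delta} actually needs monotonicity of $r_\delta/\delta$ for the symmetric level sets $\calW_\delta$, obtained through the identification in Corollary~\ref{cor: r R in symmetric form}. Your homothety toward a point of $\calW^\star$ gives this directly.
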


Using Corollary \ref{cor: r R in symmetric form} and Lemma \ref{lm:monotonicity of delta r}, we now prove Lemma \ref{lm r_delta over delta}.
\proof{Proof of Lemma \ref{lm r_delta over delta}.}
	We first prove the second inequality of \eqref{eq of lm r calW small}.
	According to the second inequality of \eqref{eq:r R in symmetric form} with $\eps = 0$, it holds that $r_\delta \cdot \max_{w \in \calW^\star}\|w\| \le \delta$ for any $\delta > 0$, which directly implies the second inequality of \eqref{eq of lm r calW small}.

As for the first inequality of \eqref{eq of lm r calW small}, the first inequality of \eqref{eq:r R in symmetric form} yields $r_\delta \cdot \max_{w \in \calW_\eps}\|w\| \ge \width_K \cdot \min\{\delta,\eps\}$ for any $\delta ,\eps \in \mathbb{R}_{++}$. Taking $\eps = \delta$ yields $\frac{r_\delta}{\delta}\ge \frac{\width_K}{\max_{w \in \calW_\delta}\|w\|} $ for any $\delta > 0$. And using Lemma \ref{lm:monotonicity of delta r} we have $\sup_{\delta>0}\frac{r_\delta}{\delta} = \lim_{\delta\searrow 0}\frac{r_\delta}{\delta} \ge \lim_{\delta \searrow 0} \frac{\width_K}{\max_{w \in \calW_\delta}\|w\|} = \frac{\width_K}{\max_{w\in \calW^\star}\|w\|}$, which proves the first inequality of \eqref{eq of lm r calW small}.
\Halmos\endproof

\end{APPENDICES}

\ACKNOWLEDGMENT{We thank Haihao Lu, Kim-Chuan Toh, and Haoyue Wang for useful discussions and feedback on this work. Part of the work by Z. Xiong was performed at the Massachusetts Institute of Technology supported by AFOSR Grant No. FA9550-22-1-0356. Another part of the work by Z. Xiong was performed at Georgia Institute of Technology supported by ONR Award \#N00014-25-1-2088. The work by R. Freund was supported by AFOSR Grant No. FA9550-22-1-0356. The authors declare that they have no relevant financial or non-financial interests to disclose.}



\bibliographystyle{informs2014} 
\bibliography{reference} 


\end{document}